\documentclass[11pt,amssymb]{amsart}
\usepackage{amssymb,hyperref}
\usepackage[mathscr]{eucal}
\usepackage[all,cmtip]{xy}
\usepackage{amscd}
\usepackage{enumitem}
\usepackage{tikz-cd}

\newcommand{\im}{{\rm im}\:}

\newtheorem{thm}{Theorem}[section]
\newtheorem{lemma}[thm]{Lemma}
\newtheorem{prop}[thm]{Proposition}
\newtheorem{cor}[thm]{Corollary}

\theoremstyle{definition}

\newtheorem{example}[thm]{Example}
\newtheorem{remark}[thm]{Remark}

.240pk scaled 1200 .240pk
\usepackage[all,cmtip]{xy}
\usepackage[T2A,T1]{fontenc}
\DeclareSymbolFont{cyrillic}{T2A}{cmr}{m}{n}
\DeclareMathSymbol{\Sha}{\mathalpha}{cyrillic}{216}

\begin{document}

\title{Products of trees and  ${\rm PGL}_2$-torsors over the punctured affine line}

\author[A.S.~Rapinchuk]{Andrei S. Rapinchuk}

\address{Department of Mathematics, University of Virginia, Charlottesville, VA 22904-4137, USA}

\email{asr3x@virginia.edu}

\author[I.A.~Rapinchuk]{Igor A. Rapinchuk}

\address{Department of Mathematics, Michigan State University, East Lansing, MI 48824, USA}

\email{rapinchu@msu.edu}

\author[A.~Roy]{Avinash Roy}

\address{Department of Mathematics, Michigan State University, East Lansing, MI 48824, USA}

\email{royavina@msu.edu}

\begin{abstract}

The goal of this article is to present a computation of the Galois cohomology of the group $G = \mathrm{PGL}_2$ over rings of Laurent polynomials. This computation recovers the main result of \cite{CGP} in this case by a new method, based on the analysis of actions on appropriate geometric objects (products of trees and, in general,  of affine buildings), which was already used in \cite{ARR} to give a new proof of the theorem of Raghunathan-Ramanathan \cite{RR} concerning Galois cohomology over polynomial rings. Going beyond Galois cohomology, this method also enables one to determine the finite subgroups in the group of points over relevant polynomial rings. In view of these and other potential applications of the method in different situations (in particular, in the study of algebraic groups over the coordinate rings of general affine curves), we have attempted to make our exposition largely self-contained and accessible to broad mathematical audience.
\end{abstract}

\vskip5mm

\maketitle

\section{Introduction and statement of results}

The theory of Bruhat-Tits buildings, initially introduced in
\cite{BT72}, \cite{BT84}, and \cite{BT87}, and recently re-developed in \cite{KaPr} from a different perspective, provides powerful techniques for establishing various general results about
the Galois cohomology of reductive algebraic groups over {\it local} fields. Broadly speaking, these techniques rely on the analysis of actions of Galois groups, constructed using Galois cocycles, on appropriate buildings, together with applications of the Bruhat-Tits Fixed Point Theorem.
Recently, it was shown in \cite{ARR} that similar considerations can be used in the {\it global}  situation to study the Galois cohomology of reductive algebraic groups over polynomial rings in one variable, and, in particular, to offer a short conceptual proof of a theorem concerning torsors over the affine line due to Raghunathan-Ramanathan \cite{RR}. In addition, the same approach was used to give
a description of finite subgroups in the groups of points of reductive groups over polynomial rings. One can expect that these considerations
will be applicable in a significantly more general situation, including in the study of reductive groups over the coordinate rings of various open subsets of the projective line as well as of curves of higher genus.

The purpose of this article is to give a practically self-contained exposition of the method in order to facilitate further progress in this direction. More specifically, we will demonstrate how these techniques can be used to recover for $G = \mathrm{PGL}_2$ the result of \cite{CGP} concerning torsors under reductive groups over the affine line with one puncture. In order to keep our exposition accessible, we will not use the terminology of algebraic geometry and will instead formulate our results in terms of Galois cohomology over rings of Laurent polynomials --- the interested reader is referred to \cite[\S 5.1]{ARR} for a discussion of the connections between these results on Galois cohomology
and torsors.

The method we present here relies on constructing a nice fundamental domain for an action of the group of points over a relevant ring (which, in the current paper is the ring of Laurent polynomials) on a relevant geometric object (which is a product of two Bruhat-Tits trees in the present case), and then determining the stabilizers of points in this fundamental domain
--- the underlying formalism is described in Lemma \ref{L:cohom}.
It should be pointed out that, quite generally,
the construction of a fundamental domain is the subject matter of {\it reduction theory} (see \cite[Chapter 4]{PlRR} for an account of reduction theory for arithmetic groups). Major contributions to reduction theory over global function fields were made by G.~Harder (cf.
\cite{H-Curve}, \cite{H-Mink}); for some
current efforts in this direction, the reader may want to consult \cite{ACBLLA}, \cite{BL1}, and \cite{BL2}, and the references therein. In the present paper, we
avoid the technical results of reduction theory and instead construct a suitable fundamental domain explicitly using the realization of Bruhat-Tits trees in terms of lattices in 2-dimensional vector spaces, which goes back to Serre \cite[\S 2.1]{Serre-Trees} --- see \S \ref{S:BT}.

Let us now
turn to the precise statements of the results we prove. Let $G = \mathrm{PGL}_2$ over a field $k$ of characteristic $\neq 2$. For a Galois extension $\ell/k$ with Galois group $\mathcal{G} = \mathrm{Gal}(\ell/k)$, we set $\Gamma_{\ell}$ to be the group $G(\ell[x , x^{-1}])$ of points of $G$ over the ring $\ell[x , x^{-1}]$ of Laurent polynomials with the standard $\mathcal{G}$-action.  Our goal is to describe the Galois cohomology set $H^1(\mathcal{G} , \Gamma_{\ell})$, which we will often denote by
$H^1(\ell/k , \Gamma_{\ell})$. For $\ell/k$ finite, we consider the map
$$
\nu_{\ell} \colon H^1(\ell/k , \Gamma_{\ell}) \longrightarrow H^1(\ell((x))/k((x)) , G),
$$
where $k((x))$ (resp., $\ell((x))$) is the field of formal Laurent series over $k$ (resp., over $\ell$), induced by the  natural embedding $\Gamma_{\ell} \hookrightarrow G(\ell((x)))$. Taking the direct limit of these maps over all Galois extensions $\ell$ of $k$ contained in a fixed algebraic closure $k^{\mathrm{sep}}$, we obtain a map
$$
\nu_{k^{\mathrm{sep}}} \colon H^1(k^{\mathrm{sep}}/k , \Gamma_{k^{\mathrm{sep}}}) \longrightarrow H^1(k((x))^{\mathrm{ur}}/k((x)) , G),
$$
where $k((x))^{\mathrm{ur}} := \bigcup_{\ell} \ell((x))$ is the maximal unramified extension of $k((x))$.

\begin{thm}\label{T:Main-sep}
The map $\nu_{k^{\mathrm{sep}}}$ is a bijection.
\end{thm}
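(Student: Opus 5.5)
The plan is to let $\Gamma_\ell = G(\ell[x,x^{-1}])$ act on the product $X = T_0 \times T_\infty$ of the Bruhat-Tits trees of $\mathrm{PGL}_2$ over $\ell((x))$ and $\ell((x^{-1}))$. I would compute $H^1(\ell/k,\Gamma_\ell)$ through the formalism of Lemma \ref{L:cohom}. That lemma needs a fundamental domain $D\subset X$ for the $\Gamma_\ell$-action. The domain should be $\mathcal{G}$-stable and defined over $k$, so that the $\mathcal{G}$-action on $X$ (semilinear, through the coefficient action) is compatible with it.

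\textbf{Step 1: the fundamental domain.} I realize the vertices of $T_0$ and $T_\infty$ as homothety classes of lattices in $\ell((x))^2$ and $\ell((x^{-1}))^2$, following Serre. I then show that $D$ can be taken to be the product of two standard rays, namely the classes $[\mathcal{O}\oplus x^{n}\mathcal{O}]$ at $0$ and the corresponding classes at $\infty$. Modulo the diagonal torus $\{\mathrm{diag}(x^a,1)\}$, this should collapse to essentially a quadrant, or a union of strips. The proof adapts the classical argument that $\mathrm{GL}_2(\ell[x])$ has a sector as fundamental domain on its tree, via elementary reduction of vectors of Laurent polynomials by degree considerations at both ends. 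I expect this to be the main obstacle: one must use the Euclidean-type reduction in $\ell[x,x^{-1}]$ simultaneously for the valuations at $0$ and at $\infty$, and control how the $\Gamma_\ell$-orbits of pairs of vertices meet $D$. The first thing I would try is the vector-bundle interpretation: a pair of lattices glued with $\ell[x,x^{-1}]^2$ gives a rank-2 bundle on $\mathbb{P}^1$, and Grothendieck's splitting $\mathcal{O}(a)\oplus\mathcal{O}(b)$ provides the normal form.

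\textbf{Step 2: the stabilizers.} I compute the stabilizers in $\Gamma_\ell$ of the points of $D$. Vertex stabilizers should be $\mathcal{G}$-equivariantly of the form $\mathrm{PGL}_2(\ell)$ at the origin, and extensions of the torus by unipotent pieces (polynomials of bounded degree) elsewhere. The torus part must include the elements $\mathrm{diag}(x,1)$ and the Weyl-type element $\begin{pmatrix}0&1\\ x&0\end{pmatrix}$, which permutes the ends. Lemma \ref{L:cohom} then reduces $H^1(\ell/k,\Gamma_\ell)$ to the union of the $H^1$'s of these stabilizers, modulo identifications along the edges of $D$. Unipotent radicals contribute nothing, because their $H^1$ vanishes by the additive Hilbert 90. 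So the answer is governed by $H^1(\ell/k,\mathrm{PGL}_2(\ell))$ together with $H^1$ of normalizers of tori containing $x$-twists.

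\textbf{Step 3: comparison with $k((x))$.} By Bruhat-Tits theory, equivalently Hensel's lemma plus the fixed point theorem on $T_0$, $H^1(\ell((x))/k((x)),G)$ is described by the same data. The possible fixed points are the vertex, giving $H^1(k,\mathrm{PGL}_2)$ (quaternion algebras over $k$), and the midpoint of an edge, giving the classes of the form $(a,x)$ with $a\in k^\times/k^{\times2}$, as expected in residue characteristic $\neq 2$. For surjectivity, every unramified class of $k((x))$ is either defined over $k$ or is a cyclic algebra $(\ell_a/k, x)$, and the corresponding cocycle has values in $\Gamma_\ell$ via the element $\begin{pmatrix}0&1\\ x&0\end{pmatrix}$. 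For injectivity, two cocycles with values in stabilizers that become equivalent in $G(\ell((x)))$ are compared through their fixed points in $T_0$. The extra tree $T_\infty$ introduces no new classes, because the fundamental domain identifies the relevant stabilizers. The final step is passing to the direct limit over $\ell$, which preserves bijectivity.
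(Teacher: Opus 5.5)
Your overall architecture (the action of $\Gamma_\ell$ on $\mathcal{T}_\ell\times\mathcal{T}^-_\ell$, Lemma \ref{L:cohom}, the stabilizers, comparison with the local analysis) is the one the paper uses for Theorem \ref{T:Main-quadratic}. However, the steps that carry the content are either wrong or missing.

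\textbf{Step 1.} A normal form for pairs of \emph{vertices} (your Grothendieck-splitting idea) cannot by itself certify a weak fundamental domain. Lemma \ref{L:cohom} needs $\Gamma_\ell\cdot D=X$ for \emph{all} points, and the decisive fixed points are not vertices. For the cocycle $\sigma\mapsto[g]$, $g=\bigl(\begin{smallmatrix}0&1\\ x&0\end{smallmatrix}\bigr)$, the twisted $\mathcal{G}$-action reverses vertex types in both trees, since $\det g$ has odd valuation at $x$ and at $x^{-1}$. So its fixed points are centers of squares. This matters concretely. The product of the rays $\{P_n\}_{n\ge0}$ and $\{P^-_m\}_{m\ge0}$ contains a representative of every orbit of vertex pairs, namely $(P_0,P^-_j)$ for $j\ge0$. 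Yet it misses the orbit of $(M,M^-_{-1})$. Indeed, a conjugate of $[g]$ fixing a point of this product fixes no vertex of either tree, so it must fix some $(M_n,M^-_m)$ with $n,m\ge0$ and swap both edges. Moving this point by $[t]^{-n}$ to $(M,M^-_{m+n})$ contradicts (\ref{E:Inters001}), since $m+n\neq-1$. If the ray at $\infty$ instead points toward $P^-_{-1},P^-_{-2},\dots$, the product does work: every point of $\mathcal{E}\times\mathcal{A}^-$ lies in it or is moved into it by $[g]$. Your sketch does not distinguish the two cases. The paper obtains $\Phi=\mathcal{E}\times\mathcal{A}^-$ at the level of points from $\Gamma\mathcal{E}=\mathcal{T}$ and the Birkhoff decomposition $\Gamma=BNB^-$. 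Also, $\mathrm{diag}(x,1)$ translates both apartments and lies in no point stabilizer. The only stabilizer involving $x$ is $T(\ell)\rtimes\langle[g]\rangle$, at $(M,M^-_{-1})$.

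\textbf{Step 3.} This step misdescribes the local target. For quadratic $\ell$, the ramified classes split by $\ell((x))$ are $(\ell((x))/k((x)),bx)$ with $b$ running over $k^\times/N_{\ell/k}(\ell^\times)$, not the single algebra $(a,x)$. For example, over $k=\mathbb{Q}_3$ the algebras $(2,x)$ and $(2,3x)$ are distinct and both ramified. So the single cocycle $\sigma\mapsto[g]$ does not give surjectivity. You need the cocycles $\sigma\mapsto\bigl[\bigl(\begin{smallmatrix}0&b\\ x&0\end{smallmatrix}\bigr)\bigr]$, i.e.\ the fact that the nontrivial part of $H^1(\mathcal{G},T(\ell)\rtimes\langle[g]\rangle)$ is parametrized by $H^1(\mathcal{G},{}_{\varepsilon}T(\ell))\cong k^\times/N_{\ell/k}(\ell^\times)$.

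\textbf{Injectivity.} Lemma \ref{L:cohom} only gives surjectivity of $H^1(\mathcal{G},G(\ell))\sqcup H^1(\mathcal{G},T(\ell)\rtimes\langle[g]\rangle)\to H^1(\mathcal{G},\Gamma_\ell)$. Injectivity of $\nu_\ell$ needs each piece to map injectively into the local $H^1$, with images meeting only in $1$ (unramified versus ramified). For $H^1(\mathcal{G},G(\ell))$ this is Hensel plus Nisnevich. For the other piece it is the bijectivity of $H^1(\mathcal{G},T(\ell)\rtimes\langle[g]\rangle)\to H^1(\mathcal{G},\Sigma\rtimes\langle[g]\rangle)$, via the congruence filtration and additive Hilbert 90, together with injectivity of $b\mapsto(\ell((x))/k((x)),bx)$. ``Comparing fixed points in $T_0$'' does not supply this.

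\textbf{The limit step.} Quadratic extensions are not cofinal, so passing to the direct limit requires $\nu_\ell$ to be bijective for \emph{every} finite Galois $\ell$. Your method can plausibly do this: for general $\mathcal{G}$, the nontrivial classes from $T(\ell)\rtimes\langle[g]\rangle$ are indexed by a quadratic $\ell_0\subset\ell$ together with $b\in k^\times/N_{\ell_0/k}(\ell_0^\times)$, matching the ramified classes split by $\ell((x))$. That would be a legitimate alternative route, but it is not in your sketch. The paper proves bijectivity only for quadratic $\ell$. It then deduces Theorem \ref{T:Main-sep} from three ingredients: every local class is split by some quadratic $\ell((x))$; Lemma \ref{L:TriK}, the trivial kernel of $\nu_{k'}$ for all finite Galois $k'$, proved via $G(k'((x)))=\Gamma_{k'}G(k'[[x]])$ and Raghunathan--Ramanathan; and inflation--restriction.
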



We derive Theorem \ref{T:Main-sep} from the following.

\begin{thm}\label{T:Main-quadratic}
For every quadratic extension $\ell/k$, the map $\nu_{\ell}$ is a bijection.
\end{thm}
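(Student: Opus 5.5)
The approach is to let $\Gamma_\ell = G(\ell[x,x^{-1}])$ act on the product $\Delta = T_0 \times T_\infty$ of the Bruhat--Tits trees of $\mathrm{PGL}_2$ over the completions $\ell((x))$ and $\ell((x^{-1}))$. The Galois group $\mathcal{G}=\mathrm{Gal}(\ell/k)=\langle\sigma\rangle$ also acts on $\Delta$, compatibly with this action, and a cocycle $a$ gives the twisted action $\gamma \mapsto a\cdot\sigma(\gamma)$. The first step is to realize both trees via homothety classes of $\ell[[x]]$-lattices (resp.\ $\ell[[x^{-1}]]$-lattices) in $\ell((x))^2$ (resp.\ $\ell((x^{-1}))^2$), following Serre. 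Using the fact that $\ell[x,x^{-1}]$ is a PID with a "Laurent--Birkhoff" decomposition, I will construct an explicit fundamental domain for $\Gamma_\ell$ on $\Delta$. I expect it to be the quarter-plane-like region
\[
\{([L_0],[L_{n}]) : L_n = \ell[[x^{-1}]]e_1 \oplus x^{n}\ell[[x^{-1}]]e_2,\ n\ge 0\}
\]
inside a standard apartment. The key feature is that the points of this domain are $\mathcal{G}$-fixed and have explicitly computable stabilizers $\Gamma_{\ell,P}$: for $n=0$ this is $G(\ell)$, and for $n>0$ a Borel-type group of upper triangular matrices with entries of bounded degree, up to the diagonal part given by powers of $x$.

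The second step is to apply the formalism of Lemma \ref{L:cohom}. Since $\mathcal{G}$ is finite and $\Delta$ is a CAT(0) complex, the Bruhat--Tits fixed point theorem shows that each twisted action has a fixed point. Translating that point into the fundamental domain then shows that every class in $H^1(\mathcal{G},\Gamma_\ell)$ comes from $H^1(\mathcal{G},\Gamma_{\ell,P})$ for some $P$ in the domain. Two such classes coincide exactly when they are related through the stabilizers along the domain. Because the stabilizers of the Borel-type points are extensions of a torus by unipotent groups, their cohomology reduces to that of the diagonal torus, where Hilbert 90 applies. Consequently $H^1(\mathcal{G},\Gamma_\ell)$ is generated by two pieces: the classes from $G(\ell)$, that is, quaternion algebras over $k$ split by $\ell$, and the classes from the elements $\mathrm{diag}(x^{\epsilon},1)$ with the appropriate twist. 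I will write down explicit cocycles for these.

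The third step is to compare with the target via the same method on one tree. Applying it to $T_0$ alone with $\ell((x))$ gives $H^1(\ell((x))/k((x)),G)$. Concretely, this target is the set of quaternion algebras over $k((x))$ split by $\ell((x))$, described by Witt's decomposition as $\mathrm{Br}_2(\ell/k)\oplus H^1(k,\mathbb{Z}/2)$-type data restricted to the extension $\ell$: the classes $(d,a)$ and $(d,ax)$, where $\ell=k(\sqrt d)$. Surjectivity follows because the explicit cocycles constructed in step two map onto both families. Injectivity will be checked by tracking the residue/invariant: the class restricted to the vertex stabilizer $G(\ell)$ determines the class $(d,a)$, and the $x$-part determines the residue.

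The main obstacle is proving that the proposed region really is a fundamental domain, and in particular that it is \emph{strict}, with no further identifications. This requires a simultaneous reduction of a lattice pair at $0$ and $\infty$, in effect the Grothendieck-type classification of rank-2 bundles on $\mathbb{P}^1$ glued at two points. I would first try an elementary Euclidean-algorithm argument on column degrees in $\ell[x,x^{-1}]$. A secondary delicate point is injectivity on the $G(\ell)$-part. There I would use that the normalized class has a fixed vertex on the $T_0$ side, and that $\mathrm{GL}_2(\ell[[x]])\to\mathrm{GL}_2(\ell)$ induces a bijection on $H^1$ by the usual unipotent-kernel argument.
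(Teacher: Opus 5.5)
There is a genuine gap in your second step, and it is exactly where the ramified classes live.

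Your region consists of vertex pairs $(P_0,[L_n])$, $n\ge 0$, with $P_0=[\ell[[x]]e_1+\ell[[x]]e_2]$. This is indeed a fundamental domain for $\Gamma_\ell$ acting on \emph{pairs of vertices}: $\Gamma_\ell$ is transitive on vertices of $\mathcal{T}_\ell$, and $\mathrm{Stab}_{\Gamma_\ell}(P_0)=\mathrm{PGL}_2(\ell[x])$ has Serre's ray as a fundamental domain on $\mathcal{T}^-_\ell$. However, the Bruhat--Tits fixed point theorem produces a fixed point of the geometric realization, and for the classes that matter there is no fixed vertex.

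Take $\zeta(\sigma)=[A]$ with $A=\begin{pmatrix}0&a\\x&0\end{pmatrix}$ and $a\in k^\times$. This is a cocycle, since $A$ is $\sigma$-fixed and $A^2=axI_2$. Suppose $A\sigma(\Lambda)=\lambda\Lambda$ for a lattice $\Lambda\subset\ell((x))^2$. Applying $\sigma$ and then $A$ gives $ax\Lambda=N(\lambda)\Lambda$. This is impossible, because $v(ax)=1$ while $v(N(\lambda))=2v(\lambda)$, as $\ell((x))/k((x))$ is unramified. The same parity argument works on $\mathcal{T}^-_\ell$. So the twisted action fixes only a pair of edge midpoints, and no element of $\Gamma_\ell$ can move that point into your region, or into any region made of vertex pairs.

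With only the stabilizers you list, your argument would ``prove'' that $H^1(\mathcal{G},\Gamma_\ell)$ is the image of $H^1(\mathcal{G},G(\ell))$. Those stabilizers are $G(\ell)$ and upper triangular groups with \emph{constant} diagonal and bounded-degree unipotent part, whose $H^1$ vanishes by Hilbert 90. Then $\nu_\ell$ would only hit the unramified algebras $(d,a)$, contradicting surjectivity. The second piece you announce, ``$\mathrm{diag}(x^{\epsilon},1)$ with the appropriate twist'', is not produced by your domain at all. Moreover, $\mathrm{diag}(x,1)$ is not a cocycle value: it is $\sigma$-fixed, but its square is nontrivial in $\mathrm{PGL}_2$.

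What is missing is a $\mathcal{G}$-fixed weak fundamental domain for the whole space $\mathcal{T}_\ell\times\mathcal{T}^-_\ell$ that contains the point $(M,M^-_{-1})$. Here $M$ is the midpoint of $P_0P_1$, with $P_1=[\ell[[x]]e_1+x\ell[[x]]e_2]$, and $M^-_{-1}$ is the midpoint of $P^-_{-1}P^-_0$, with $P^-_n=[\ell[[x^{-1}]]e_1+x^{-n}\ell[[x^{-1}]]e_2]$. The paper uses $\Phi=P_0P_1\times\mathcal{A}^-$, an edge times the full apartment, which is a weak fundamental domain by the Birkhoff decomposition $\Gamma=BNB^-$ together with $\Gamma\cdot P_0P_1=\mathcal{T}$.

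The key new stabilizer is $\mathrm{Stab}(M,M^-_{-1})=T(\ell)\rtimes\langle[g]\rangle$ with $g=\begin{pmatrix}0&1\\x&0\end{pmatrix}$, which inverts an edge in both trees simultaneously. Its $H^1$ is $\{1\}\cup k^\times/N_{\ell/k}(\ell^\times)$, with representatives $\sigma\mapsto[\begin{pmatrix}0&a\\x&0\end{pmatrix}]$ whose local images are $(\ell((x))/k((x)),ax)$.

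Strictness of the domain, which you single out as the main obstacle, is irrelevant. Lemma~\ref{L:cohom} only needs a weak fundamental domain, and you never need to know how classes from different stabilizers are identified in $H^1(\mathcal{G},\Gamma_\ell)$. Injectivity of $\nu_\ell$ follows from two facts. First, $\nu_\ell$ is injective on the image of each of the two pieces. For $G(\ell)$ this holds because $H^1(\mathcal{G},G(\ell))\to H^1(\mathcal{G},G(\ell[[x]]))$ is bijective and $H^1(\mathcal{G},G(\ell[[x]]))\to H^1(\mathcal{G},G(\ell((x))))$ is injective; the second piece is handled analogously via $\Sigma\rtimes\langle g\rangle$. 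Second, the two local images meet only in the trivial class, since one consists of unramified and the other of ramified algebras. Your third step is compatible with this once the second piece is correctly obtained.
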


An important point is that our proofs
of Theorems \ref{T:Main-sep} and \ref{T:Main-quadratic} do not make use of
any ``global'' results on Galois cohomology over the field of rational functions $K = k(x)$, such as Faddeev's sequence.

\vskip1mm

The paper is structured as follows.
In \S \ref{S:BT}, we review the construction of the Bruhat-Tits tree associated with a local field $\mathcal{K}$, calculate a fundamental domain and the stabilizers of points in this fundamental domain under the action of $G(\mathcal{K}) = \mathrm{PGL}_2(\mathcal{K})$, and use this information to compute and interpret the Galois cohomology of $G$ over $\mathcal{K}$.
In \S \ref{S:Product}, we consider the field of rational functions $K = k(x)$, its valuations $v$ and $v^-$ associated with $x$ and $x^{-1}$, respectively, the corresponding completions $k((x))$ and $k((x^{-1}))$, and the associated Bruhat-Tits trees $\mathcal{T}$ and $\mathcal{T}^-$. The goal of this section is to describe a fundamental domain and the stabilizers of points in this fundamental domain for the diagonal action of $\Gamma = G(k[x , x^{-1}])$ on the product $\mathcal{X} = \mathcal{T} \times \mathcal{T}^-$. In \S \ref{S:Proof1.2} we apply these results to the action of $\Gamma_{\ell}$ for a given quadratic extension $\ell/k$ on $\mathcal{X}_{\ell} = \mathcal{T}_{\ell} \times \mathcal{T}_{\ell}^-$ to prove Theorem \ref{T:Main-quadratic} (where $\mathcal{T}_{\ell}$ and $\mathcal{T}_{\ell}^-$ are the Bruhat-Tits trees associated with $\ell((x))$ and $\ell((x^{-1}))$). Then in \S \ref{S:proof1.1}, we prove Theorem \ref{T:Main-sep}. The last two sections are devoted to some
other applications of the results developed in \S \ref{S:Product}. Namely, in \S  \ref{S:FS}, we describe finite subgroups of $\Gamma = G(k[x , x^{-1}])$ for  a field $k$ of characteristic zero (Theorem \ref{T:FS1}), which implies that when $k$ is a $p$-adic field, $\Gamma$ has finitely many conjugacy classes of finite subgroups (Corollary \ref{C:FC1}). Finally, in \S \ref{S:LG}, the Raghunathan-Ramanathan Theorem and our Theorem \ref{T:Main-sep} are applied to prove the triviality of the kernel of the global-to-local map in Galois cohomology in some situations (see Theorems \ref{T:LG1} and \ref{T:LG2}).

Throughout the paper, we use standard notations pertaining to algebraic number theory, algebraic groups, and Galois cohomology  (see \cite{PlRR}, particularly the list of notations on pages xiii-xv).

\section{The Bruhat-Tits tree}\label{S:BT}

In this section, we first review the construction of the Bruhat-Tits tree associated with the group $G = \mathrm{PGL}_2$ over a local field $\mathcal{K}$ (following \cite{Serre-Trees}). We then describe in detail the Bruhat-Tits approach for computing
the Galois cohomology of $G$ over $\mathcal{K}$. Although these results are known, to the best of our knowledge, they have not been previously discussed in the literature in the context of Bruhat-Tits theory. We will use these ideas in later sections to prove Theorems   \ref{T:Main-sep} and \ref{T:Main-quadratic}.

\subsection{Construction} Let $G = \mathrm{PGL}_2$, and suppose $\mathcal{K}$ is a field equipped with a discrete valuation $v$. Then the group $G(\mathcal{K}) = \mathrm{PGL}_2(\mathcal{K})$ naturally acts on a tree $\mathcal{T}$ (which is precisely the Bruhat-Tits building associated with this situation). We begin by quickly reviewing the explicit construction of this tree, as described in \cite[Ch. II, \S 1]{Serre-Trees}.

Let $\mathcal{O}$ be the valuation ring of $\mathcal{K}$ with valuation ideal $\mathfrak{p}$, and fix a uniformizer $\pi \in \mathfrak{p}.$
We consider a 2-dimensional $\mathcal{K}$-vector space $\mathcal{W} = \mathcal{K}^2$ with a (fixed) standard basis $e_1 , e_2$. A {\it lattice} (or $\mathcal{O}$-lattice) $\Lambda$ in $\mathcal{W}$ is an $\mathcal{O}$-submodule $\Lambda \subset \mathcal{W}$ of rank two. For a lattice $\Lambda$, we denote by $[\Lambda]$ the (equivalence) class of lattices that are proportional to $\Lambda$. Then
$$
\mathbf{V} := \{ \ [\Lambda] \ : \ \Lambda \subset \mathcal{W} \ \ \text{is a lattice} \}
$$
will serve as the vertex set of our tree $\mathcal{T}$. A pair of vertices $\mathcal{P}_1 , \mathcal{P}_2 \in \mathbf{V}$ defines a (nonoriented) edge in $\mathcal{T}$ if there exist lattices $\Lambda_1 , \Lambda_2 \subset \mathcal{W}$ such that $\mathcal{P}_i = [\Lambda_i]$ for $i = 1, 2,$ and $\Lambda_1 \subset \Lambda_2$, with $\Lambda_1/\Lambda_2 \simeq \mathcal{O}/\mathfrak{p}$ as $\mathcal{O}$-modules. Let $\mathbf{E}$ denote the set of all edges.
\begin{lemma}
{\rm (\cite[Ch. II, \S 1]{Serre-Trees})} $\mathcal{T} = (\mathbf{V} , \mathbf{E})$ is a tree.
\end{lemma}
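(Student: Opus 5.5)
Following Serre's argument, I will prove separately that $\mathcal{T}$ is connected and that it has no cycles. (The quotient in the definition of an edge should read $\Lambda_2/\Lambda_1 \simeq \mathcal{O}/\mathfrak{p}$.) The basic tool is the elementary divisor theorem over the discrete valuation ring $\mathcal{O}$: for any two lattices $\Lambda, \Lambda'$ there is an $\mathcal{O}$-basis $f_1, f_2$ of $\Lambda$ and integers $a \le b$ such that $\pi^a f_1, \pi^b f_2$ is a basis of $\Lambda'$. The integer $b-a$ depends only on the classes $[\Lambda], [\Lambda']$. After rescaling $\Lambda'$ we may take $a=0$, so that $\Lambda \supset \Lambda'$ and $\Lambda/\Lambda' \simeq \mathcal{O}/\mathfrak{p}^n$ with $n=b-a$. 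One checks that $n=1$ exactly when the classes are adjacent. It is also worth noting that if $\Lambda_1 \subset \Lambda_2$ with quotient $\mathcal{O}/\mathfrak{p}$, then $\pi\Lambda_2 \subset \Lambda_1$, so the lattices lying strictly between $\pi\Lambda$ and $\Lambda$ correspond to lines in $\Lambda/\pi\Lambda \simeq (\mathcal{O}/\mathfrak{p})^2$.

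For connectedness, take two vertices and normalize representatives as above: $\Lambda = \mathcal{O}f_1 + \mathcal{O}f_2$ and $\Lambda' = \mathcal{O}f_1 + \mathcal{O}\pi^n f_2$. Then the chain $\Lambda_i = \mathcal{O}f_1 + \mathcal{O}\pi^i f_2$ for $i=0,\dots,n$ has successive quotients isomorphic to $\mathcal{O}/\mathfrak{p}$. This gives a path of length $n$ joining the two vertices.

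For the absence of cycles, I will show that every path without backtracking has length equal to the invariant $n$ of its endpoints. In particular its endpoints are distinct, which rules out cycles. Take a non-backtracking path $[\Lambda_0], \dots, [\Lambda_n]$. Using the remark about $\pi\Lambda$, choose representatives inductively so that $\Lambda_0 \supset \Lambda_1 \supset \cdots \supset \Lambda_n$ and each successive quotient is $\mathcal{O}/\mathfrak{p}$. The key claim is that the absence of backtracking ($[\Lambda_{i+1}] \ne [\Lambda_{i-1}]$) forces $\Lambda_{i+1} \ne \pi\Lambda_{i-1}$. Since $\Lambda_{i-1}/\Lambda_{i+1}$ has length $2$ and is not killed by $\pi$, it is cyclic, so $\Lambda_{i-1}/\Lambda_{i+1} \simeq \mathcal{O}/\mathfrak{p}^2$. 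Inductively, $\Lambda_0/\Lambda_n$ is then cyclic of length $n$, i.e. $\Lambda_0/\Lambda_n \simeq \mathcal{O}/\mathfrak{p}^n$. Hence $\Lambda_n \not\subset \pi\Lambda_0$, so $\Lambda_n$ is not proportional to $\Lambda_0$ when $n \ge 1$. (Here we use that if $\pi^m\Lambda_0 \subseteq \Lambda_n$ with a cyclic quotient, then proportionality would force $\Lambda_n = \pi^n\Lambda_0$, giving the non-cyclic quotient $(\mathcal{O}/\mathfrak{p}^n)^2$.)

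The main obstacle is the inductive step showing that cyclicity propagates. Suppose $\Lambda_0/\Lambda_i$ is cyclic of length $i$ and $\Lambda_{i+1} \ne \pi\Lambda_{i-1}$. We need $\Lambda_0/\Lambda_{i+1}$ to be cyclic. I would handle this by noting that $\Lambda_0/\Lambda_{i+1}$ is cyclic if and only if $\Lambda_{i+1} \not\subset \pi\Lambda_0$. Suppose instead $\Lambda_{i+1} \subset \pi\Lambda_0$. Since $\pi\Lambda_0 \cap \Lambda_{i-1}$ has colength $1$ in $\Lambda_{i-1}$ (by cyclicity), and $\Lambda_{i+1} \subset \pi\Lambda_{i-1}$ is false, comparing the lines in $\Lambda_{i-1}/\pi\Lambda_{i-1}$ gives a contradiction. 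This amounts to a careful bookkeeping of lines in two-dimensional $\mathcal{O}/\mathfrak{p}$-spaces.
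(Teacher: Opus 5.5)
Your proposal is correct and follows essentially the paper's route. Connectedness comes from a chain of lattices with simple quotients, and acyclicity comes from showing that non-backtracking propagates cyclicity of $\Lambda_0/\Lambda_i$; the paper phrases this as a contradiction at the maximal $\ell$ with $\Lambda_0/\Lambda_\ell$ cyclic.

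The only real difference is the key step. The paper obtains $\pi^{-1}\Lambda_{\ell+1}=\Lambda_{\ell-1}$ directly from the uniqueness of the length-one submodule of the cyclic module $\Lambda_0/\Lambda_\ell$. This is a bit cleaner than your comparison of lines in $\Lambda_{i-1}/\pi\Lambda_{i-1}$. In your version you must also state explicitly that the line of $\Lambda_i$ differs from that of $\pi\Lambda_0\cap\Lambda_{i-1}$, which holds because $\Lambda_i\not\subset\pi\Lambda_0$. Your parenthetical should also read $\Lambda_n=\pi^{n/2}\Lambda_0$ rather than $\pi^n\Lambda_0$, though the conclusion $\Lambda_n\not\subset\pi\Lambda_0$ already suffices.
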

\noindent {\it Quick proof.} \underline{Connectedness}: Given any two vertices $\mathcal{P} , \mathcal{Q}$, we can find lattices $\Lambda , \Lambda' \subset \mathcal{W}$ representing these vertices so that $\Lambda \supset \Lambda'$. There exists $m \geq 1$ such that $\pi^m \Lambda \subset \Lambda'$, so the quotient $\Lambda/\Lambda'$ is an $\mathcal{O}$-module of finite length. Hence, one can find a finite sequence of lattices
$$
\Lambda = \Lambda_0 \supset \Lambda_1 \supset \cdots \supset \Lambda_n = \Lambda'
$$
with $\Lambda_i/\Lambda_{i+1} \simeq \mathcal{O}/\mathfrak{p}$ for all $i = 0, \ldots , n-1$. Then the vertices $\mathcal{P}_i = [\Lambda_i]$ define a path $\mathcal{P} = \mathcal{P}_0, \mathcal{P}_1, \ldots , \mathcal{P}_n = \mathcal{Q}$ in $\mathcal{T}$ connecting $\mathcal{P}$ and $\mathcal{Q}$.

\vskip1mm

\underline{Absence of cycles}: Suppose the vertices $\mathcal{P}_0, \ldots , \mathcal{P}_n = \mathcal{P}_0$ $(n \geq 2)$ define a cycle without backtracking in $\mathcal{T}$. We can choose lattices $\Lambda_i \subset \mathcal{W}$ $(i = 0, \ldots , n)$ such that $\mathcal{P}_i = [\Lambda_i]$, and $\Lambda_i \supset \Lambda_{i+1}$ with $\Lambda_i/\Lambda_{i+1} \simeq \mathcal{O}/\mathfrak{p}$. Since $\mathcal{P}_n = \mathcal{P}_0$, we have $\Lambda_n = \lambda \Lambda_0$ for some $\lambda \in \mathfrak{p}$, and therefore the quotient $\Lambda_0/\Lambda_n$ is not a cyclic $\mathcal{O}$-module. Let $\ell \geq 1$ be maximal such that $\Lambda_0/\Lambda_{\ell}$ is cyclic --- clearly $\ell < n$. Then $\Lambda_0/\Lambda_{\ell+1}$ is not cyclic, so it follows from the theorem on elementary divisors that $\Lambda' := \pi^{-1}\mathcal{L}_{\ell+1}$ is contained in $\Lambda_0$, and it obviously contains $\Lambda_{\ell}$ (since $\pi \Lambda_{\ell} \subset \Lambda_{\ell+1}$) with $\Lambda'/\Lambda_{\ell} \simeq \mathcal{O}/\mathfrak{p}$. The cyclicity of $\Lambda_0/\Lambda_{\ell}$ implies that $\Lambda' = \Lambda_{\ell-1}$.
Thus, the vertices $\mathcal{P}_{\ell-1} , \mathcal{P}_{\ell} , \mathcal{P}_{\ell+1}$ form  a backtracking, a contradiction.~\hfill ~ $\Box$

\vskip2mm

In this note, we will routinely identify the combinatorial graph $\mathcal{T}$ with its geometric realization.

\vskip2mm

For $H = \mathrm{GL}_2$, the standard action of $H(\mathcal{K})$ on $\mathcal{W}$ induces a transitive action on the set of all lattices, and also on $\mathbf{V}$. Moreover, the action of $H(\mathcal{K})$ on $\mathbf{V}$ descends to an action of $G(\mathcal{K})$. It is easy to see that $G(\mathcal{K})$ acts transitively on the edges of $\mathcal{T}$; in other words, every edge is a (weak) fundamental domain for this action. More specifically, let us consider the following lattices:
$$
\Lambda_n = \mathcal{O}e_1 + \mathcal{O} \pi^n e_2, \ \ \ n = 0, 1, \ldots ,
$$
for which the corresponding vertices $\mathcal{P}_i = [\Lambda_i]$ form a ray in $\mathcal{T}$; then the segment $\mathcal{P}_0\mathcal{P}_1$ is a fundamental domain for the action of $G(\mathcal{K})$ on $\mathcal{T}$.

\subsection{Stabilizers} The goal of this subsection is to record a description of the stabilizers of certain vertices and some other points in $\mathcal{T}$ under the action of $G(\mathcal{K})$. For $A \in \mathrm{GL}_2(\mathcal{K})$, we denote by $[A]$ the corresponding element of $G(\mathcal{K}) = \mathrm{PGL}_2(\mathcal{K})$. We also set
$$
\mathrm{PGL}_2(\mathcal{O}) = \{ \ [A] \ \vert \ A \in \mathrm{GL}_2(\mathcal{O}) \ \} \ \ \text{and} \ \ \Sigma_n = \mathrm{Stab}_{G(\mathcal{K})}([\Lambda_n]).
$$
If $A \in \mathrm{GL}_2(\mathcal{K})$ and $[A] \in \Sigma_0$, then $A(\Lambda_0) = \lambda \Lambda_0$ for some $\lambda \in \mathcal{K}^{\times}$. So, $B := \lambda^{-1}A \in \mathrm{GL}_2(\mathcal{O})$ and $[A] = [B]$, implying that $\Sigma_0 = \mathrm{PGL}_2(\mathcal{O})$. Furthermore, for any $n \geq 0$, the matrix $T_n := \mathrm{diag}( 1 , \pi^n)$ satisfies $T_n(\Lambda_0) = \Lambda_n$, so
\begin{equation}\label{E:StabDesc1}
\Sigma_n = [T_n] \cdot \mathrm{PGL}_2(\mathcal{O}) \cdot [T_n]^{-1} \ \ \text{for all} \ \ n \geq 0.\end{equation}
Set
$$
W_n = \left(  \begin{array}{cc} 0 & 1 \\ \pi^{2n+1} & 0 \end{array} \right).
$$
One easily checks that
$$
W_n(\Lambda_n) = \pi^n \Lambda_{n+1}, \ \ \ W_n(\Lambda_{n+1}) = \pi^{n+1} \Lambda_n, \ \ \ \text{and} \ \ \ W_n^2 = \pi^{2n+1} I_2.
$$
So, $w_n = [W_n]$ is an element of order two in $G(\mathcal{K})$ that interchanges $[\Lambda_n]$ and $[\Lambda_{n+1}]$.

Let $\mathcal{M}_n$ be the midpoint of the edge $\mathcal{E}_n$ in (the geometric realization of) $\mathcal{T}$ connecting $[\Lambda_n]$ and $[\Lambda_{n+1}]$. Then for any point $\mathcal{P} \in \mathcal{E}_n$ other than $[\Lambda_n]$, $[\Lambda_{n+1}]$, and $\mathcal{M}_n$, we have
$$
\Sigma(\mathcal{P}) = \mathrm{Stab}_{G(\mathcal{K})}(\mathcal{P}) = \Sigma_n \bigcap \Sigma_{n+1},
$$
while
$$
\Sigma(\mathcal{M}_n) = \mathrm{Stab}_{G(\mathcal{K})}(\mathcal{M}_n) = (\Sigma_n \bigcap \Sigma_{n+1}) \rtimes \langle w_n \rangle.
$$
We will now record the matrix descriptions of these stabilizers.
\begin{prop}\label{P:Stab1}
In the above notations, we have

\vskip2mm

\begin{itemize}
\item[{\rm (i)}] $\Sigma_n = \mathrm{Stab}_{G(\mathcal{K})}([\Lambda_n]) = \left\{ [A] \ \colon \ A = \left( \begin{array}{cc} a & b \\ c & d  \end{array} \right), \ \ a , d \in \mathcal{O}, \ c \in  \pi^n \mathcal{O}, \ b \in \pi^{-n}\mathcal{O}, \ \text{and}  \ \det A \in \mathcal{O}^{\times}   \right\}$;

\vskip2mm

\item[{\rm (ii)}] $\Sigma_n \cap \Sigma_{n+1} = \left\{ [A] \ \colon \ A = \left( \begin{array}{cc} a & b \\ c & d \end{array}  \right), \  \  a , d \in \mathcal{O}, \ c \in \pi^{n+1} \mathcal{O}, \ b \in \pi^{-n} \mathcal{O}, \ \text{and} \ \det A \in \mathcal{O}^{\times}    \right\}$;

\vskip2mm

\item[{\rm (iii)}] $\Sigma(\mathcal{M}_n) = \mathrm{Stab}_{G(\mathcal{K})}(\mathcal{M}_n) = (\Sigma_n \bigcap \Sigma_{n+1}) \rtimes \langle w_n \rangle$, and $\Sigma(\mathcal{P}) = \mathrm{Stab}_{G(\mathcal{K})}(\mathcal{P}) = \Sigma_n \bigcap  \Sigma_{n+1}$ for any point $\mathcal{P} \in \mathcal{E}_n$ different from $[\Lambda_n]$, $[\Lambda_{n+1}]$, and $\mathcal{M}_n$.

\end{itemize}





\end{prop}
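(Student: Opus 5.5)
For (i), the plan is to use (\ref{E:StabDesc1}), $\Sigma_n = [T_n]\,\mathrm{PGL}_2(\mathcal{O})\,[T_n]^{-1}$, and compute the conjugation directly. For $B = \left(\begin{smallmatrix} \alpha & \beta \\ \gamma & \delta \end{smallmatrix}\right) \in \mathrm{GL}_2(\mathcal{O})$ one has
$$
T_n B T_n^{-1} = \left(\begin{array}{cc} \alpha & \pi^{-n}\beta \\ \pi^n \gamma & \delta \end{array}\right).
$$
This matrix has $a,d \in \mathcal{O}$, $b \in \pi^{-n}\mathcal{O}$, $c \in \pi^n\mathcal{O}$ and $\det = \det B \in \mathcal{O}^\times$. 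Conversely, any $A$ with these properties equals $T_n B T_n^{-1}$ for $B = T_n^{-1} A T_n$, and $B$ has entries in $\mathcal{O}$ and unit determinant, so $B \in \mathrm{GL}_2(\mathcal{O})$. One subtlety: the set on the right of (i) is defined as classes $[A]$ having \emph{some} representative $A$ with these properties. The computation shows exactly that $[A]\in\Sigma_n$ iff some scalar multiple of $A$ conjugates back into $\mathrm{GL}_2(\mathcal{O})$, so this matches.

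For (ii), intersect the two descriptions. Take $[A] \in \Sigma_n\cap\Sigma_{n+1}$, with $A$ a representative as in (i) for $n$ and $A' = \lambda A$ a representative as in (i) for $n+1$. Both determinants are units, so $\lambda^2 \in \mathcal{O}^\times$, hence $\lambda \in \mathcal{O}^\times$. Therefore $A$ itself satisfies both sets of conditions: $c \in \pi^{n+1}\mathcal{O}$ and $b \in \pi^{-n}\mathcal{O}$, since $\pi^{-n}\mathcal{O} \subset \pi^{-n-1}\mathcal{O}$ and $\pi^{n+1}\mathcal{O}\subset\pi^n\mathcal{O}$. The converse inclusion is immediate. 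This normalization of the scalar is the one point requiring care; it is settled by the valuation of the determinant.

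For (iii), the plan is to use that $G(\mathcal{K})$ acts on $\mathcal{T}$ by simplicial automorphisms, i.e.\ isometries of the geometric realization. An element fixing a point $\mathcal{P}$ in the interior of $\mathcal{E}_n$ must stabilize the edge $\mathcal{E}_n$. It therefore either fixes both endpoints or swaps them. If it fixes both endpoints, it lies in $\Sigma_n\cap\Sigma_{n+1}$ and fixes the edge pointwise. If it swaps them, it acts on $\mathcal{E}_n$ as the reflection about $\mathcal{M}_n$, which fixes only $\mathcal{M}_n$. Hence for $\mathcal{P}\neq\mathcal{M}_n$ the stabilizer is $\Sigma_n\cap\Sigma_{n+1}$.

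For $\mathcal{M}_n$, the stabilizer $\Sigma(\mathcal{M}_n)$ is the setwise stabilizer of the edge. Restriction to the two endpoints gives a homomorphism to $\mathbb{Z}/2$ with kernel $\Sigma_n\cap\Sigma_{n+1}$. The element $w_n$ (computed above to swap $[\Lambda_n]$ and $[\Lambda_{n+1}]$ and to have order two in $G(\mathcal{K})$, since $W_n^2$ is scalar) maps to the generator. This gives a split extension $(\Sigma_n\cap\Sigma_{n+1})\rtimes\langle w_n\rangle$.

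The only genuinely nontrivial point I expect is the scalar-normalization issue in (ii) and in the converse of (i): the conditions in (i) should be read as conditions on a suitable representative. I would handle it uniformly by noting that the unit-determinant condition pins down the representative up to $\mathcal{O}^\times$, and every other condition in (i) and (ii) is invariant under multiplication by $\mathcal{O}^\times$.
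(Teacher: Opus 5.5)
Your proposal is correct and follows the paper's proof: (i) is the same computation of $T_n\,\mathrm{GL}_2(\mathcal{O})\,T_n^{-1}$ combined with (\ref{E:StabDesc1}), and (ii) is the same intersection argument, where your observation that $\lambda^2 \in \mathcal{O}^{\times}$ forces $\lambda \in \mathcal{O}^{\times}$ supplies the justification the paper leaves implicit. For (iii), the paper simply refers back to the preceding discussion of $w_n$, and your edge-stabilizer argument (fix both endpoints, or swap them and act as the reflection about $\mathcal{M}_n$, with $w_n$ splitting the map to $\mathbb{Z}/2$) is the standard way to make that assertion precise.
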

\begin{proof}
(i): One easily checks that
\begin{equation}\label{E:StabDesc2}
 T_n \cdot \mathrm{GL}_2(\mathcal{O}) \cdot T_n^{-1} =  \left\{ [A] \ \colon \ A = \left( \begin{array}{cc} a & b \\ c & d  \end{array} \right), \ \ a , d \in \mathcal{O}, \ c \in  \pi^n \mathcal{O}, \ b \in \pi^{-n}\mathcal{O}, \ \text{and}  \ \det A \in \mathcal{O}^{\times}   \right\},
\end{equation}
and our claim follows from (\ref{E:StabDesc1}).

\vskip1mm

(ii): Let $g \in \Sigma_n \bigcap \Sigma_{n+1}$. Then
$$
g = [A] = [B] \ \ \text{with} \ \ A \in T_n \cdot \mathrm{GL}_2(\mathcal{O}) \cdot T_{n}^{-1} \ \ \text{and} \ \ B \in T_{n+1} \cdot \mathrm{GL}_2(\mathcal{O}) \cdot T_{n+1}^{-1}.
$$
In this case, $A = \lambda B$ with $\lambda \in \mathcal{O}^{\times}$, so it follows from (\ref{E:StabDesc2}) for $n$ and $n+1$ that
$$
g \in \left\{ [A] \ \colon \ A = \left( \begin{array}{cc} a & b \\ c & d \end{array}  \right), \  \  a , d \in \mathcal{O}, \ c \in \pi^{n+1} \mathcal{O}, \ b \in \pi^{-n} \mathcal{O}, \ \text{and} \ \det A \in \mathcal{O}^{\times}    \right\},
$$
proving one inclusion, with the other inclusion being obvious.

\vskip1mm

(iii): This has already been established.
\end{proof}

\vskip1mm

\subsection{Cohomology} For a field $F$, we denote by $Q(F)$ the set of isomorphism classes of central quaternion $F$-algebras. It is well known (cf. \cite[Theorem 2.4.3]{GS}) that for $G = \mathrm{PGL}_2$, there is a natural bijection $\kappa_F \colon H^1(F , G) \to Q(F)$ that sends the cohomology class of a Galois cocycle $\zeta \in Z^1(F , G)$ to the isomorphism class of the twisted algebra $\ _{\zeta}\mathrm{M}_2$ (where we identify $G$ with the automorphism group of the matrix algebra $\mathrm{M}_2$). Furthermore, for a fixed (separable) quadratic extension $E/F$,
we set $Q(E/F)$ to be the set of isomorphism classes of quaternion $F$-algebras that contain $E$ as a maximal subfield (or, equivalently, split over $E$). Then the above map $\kappa_F$ induces a bijection $\kappa_{E/F} \colon H^1(E/F , G) \to Q(E/F)$.

Now let $\mathcal{K}$ be a local field (i.e. a field that is complete with respect to a discrete valuation) with the residue field $k$, and assume that ${\rm char}~k \neq 2.$
It is well known that
due to this assumption,
every quaternion $\mathcal{K}$-algebra $\mathcal{D}$ contains a maximal subfield $\mathcal{L}$ that is unramified over $\mathcal{K}$ (this follows, for example, from \cite[Theorem 10.1]{Saltman}); let $\ell$ denote the residue field of $\mathcal{L}$. Then we have a decomposition
\begin{equation}\label{E:Decomp1}
Q(\mathcal{L}/\mathcal{K}) = Q(\mathcal{L}/\mathcal{K})_{\mathrm{ur}} \, \cup \, Q(\mathcal{L}/\mathcal{K})_{\mathrm{ram}},
\end{equation}
where $Q(\mathcal{L}/\mathcal{K})_{\mathrm{ur}}$ (resp., $Q(\mathcal{L}/\mathcal{K})_{\mathrm{ram}}$) is the subset of $Q(\mathcal{L}/\mathcal{K})$ consisting of the isomorphism classes of unramified (resp., ramified) quaternion algebras.
Our goal in this subsection is to identify the decomposition of $H^1(\mathcal{L}/\mathcal{K} , G)$ that corresponds to (\ref{E:Decomp1}) under the bijection $\kappa_{\mathcal{L}/\mathcal{K}} \colon H^1(\mathcal{L}/\mathcal{K}, G) \to Q(\mathcal{L}/\mathcal{K})$, and in fact to produce this decomposition using an appropriate Bruhat-Tits tree.

For this, we extend the 2-dimensional $\mathcal{K}$-vector space $\mathcal{W}$ with the standard basis $e_1 , e_2$ to the 2-dimensional $\mathcal{L}$-vector space $\mathcal{W}_{\mathcal{L}} = \mathcal{W} \otimes_{\mathcal{K}} \mathcal{L}$ and equip $\mathcal{W}_{\mathcal{L}}$ with the natural action of the Galois group $\mathcal{G} = \mathrm{Gal}(\mathcal{L}/\mathcal{K}) = \{1, \sigma \}$. Then the Bruhat-Tits tree $\mathcal{T}_{\mathcal{L}}$ constructed using 
$\mathcal{O}_{\mathcal{L}}$-lattices in $\mathcal{W}_{\mathcal{L}}$ (where $\mathcal{O}_{\mathcal{L}}$ is the valuation ring in $\mathcal{L}$) also receives a $\mathcal{G}$-action. We note that sending the class $[\Lambda]$ of an $\mathcal{O}$-lattice $\Lambda$ to the class $[\Lambda \otimes_{\mathcal{O}} \mathcal{O}_{\mathcal{L}}]$ defines an isometric embedding $\mathcal{T} \to \mathcal{T}_{\mathcal{L}}$, and that the image of this embedding (which will be identified with $\mathcal{T})$) coincides with the fixed point set $\bigl( \mathcal{T}_{\mathcal{L}}  \bigr)^{\mathcal{G}}$. (Indeed, if the nontrivial element $\sigma \in \mathcal{G}$ fixes the equivalence class $[\tilde{\Lambda}]$ of an $\mathcal{O}_{\mathcal{L}}$-lattice $\tilde{\Lambda}$, then it actually leaves invariant the lattice $\tilde{\Lambda}$ itself, and therefore $\tilde{\Lambda} = \Lambda \otimes_{\mathcal{O}} \mathcal{O}_{\mathcal{L}}$ for the $\mathcal{O}$-lattice $\Lambda := \bigl( \tilde{\Lambda}  \bigr)^{\mathcal{G}}$ by unramified descent.)

The group $G(\mathcal{L})$ also acts on $\mathcal{T}_{\mathcal{L}}$, and this action is compatible with the natural action of $\mathcal{G}$ on both objects, leading to an action of the semi-direct product $\Gamma = G(\mathcal{L}) \rtimes \mathcal{G}$. It follows from the above discussion that the segment (edge) $\mathcal{P}_0\mathcal{P}_1$, where $\mathcal{P}_n = [\mathcal{O}_{\mathcal{L}} e_1 +  \mathcal{O}_{\mathcal{L}} \pi^n e_2]$ for $n = 0, 1$, is a fundamental domain for the action of $G(\mathcal{L})$ on $\mathcal{T}_{\mathcal{L}}$.

Now let $\zeta \in Z^1(\mathcal{L}/\mathcal{K} , G)$ be a Galois 1-cocycle on $\mathcal{G} = \mathrm{Gal}(\mathcal{L}/\mathcal{K})$ with values in $G(\mathcal{L})$. Then the map
$$
\mathcal{G} \to \Gamma, \ \ \ \tau \mapsto (\zeta(\tau) , \tau)
$$
is a group homomorphism (see \cite[Lemma 3.1]{ARR}), which gives rise to an isometric action of $\mathcal{G}$ on $\mathcal{T}_{\mathcal{L}}$, given by
$$
\mathcal{P} \mapsto \zeta(\tau)(\tau(\mathcal{P})).
$$
By the Bruhat-Tits Fixed Point Theorem (see \cite[Theorem 11.23]{AbBr} or \cite[\S3.2]{BT72}),
this action has a fixed point $\mathcal{Q} \in \mathcal{T}_{\mathcal{L}}$. Since the segment (edge) $\mathcal{P}_0 \mathcal{P}_1$ is a fundamental domain, there exist $\mathcal{P} \in \mathcal{P}_0\mathcal{P}_1 \setminus \{ \mathcal{P}_1 \}$ and $g \in G(\mathcal{L})$ such that $\mathcal{Q} = g \mathcal{P}$. Then the equivalent cocycle $\zeta'$ defined by
$$
\zeta'(\tau) = g^{-1} \zeta(\tau) \tau(g) \ \ \text{for} \ \ \tau \in \mathcal{G}
$$
takes values in the stabilizer $\Sigma_{\mathcal{L}}(\mathcal{P}) := \mathrm{Stab}_{G(\mathcal{L})}(\mathcal{P})$ (see Lemma \ref{L:cohom} below for a more general statement). The above computations of stabilizers shows that for any $\mathcal{P} \in \mathcal{P}_0\mathcal{P}_1 \setminus \{ \mathcal{M} , \mathcal{P}_1 \}$, we have the inclusion
$$
\Sigma_{\mathcal{L}}(\mathcal{P}) \subseteq \Sigma_{\mathcal{L}}(\mathcal{P}_0) = \mathrm{PGL}_2(\mathcal{O}_{\mathcal{L}}).
$$
Thus, we obtain the decomposition
$$
H^1(\mathcal{L}/\mathcal{K} , G) \, = \, \mathcal{H}_0 \, \cup \, \mathcal{H}_1,
$$
where $\mathcal{H}_0$ (resp., $\mathcal{H}_1$) is the image of the map $H^1(\mathcal{G} , \Sigma_{\mathcal{L}}(\mathcal{P}_0)) \to H^1(\mathcal{L}/\mathcal{K} , G)$ (resp., of the map $H^1(\mathcal{G} , \Sigma_{\mathcal{L}}(\mathcal{M})) \to H^1(\mathcal{L}/\mathcal{K} , G)$) --- we note that $\mathcal{P}_0 , \mathcal{M} \in \mathcal{T}$, hence are fixed by $\mathcal{G}$, making their stabilizers $\mathcal{G}$-invariant.

The set $\mathcal{H}_0$ is easy to identify. Let $\zeta \in Z^1(\mathcal{L}/\mathcal{K} , \mathrm{PGL}_2(\mathcal{O}_{\mathcal{L}}))$ be a 1-cocycle, and
set
$\mathcal{D} = {}_{\zeta} \mathrm{M}_2(\mathcal{K})$ and $\mathcal{A} = {}_{\zeta} \mathrm{M}_2(\mathcal{O})$, noting that $\mathrm{PGL}_2(\mathcal{O}_{\mathcal{L}})$ acts on $\mathrm{M}_2(\mathcal{O}_{\mathcal{L}})$ by automorphisms. Then $\mathcal{A} \otimes_{\mathcal{O}} \mathcal{O}_{\mathcal{L}} \simeq \mathrm{M}_2(\mathcal{O}_{\mathcal{L}})$, implying that $\mathcal{A}$ is an Azumaya $\mathcal{O}$-algebra, and $\mathcal{D} = \mathcal{A} \otimes_{\mathcal{O}} \mathcal{K}$. The latter means that $\mathcal{D}$ is unramified; in fact, every unramified quaternion $\mathcal{K}$-algebra that splits over $\mathcal{L}$ is obtained this way, i.e. it is a twist of $\mathrm{M}_2$ by a Galois cocycle on $\mathcal{G}$ with values in $\mathrm{PGL}_2(\mathcal{O}_{\mathcal{L}})$ (see, e.g., \cite{IR} for further details).
We note that by Hensel's Lemma (see \cite[\S 3.4, Lemme 2(ii)]{BT87}), the elements of $H^1(\mathcal{L}/\mathcal{K} , \mathrm{PGL}_2(\mathcal{O}_{\mathcal{L}}))$ are in bijection with those of $H^1(\ell/k , \mathrm{PGL}_2(\ell))$, which parametrize quaternion algebras over $k$ that split over $\ell$. On the other hand, the map $$H^1(\mathcal{L}/\mathcal{K} , \mathrm{PGL}_2(\mathcal{O}_{\mathcal{L}})) \to H^1(\mathcal{L}/\mathcal{K} , \mathrm{PGL}_2(\mathcal{L}))$$ is injective, which follows, for example, from a general result of Nisnevich (\cite[Th\'eor\`eme 4.2]{Nisn}). Thus, the map $\kappa_{\mathcal{L}/\mathcal{K}}$ yields a bijection between $\mathcal{H}_0$ and $Q(\mathcal{L}/\mathcal{K})_{\mathrm{ur}}$. (We note that this observation includes the almost tautological fact that there is a natural bijection between the isomorphisms classes of {\it unramified} quaternion $\mathcal{K}$-algebras that split over $\mathcal{L}$ and the isomorphism classes of quaternion $k$-algebras that split over $\ell$).

\vskip2mm

We now turn to the set
$\mathcal{H}_1$, which consists of cohomology classes represented by Galois cocycles on $\mathcal{G}$ with values in $\Sigma_{\mathcal{L}}(\mathcal{M}) = \Sigma \rtimes \langle w \rangle$, where
$$
\Sigma = \left\{ [A] \ \colon A = \left( \begin{array}{cc} a & b \\ c & d  \end{array} \right) \in
\mathrm{GL}_2(\mathcal{O}_{\mathcal{L}}), \ \ c \equiv 0(\mathrm{mod}\: \pi\mathcal{O}_{\mathcal{L}})\right\}
$$
and $w = [W]$, with $W = \left(  \begin{array}{cc} 0 & 1 \\ \pi & 1   \end{array}\right)$.

Let $T$ be the ``diagonal'' maximal $\mathcal{K}$-torus in $\mathrm{PGL}_2/\mathcal{K}$, and $\overline{T}$ be the ``diagonal" $k$-torus in $\mathrm{PGL}_2/k$.  Set $N = T \rtimes \langle w \rangle$ and $\overline{N} = \overline{T} \rtimes \langle \omega \rangle$, where $\omega$ is the automorphism of $\overline{T}$ of order two defined by $\omega(t) = t^{-1}$ for $t \in \overline{T}$.

For an element $a \in \mathcal{O}_{\mathcal{L}}$, we denote by $\bar{a}$ its residue $a(\mathrm{mod}\: \pi\mathcal{O}_{\mathcal{L}}) \in \ell$. Then for $A = \left( \begin{array}{cc} a & b \\ c & d \end{array}  \right) \in \mathrm{GL}_2(\mathcal{O}_{\mathcal{L}})$, sending $[A]$ to
$\left[ \left( \begin{array}{cc} \bar{a} & 0 \\ 0 & \bar{d} \end{array}  \right) \right]$ and $w$ to $\omega$ defines a homomorphism of $\mathcal{G}$-groups
$$
\lambda \colon \Sigma_{\mathcal{L}}(\mathcal{M}) \to \overline{N}(\ell)
$$
with kernel
$$
\Sigma_0 = \left\{ [A] \ \colon \ A = \left( \begin{array}{cc} a & b \\ c & d \end{array} \right) \in \mathrm{GL}_2(\mathcal{O}_{\mathcal{L}}), \ \ a,d \equiv 1(\mathrm{mod}\: \pi\mathcal{O}_{\mathcal{L}}), \ \ c \equiv 0(\mathrm{mod}\: \pi \mathcal{O}_{\mathcal{L}})     \right\}.
$$
The group $\Sigma_0$ has a filtration by normal subgroups of $\Sigma$:
$$
\Sigma_n = \left\{ [A] \ \colon \ A = \left( \begin{array}{cc} a & b \\ c & d \end{array} \right) \in \mathrm{GL}_2(\mathcal{O}_{\mathcal{L}}), \ \ a,d \equiv 1(\mathrm{mod}\: \pi^{n+1}\mathcal{O}_{\mathcal{L}}), \ \ b \equiv 0(\mathrm{mod}\: \pi^n\mathcal{O}_{\mathcal{L}})  \ \ c \equiv 0(\mathrm{mod}\: \pi^{n+1} \mathcal{O}_{\mathcal{L}})     \right\}
$$
$(n \geq 1)$. The successive quotients $\Sigma_n/\Sigma_{n+1}$ are naturally 3-dimensional $\ell$-vector spaces. Furthermore, the group $\Sigma$ acts on each quotient by $\ell$-linear transformations.
Hence, for any cocycle $\zeta \in Z^1(\mathcal{G} , \Sigma)$, we have
$$
H^1(\mathcal{G} , {}_{\zeta}\Sigma_n / {}_{\zeta}\Sigma_{n+1}) = 1 \ \ \text{for all} \ \ n \geq 0
$$
(cf. \cite[Corollary 4.2.7]{GS}), implying that $H^1(\mathcal{G} , {}_{\zeta}\Sigma_0) = 1$. It follows that for the map $$\lambda^1 \colon H^1(\mathcal{G} , \Sigma_{\mathcal{L}}(\mathcal{M})) \to H^1(\mathcal{G} , \overline{N}(\ell))$$ induced by $\lambda$ and any $\zeta \in H^1(\mathcal{G} , \Sigma(\mathcal{M}))$, the fiber $(\lambda^1)^{-1}(\lambda^1(\zeta))$ consists of a single element, proving that $\lambda^1$ is injective. On the other hand, by Hensel's Lemma (or by direct computation --- see below), the natural reduction map
\begin{equation}\label{E:RedA001}
H^1(\mathcal{L}/\mathcal{K} , N(\mathcal{O}_{\mathcal{L}})) \longrightarrow H^1(\ell/k , \overline{N}(\ell))
\end{equation}
is a bijection. Consequently, the map $H^1(\mathcal{L}/\mathcal{K} , N(\mathcal{O}_{\mathcal{L}})) \to H^1(\mathcal{L}/\mathcal{K} , \Sigma(\mathcal{M}))$ is also
a bijection, and hence $\mathcal{H}_1$ is the image of the map $H^1(\mathcal{L}/\mathcal{K} , N(\mathcal{O}_{\mathcal{L}})) \to H^1(\mathcal{L}/\mathcal{K} , G)$. So, to identify $\mathcal{H}_1$ we need to compute $H^1(\mathcal{L}/\mathcal{K} , N(\mathcal{O}_{\mathcal{L}}))$.

\vskip1mm

We have an exact sequence
$$
H^1(\mathcal{L}/\mathcal{K} , T(\mathcal{O}_{\mathcal{L}})) \longrightarrow H^1(\mathcal{L}/\mathcal{K} , N(\mathcal{O}_{\mathcal{L}})) \stackrel{\nu}{\longrightarrow} H^1(\mathcal{L}/\mathcal{K} , \langle w \rangle)
$$
of pointed sets.
Since $T \simeq \mathbb{G}_m$ and $\mathcal{L}/\mathcal{K}$ is unramified, we have $H^1(\mathcal{L}/\mathcal{K} , T(\mathcal{O}_{\mathcal{L}})) = 1$ (see, e.g. \cite[Chapter IV, Proposition 1.1]{MilneCFT}). Therefore,
the fiber $\nu^{-1}(e)$, where $e \in H^1(\mathcal{L}/\mathcal{K} , \langle w \rangle)$ is the trivial element, consists of the trivial class. Thus, $H^1(\mathcal{L}/\mathcal{K} , N(\mathcal{O}_{\mathcal{L}})) \setminus \{ 1 \}$ is precisely the fiber $\nu^{-1}(\nu(\varepsilon))$, where $\varepsilon \colon \mathcal{G} \to N(\mathcal{O}_{\mathcal{L}})$ is a cocycle defined by $\varepsilon(\sigma) = w$ for the nontrivial element $\sigma \in \mathcal{G}$. By
\cite[Ch. I, \S 5, Proposition 39]{Serre-GC},
there is a bijection
$$
H^1(\mathcal{L}/\mathcal{K} , {}_{\varepsilon} T(\mathcal{O}_{\mathcal{L}})) \longrightarrow \nu^{-1}(\nu(\varepsilon))
$$
that, on the level of cocycles, is given by
multiplication by $\varepsilon$: $\xi \mapsto \xi \varepsilon$. It is well known that the twisted torus ${}_{\varepsilon} T$ is isomorphic to the norm 1 torus $\mathrm{R}^{(1)}_{\mathcal{L}/\mathcal{K}}(\mathbb{G}_m)$ (see \cite[\S2.2, Example 1]{PlRR}). Consequently, we have $H^1(\mathcal{L}/\mathcal{K} , {}_{\varepsilon} T) \simeq \mathcal{K}^{\times}/N_{\mathcal{L}/\mathcal{K}}(\mathcal{L}^{\times})$ (see \cite[Lemma 2.22]{PlRR}), and similarly,
\begin{equation}\label{E:IsomA001}
H^1(\mathcal{L}/\mathcal{K} , {}_{\varepsilon}T(\mathcal{O}_{\mathcal{L}})) \simeq \mathcal{O}^{\times} / N_{\mathcal{L}/\mathcal{K}}(\mathcal{O}_{\mathcal{L}}^{\times}).
\end{equation}
(The same computation shows that if $\bar{\varepsilon} \colon \overline{\mathcal{G}} = \mathrm{Gal}(\ell/k) \to \langle \omega \rangle$ is the nontrivial homomorphism, viewed as a cocycle, then
$$
H^1(\ell/k , {}_{\bar{\varepsilon}} \overline{T}) \simeq k^{\times}/N_{\ell/k}(\ell^{\times}).
$$
It follows that the reduction map $H^1(\mathcal{L}/\mathcal{K} , {}_{\varepsilon} T(\mathcal{O}_{\mathcal{L}})) \to H^1(\ell/k , {}_{\bar{\varepsilon}} \overline{T})$ is surjective, so (\ref{E:RedA001}) is also surjective.)

Now, it is straightforward to check by direct computation that the isomorphism (\ref{E:IsomA001}) can be described explicitly as follows: given $a \in \mathcal{O}^{\times}$, the map $\zeta_a \colon \mathcal{G} \to {}_{\varepsilon}T(\mathcal{O}_{\mathcal{L}})$ defined by
$$
\zeta_a(1) = [I_2] \ \ \text{and} \ \ \zeta_a(\sigma) = \left[ \left( \begin{array}{cc} a & 0 \\ 0 & 1 \end{array}    \right) \right]
$$
is a  cocycle, and the correspondence $a \mapsto \zeta_a$ leads the isomorphism inverse to (\ref{E:IsomA001}).

Thus, every class in $H^1(\mathcal{L}/\mathcal{K} , N(\mathcal{O}_{\mathcal{L}}))$ is represented by a cocycle of the form $\zeta = \zeta_a \varepsilon$ that is defined by
$$
(\zeta_a \varepsilon)(\sigma) =  \left[ \left(  \begin{array}{cc} a & 0 \\ 0 & 1  \end{array} \right) \right] \cdot  \left[ \left(  \begin{array}{cc} 0 & 1 \\ \pi & 0   \end{array} \right) \right] =  \left[ \left(  \begin{array}{cc} 0 & a \\ \pi & 0  \end{array} \right) \right].
$$
Set $A = \left(  \begin{array}{cc} 0 & a \\ \pi & 0  \end{array} \right) $ and note that $A^2 = (\pi a)I_2$. The twisted algebra $\mathcal{D} = {}_{\zeta} \mathrm{M}_2(\mathcal{K})$ is then described as
$$
\mathcal{D} = \left\{ X \in \mathrm{M}_2(\mathcal{L}) \ \colon \ (\mathrm{Int}\: A)(\sigma(X)) = X      \right\}
$$
for the nontrivial element $\sigma \in \mathcal{G}$. The set $\tilde{\mathcal{L}}$ of diagonal matrices in $\mathcal{D}$ is a $\mathcal{K}$-subalgebra isomorphic to $\mathcal{L}$; furthermore, $A$ clearly lies in $\mathcal{D}$, with $\mathrm{Int}\: A$ inducing on $\tilde{\mathcal{L}}$ the nontrivial automorphism.
It follows that $\mathcal{D}$ is the quaternion algebra $(\mathcal{L}/\mathcal{K} , \pi a)$ obtained by the standard construction of cyclic algebras using the quadratic extension $\mathcal{L}/\mathcal{K}$ and the element $\pi a \in \mathcal{K}^{\times}$ (cf. \cite[\S2.5]{GS}).

To summarize, we have the following bijections
$$
\mathcal{O}^{\times} / N_{\mathcal{L}/\mathcal{K}}(\mathcal{O}_{\mathcal{L}}^{\times}) \simeq H^1(\mathcal{L}/\mathcal{K} , {}_{\varepsilon}T(\mathcal{O}_{\mathcal{L}})) \longrightarrow \nu^{-1}(\nu(\varepsilon)) = H^1(\mathcal{L}/\mathcal{K} , N(\mathcal{O}_{\mathcal{L}})) \setminus \{ 1 \},
$$
and the composition of these maps with $H^1(\mathcal{L}/\mathcal{K} ,  N(\mathcal{O}_{\mathcal{L}})) \to H^1(\mathcal{L}/\mathcal{K} , G)$ followed by $\kappa_{\mathcal{L}/\mathcal{K}}$ is given by
$$
a N_{\mathcal{L}/\mathcal{K}}(\mathcal{O}_{\mathcal{L}}^{\times}) \mapsto [(\mathcal{L}/\mathcal{K} , \pi a)],
$$
where $[ * ]$ denotes the corresponding isomorphism class of the quaternion algebra. It follows that $\kappa_{\mathcal{L}/\mathcal{K}}$ yields a bijection between $\mathcal{H}_1 \setminus \{ 1 \}$ and $Q(\mathcal{L}/\mathcal{K})_{\mathrm{ram}}$, as claimed.

\section{Action on the product of two trees}\label{S:Product}

In this section, we will make preparations for proving Theorems \ref{T:Main-sep} and \ref{T:Main-quadratic} in the next section by suitably adapting the approach described in \S \ref{S:BT} in the local situation. This adaptation also builds on the proof of the Raghunathan-Ramanathan Theorem given in \cite{ARR}.

Suppose $k$ is a field of characteristic $\neq 2$. Let $K= k(x)$ be the field of rational functions in one variable over $k$ and $A = k[x , x^{-1}]$ be the corresponding ring of Laurent polynomials. We let $v$ and $v^-$ be the discrete valuations of $K$ associated with $x$ and $x^{-1}$; the corresponding completions are then $K_v = k((x))$ and $K_{v^-} = k((x^{-1}))$ (fields of formal Laurent series), with the valuation rings $O := k[[x]]$ and $O^- = k[[x^{-1}]]$ (rings of formal power series). We note that
$$
A \bigcap O = k[x] \ \ \text{and} \ \ A \bigcap O^- = k[x^{-1}].
$$
Furthermore, let $\mathcal{T}$ and $\mathcal{T}^{-}$ be the Bruhat-Tits trees associated with $k((x))$ and $k((x^{-1}))$ --- see \S \ref{S:BT}. The goal of this section is to obtain information about the (diagonal) action of the group $\Gamma = \mathrm{PGL}_2(A)$ on the product $\mathcal{X} = \mathcal{T} \times \mathcal{T}^-$. More precisely, we will describe a fundamental domain for this action and identify the stabilizers of points in this fundamental domain. The reader should be advised that these results will be applied in the next section over a given quadratic extension $\ell$ of $k$  (i.e. to the field $L = \ell(x)$, the group $\Gamma_{\ell} = \mathrm{PGL}_2(\ell[x , x^{-1}])$, etc.), but in order to keep our notations simple, we will formulate the results of this section over $k$.

\subsection{Fundamental domain} To construct a fundamental domain for the action of $\Gamma = \mathrm{PGL}_2(A)$, where $A = k[x , x^{-1}]$, on $\mathcal{T} \times \mathcal{T}^-$, we consider a 2-dimensional vector space $W_0$ over $k$ with a fixed
basis $e_1 , e_2$. We will also use this basis as the standard basis in the vector spaces $W = W_0 \otimes_k k((x))$ and $W^- = W_0 \otimes_k k((x^{-1}))$ involved in the construction of $\mathcal{T}$ and $\mathcal{T}^-$ (cf. \S \ref{S:BT}). For $n \in \mathbb{Z}$, we consider the $O$-lattice
$$
\Lambda_n = Oe_1 + Ox^ne_2
$$
in $W$, and let $P_n = [\Lambda_n]$ denote the corresponding vertex in $\mathcal{T}$.
Similarly, we consider the $O^-$-lattices
$$
\Lambda^-_n = O^-e_1 + O^- (x^{-1})^n e_2 \ \ (n \in \mathbb{Z})
$$
in $W^-$, and the corresponding points $P^-_n = [\Lambda^-_n] \in \mathcal{T}^-$. We take $\mathcal{E}$ to be the edge $P_0P_1$ in $\mathcal{T}$, and let $\mathcal{A}^{-}$ denote the line (apartment) in $\mathcal{T}^-$ spanned by the vertices $\ldots, P^{-}_{-1}, P^-_0, P^-_1, \ldots$ 
(we note that $\mathcal{A}^-$ is the union of the edges $\mathcal{E}^-_n = P^-_n P^-_{n+1}$ for all $n \in \mathbb{Z}$).
Set $\Phi = \mathcal{E} \times \mathcal{A}^-$.

\begin{prop}\label{P:FD}
With the preceding notations, we have
$\Gamma \cdot \Phi = \mathcal{T} \times \mathcal{T}^-$, i.e. $\Phi$ is a \emph{weak} fundamental domain for the action of $\Gamma$ on $\mathcal{T} \times \mathcal{T}^-$.
\end{prop}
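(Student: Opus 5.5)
Take an arbitrary point $(\mathcal{P}, \mathcal{Q}) \in \mathcal{T} \times \mathcal{T}^-$; we must find $\gamma \in \Gamma$ with $\gamma \mathcal{P} \in \mathcal{E}$ and $\gamma \mathcal{Q} \in \mathcal{A}^-$. The plan has two stages: first move $\mathcal{P}$ into $\mathcal{E}$ using a $k[x]$-type subgroup, then move $\mathcal{Q}$ into $\mathcal{A}^-$ using the stabilizer of $\mathcal{E}$ in $\Gamma$.

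\textbf{Stage 1: move $\mathcal{P}$ into $\mathcal{E}$.} It suffices to treat vertices of $\mathcal{T}$, since an arbitrary point lies on an edge, and the edge argument is the same once the stabilizer of $P_0$ or $P_1$ is understood. The key input is the density of $A=k[x,x^{-1}]$ in $k((x))$: $k[x^{-1}] \subset A$ covers all polar parts. Concretely, an $O$-lattice has a basis in upper triangular Hermite form $\begin{pmatrix} x^a & f \\ 0 & x^b\end{pmatrix}$ with $f$ determined modulo $x^b O$. We may therefore take $f$ to be a polynomial in $x^{-1}$ times powers of $x$, so $f \in A$. Since $x$ is a unit in $A$, the matrix $\begin{pmatrix} x^a & f\\ 0& x^b\end{pmatrix}$ lies in $\mathrm{GL}_2(A)$. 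Hence every vertex is $\Gamma$-equivalent to $P_0$, and in fact $\Gamma$ acts transitively on vertices.

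\textbf{Stage 1, edges.} The stabilizer of $P_0$ in $\Gamma$ contains $\mathrm{PGL}_2(k[x])$. Its image in $\mathrm{PGL}_2(k)$, acting on the neighbours of $P_0$ (which form $\mathbb{P}^1(k)$), is transitive. So every edge is equivalent to $\mathcal{E}$, and every point $\mathcal{P}$ can be moved into $\mathcal{E}$.

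\textbf{Stage 2: move $\mathcal{Q}$ into $\mathcal{A}^-$.} Now assume $\mathcal{P} \in \mathcal{E}$. We use the pointwise stabilizer of $\mathcal{E}$ in $\Gamma$, which by Proposition~\ref{P:Stab1}(ii) contains the matrices
$$
\begin{pmatrix} a & b\\ c & d\end{pmatrix}, \qquad a,b,d \in k[x],\ c \in xk[x],\ \text{unit determinant}.
$$
We must show that this group $B$ acts on $\mathcal{T}^-$ with every orbit meeting $\mathcal{A}^-$. Write a vertex of $\mathcal{T}^-$ in Hermite form for $O^-$: it is $[O^- u_1 + O^- u_2]$ with
$$
u_1 = (x^{-1})^{m} e_1 + g\, e_2, \qquad u_2 = (x^{-1})^{n} e_2,
$$
after possibly swapping the roles of $e_1,e_2$. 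Here $g$ is determined modulo $(x^{-1})^n O^-$, hence can be chosen in $x^{-N}k[x]$, i.e. a Laurent polynomial. The plan is to kill $g$ using the unipotent matrices $\begin{pmatrix}1&0\\ c&1\end{pmatrix}$ with $c \in xk[x]$ and $\begin{pmatrix}1&b\\0&1\end{pmatrix}$ with $b\in k[x]$, together with the diagonal matrices $\mathrm{diag}(1,x^{j})$, which preserve $\mathcal{E}$ only up to shifting. This requires care: such diagonal matrices move $\mathcal{E}$, so only the full stabilizer may be used.

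\textbf{Main obstacle.} The leftover part of $g$ that the unipotents with $c\in xk[x]$ cannot absorb is the constant term in $k$. That term has to be handled by the swap to the other Hermite form, i.e. by the upper unipotents with $b\in k[x]$, which include the constants. Checking case by case that the two triangular forms together cover everything is the crux. If this fails, the fallback is a distance-minimization argument: choose $\gamma\in B$ minimizing the distance from $\gamma\mathcal{Q}$ to $\mathcal{A}^-$, and show that if $\gamma\mathcal{Q}$ is off $\mathcal{A}^-$, its projection onto $\mathcal{A}^-$ at $P^-_n$ is moved by some elementary $b$ or $c$ of degree tied to $n$, which fixes $P^-_n$ and rotates the branch back toward the apartment, decreasing the distance. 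This contradicts minimality.
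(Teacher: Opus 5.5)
Your two-stage reduction is the same as the paper's. Stage~1 (transitivity of $\Gamma$ on vertices and edges of $\mathcal{T}$) is fine. Since $B$ fixes $\mathcal{E}$ pointwise, everything then reduces to the equality $\mathcal{T}^- = B\mathcal{A}^-$. That equality is the entire content of the proposition (the paper gets it from the Birkhoff decomposition $\Gamma = BNB^-$), and your proposal does not prove it.

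The Hermite-form plan misjudges the obstruction. Take $u_1 = x^{-m}e_1 + g e_2$ and $u_2 = x^{-n}e_2$. The lower unipotents with $c \in xk[x]$ change $g$ by $c\,x^{-m} \in x^{1-m}k[x]$. What survives is $x^{-m}h$ with $h \in k[x^{-1}]$ of degree $< n-m$: up to $n-m$ coefficients, not a single constant term. Removing them in general takes several successive upper and lower corrections, so this is an iterative process, not a finite case check between two triangular forms.

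Your fallback organizes exactly that iteration, but its one substantive step is only asserted. Note that $\mathrm{Stab}_B(P^-_n)$ acts on the link of $P^-_n$ through a Borel subgroup of $\mathrm{PGL}_2(k)$, so it always fixes one of the two apartment neighbours of $P^-_n$. You must show it is transitive on all the remaining neighbours; otherwise a branch at $P^-_n$ could not be folded into $\mathcal{A}^-$.

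This transitivity does hold, and with it your fallback becomes a complete proof by a genuinely different route.
\begin{itemize}
\item For $n \ge 0$ and $\beta \in k$, the element $\begin{pmatrix}1 & \beta x^n\\ 0 & 1\end{pmatrix}$ lies in $B$ and preserves $\Lambda^-_n$. In the basis $e_1, x^{-n}e_2$ it acts on $\Lambda^-_n/x^{-1}\Lambda^-_n$ by $\begin{pmatrix}1&\beta\\0&1\end{pmatrix}$. The line through $e_1$ corresponds to $P^-_{n+1}$ and the line through $x^{-n}e_2$ to $P^-_{n-1}$. So these elements fix $P^-_{n+1}$ and permute the other neighbours, $P^-_{n-1}$ among them, simply transitively.
\item For $n<0$ and $\gamma \in k$, the element $\begin{pmatrix}1&0\\ \gamma x^{|n|}&1\end{pmatrix}$ lies in $B$ precisely because $|n| \ge 1$. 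In the basis $x^{n}e_1, e_2$ of $x^{n}\Lambda^-_n$ it acts by $\begin{pmatrix}1&0\\ \gamma&1\end{pmatrix}$. It therefore fixes $P^-_{n-1}$ and permutes the other neighbours, $P^-_{n+1}$ among them, simply transitively.
\end{itemize}
Now let $\mathcal{Q}\notin\mathcal{A}^-$, let $P^-_n$ be its nearest point on $\mathcal{A}^-$, and set $d = d(\mathcal{Q},\mathcal{A}^-)$. One such element sends the first edge of the geodesic from $P^-_n$ to $\mathcal{Q}$ into $\mathcal{A}^-$. This replaces $\mathcal{Q}$ by a point at distance at most $\max(d-1,0)$ from $\mathcal{A}^-$, without moving the $\mathcal{T}$-coordinate, and induction finishes.

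Compared with the paper: the paper derives $\mathcal{T}^- = B\mathcal{A}^-$ in one line from $\Gamma = BNB^-$, together with $B^-\mathcal{E}^-_{-1} = \mathcal{E}^-_{-1}$, $\Gamma\mathcal{E}^-_{-1} = \mathcal{T}^-$ and $N\mathcal{E}^-_{-1} = \mathcal{A}^-$. Your folding argument is elementary and self-contained. It is in effect a geometric proof of the part of the Birkhoff decomposition that is needed, with the folds always pushing $\mathcal{Q}$ toward the edge $\mathcal{E}^-_{-1}$. The paper's route is shorter given the twin $BN$-pair machinery, and it is the form that generalizes to other Chevalley groups.
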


This is a formal consequence of \cite[Proposition 3(1)]{Abr} (see also \cite[Proposition 5]{Abr}),  but we would like to provide additional details and to put this result in a more general context. Let $\mathbb{G}$ be a simply connected Chevalley group over $k$, let $\mathbb{T}$, $\mathbb{N}$, and $\mathbb{B}$ and $ \mathbb{B}_-$ be a maximal $k$-split torus, its normalizer, and a pair of opposite Borel subgroups containing $\mathbb{T}$. We consider the group homomorphisms
$$
\rho \colon \mathbb{G}(k[x]) \to \mathbb{G}(k) \ \ \text{and} \ \ \rho^- \colon \mathbb{G}(k[x^{-1}]) \to \mathbb{G}(k)
$$
induced by the $k$-algebra homomorphisms $k[x] \to k$ and $k[x^{-1}] \to k$ sending $x$ (resp., $x^{-1}$) to $0$. Set $B = \rho^{-1}(\mathbb{B}(k))$, $B^- = (\rho^-)^{-1}(\mathbb{B}_-(k))$ and $N = \mathbb{N}(A)$. Then the group $\mathbb{G}(A)$ has two $BN$-pairs $(B , N)$ and $(B^- , N)$ --- cf. \cite{MT}, \cite{M}. In fact, by \cite[Lemma 5]{Abr}, $(B, B^-, N)$ is a twin $BN$-pair in $\mathbb{G}(A)$.
(We refer the reader to \cite[\S6.3]{AbBr} for a detailed discussion of these concepts.) The main consequence of this needed for the proof of Proposition \ref{P:FD} is the following version of the Birkhoff decomposition:
\begin{equation}\label{E:BD1}
\mathbb{G}(A) = B N B^-.
\end{equation}
(see \cite[Proposition 6.81]{AbBr} or \cite[\S 3.2]{Tits}).

We now specialize this result to $G = \mathrm{PGL}_2$. Using (\ref{E:BD1}) for $\mathrm{SL}_2$, one easily derives the following decomposition for $\Gamma = G(A)$:
\begin{equation}\label{E:BD2}
\Gamma = B N B^-
\end{equation}
where
$$
B = \{ [A] \ \vert \ A = \left(\begin{array}{cc} a & b \\ c & d \end{array}   \right) \in \mathrm{GL}_2(k[x]) \ \ \text{such that} \ \ c \equiv 0 (\mathrm{mod}\: xk[x]) \},
$$
$$
B^- = \{ [A] \ \vert \ A = \left(\begin{array}{cc} a & b \\ c & d \end{array}   \right) \in \mathrm{GL}_2(k[x^{-1}]) \ \ \text{such that} \ \ b \equiv 0 (\mathrm{mod}\: x^{-1}k[x^{-1}]) \},
$$
and $N = T \cup sT$, with $s = \left[ \begin{array}{cc} 0 & 1 \\ 1 & 0 \end{array} \right]$ and
$$
T = \{ [A] \ \vert \ A \in \mathrm{GL}_2(A) \ \ \text{is a diagonal  matrix} \}.
$$
We have
$$
B \mathcal{E} = \mathcal{E} \ \ \text{and} \  B^- \mathcal{E}^-_{-1} = \mathcal{E}^-_{-1}.
$$
(see Lemmas \ref{L:Stab1} and \ref{L:Stab2} below). On the other hand, it is easy to see that the closure of $\Gamma$ in $G(k((x)))$ contains $T \cdot \mathrm{PSL}_2(k((x)))$, implying that
$$
\Gamma \mathcal{E} = (T \cdot \mathrm{PSL}_2(k((x)))) \mathcal{E} = \mathcal{T},
$$
and similarly, $\Gamma \mathcal{E}^-_{-1} = \mathcal{T}^-$. Since $N \mathcal{E}^-_{-1} = \mathcal{A}^-$, it follows from (\ref{E:BD2}) that
$$
\mathcal{T}^- = \Gamma \mathcal{E}^-_{-1} = B \mathcal{A}^-.
$$
Now, let $(P , P^-) \in \mathcal{T} \times \mathcal{T}^-$. We can find $\gamma \in \Gamma$ so that $\gamma^{-1} P \in \mathcal{E}$. Furthermore, there exists $\beta \in B$ such that $\beta^{-1}(\gamma^{-1} P^-) \in \mathcal{A}^-$. Noting that $\beta^{-1}(\gamma^{-1} P) = \gamma^{-1} P$, we obtain
$$
(\gamma \beta)^{-1} (P , P^-) = ((\gamma^{-1} P) , \beta^{-1}(\gamma^{-1} P^-)) \in \Phi,
$$
proving Proposition \ref{P:FD}. \hfill $\Box$

\begin{remark}
Here is another construction of a fundamental domain. Let $\mathcal{S}$ be the union of all edges in $\mathcal{T}$ emanating from $P_0$ (the ``star'' of $P_0$), and let $\mathcal{A}^-_+$ be the ray in $\mathcal{T}^-$ spanned by the vertices $P_0^-, P_1^-, \ldots $. Then $\Psi = \mathcal{S} \times \mathcal{A}^-_+$ is a weak fundamental domain for the action of $\Gamma$ on $\mathcal{T} \times \mathcal{T}^-$, i.e. $\Gamma \cdot \Psi = \mathcal{T} \times \mathcal{T}^-$.
This is an easy consequence of the following facts:
\begin{itemize}

\item $\Gamma \mathcal{E} = \mathcal{T}$;

\item $\mathcal{S} = G(k[x]) \mathcal{E}$;

\item $G(k[x]) \mathcal{A}^-_+ = \mathcal{T}^-$.
\end{itemize}
The first fact was mentioned above in the proof of Proposition \ref{P:FD}, the second one is immediate, and the third one is well-known and is established,
for example, in \cite[Ch. II, \S 1.6]{Serre-Trees} (we note that it is also a consequence of a certain ``Birkhoff decomposition"). The advantage of the fundamental domain $\Phi$ over $\Psi$ is that the stabilizers of points of $\Phi$ are easier to compute --- we will see these computations in the next subsection. On the other hand, while the fundamental domain $\Phi$ is specific to the group of points over the ring of Laurent polynomials, fundamental domains similar to $\Psi$ can be constructed over more general localizations of the polynomial ring.

\end{remark}

\vskip1mm

\subsection{Stabilizers} Given a point $\mathcal{P} = (P , P^-) \in \mathcal{T} \times \mathcal{T}^-$, its stabilizer for the action of $\Gamma$ will be denoted $\Sigma(\mathcal{P})$ or $\Sigma(P , P^-)$. The goal of this subsection is to identify the stabilizers $\Sigma(\mathcal{P})$ of points $\mathcal{P}$  in the fundamental domain $\Phi = \mathcal{E} \times \mathcal{A}^-  \subset \mathcal{T} \times \mathcal{T}^-$ constructed in Proposition \ref{P:FD}. It turns out that we have the following.
\begin{prop}\label{P:Stab-FD}
 For $\mathcal{P} \in \Phi$, the stabilizer $\Sigma(\mathcal{P})$ is one of the following groups:

\begin{itemize}

\item[{\rm (i)}] $\left\{ [A] \ \vert \ A = \left( \begin{array}{cc} a & b \\ 0 & d \end{array} \right) \ \ \text{with} \ \ a , d \in k^{\times}, \ b \in \mathcal{V} \right\}$ or  $\left\{ [A] \ \vert \ A = \left( \begin{array}{cc} a & 0 \\ c & d \end{array} \right) \ \ \text{with} \ \  a , d \in k^{\times}, \ c  \in \mathcal{V} \right\},$ where $\mathcal{V} \subset k[x , x^{-1}]$ is a finite-dimensional $k$-vector subspace that has a basis consisting of powers of $x$;

\vskip1mm

\item[{\rm (ii)}] $\mathrm{PGL}_2(k)$;

\vskip1mm

\item[{\rm (iii)}] $[t] \cdot \mathrm{PGL}_2(k) \cdot [t]^{-1}$,  \  where \ $t = \mathrm{diag}(1 , x)$;

\vskip1mm

\item[{\rm (iv)}] $\left\{ [A] \ \vert \ A = \left(  \begin{array}{cc} a & 0 \\ 0 & d \end{array}    \right) \ \ \text{with} \ \ a,d \in k^{\times}  \right\} \rtimes \langle [g] \rangle,$ where $g = \left( \begin{array}{cc} 0 & 1 \\ x & 0 \end{array} \right)$.
\end{itemize}
\end{prop}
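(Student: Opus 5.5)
The plan is to write $\Sigma(P,P^-)=\Sigma_{\mathcal{T}}(P)\cap\Sigma_{\mathcal{T}^-}(P^-)\cap\Gamma$. Here the local stabilizers are those described in Proposition \ref{P:Stab1}, applied to $\mathcal{K}=k((x))$ with $\pi=x$ and to $\mathcal{K}=k((x^{-1}))$ with $\pi=x^{-1}$. I then intersect them inside $\mathrm{PGL}_2(A)$, using $A\cap O=k[x]$ and $A\cap O^-=k[x^{-1}]$. A preliminary remark is that for $[M]\in\Gamma$ we may take $M\in \mathrm{GL}_2(A)$, so $\det M\in A^\times=k^\times x^{\mathbb{Z}}$. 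After rescaling $M$ by a power of $x$ (which does not change $[M]$), we may assume $\det M\in k^\times$ or $\det M\in k^\times x$.

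The case analysis proceeds as follows.

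\emph{First factor.} If $P$ lies in $\mathcal{E}$, then its stabilizer in $G(k((x)))$ is one of three groups: $\Sigma_0=\mathrm{PGL}_2(O)$ (if $P=P_0$), $\Sigma_1=[t]\mathrm{PGL}_2(O)[t]^{-1}$ (if $P=P_1$), or the Iwahori group $\Sigma_0\cap\Sigma_1$. The exception is the midpoint $\mathcal{M}$, where the stabilizer is $(\Sigma_0\cap\Sigma_1)\rtimes\langle[g]\rangle$ with $g=\begin{pmatrix}0&1\\x&0\end{pmatrix}$.

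\emph{Second factor.} Similarly, $P^-$ is a vertex $P^-_m$ with stabilizer $[t^{-m}]\mathrm{PGL}_2(O^-)[t^{-m}]^{-1}$ (noting $\mathrm{diag}(1,x^{-m})$). Otherwise $P^-$ is an interior non-midpoint of $\mathcal{E}^-_m$, or the midpoint of $\mathcal{E}^-_m$, whose stabilizer acquires the involution $\begin{pmatrix}0&1\\x^{-2m-1}&0\end{pmatrix}$.

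For $M=\begin{pmatrix}a&b\\c&d\end{pmatrix}$ with determinant in $k^\times$, the entries of an element in the intersection satisfy simultaneous constraints. From the $x$-side, $a,d\in k[x]$ and $b\in k[x]$ (or $x^{-1}k[x]$ at $P_1$), while $c\in k[x]$ or $xk[x]$. From the $x^{-1}$-side, $a,d\in k[x^{-1}]$, $c\in x^{-m}k[x^{-1}]$-type conditions, and $b\in x^{m}k[x^{-1}]$-type conditions. Hence $a,d\in k$, and $b$ and $c$ lie in spans of finitely many monomials $x^i$ determined by $m$.

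Generically, one of $b$ or $c$ is forced into $\{0\}$ (for instance $c\in xk[x]\cap x^{-m}k[x^{-1}]$ is zero once $m\le 0$), yielding the triangular groups of type (i) with $\mathcal{V}$ a monomial span. When both off-diagonal spaces reduce to $k$ simultaneously, we get either $\mathrm{PGL}_2(k)$ (the pair $P_0,P^-_0$, type (ii)) or its conjugate by $t$ (the pair $P_1,P^-_1$, after conjugating by $t$, type (iii)). One should check that $\det\in k^\times$ together with $a,d\in k$ forces $a,d\in k^\times$ whenever $bc=0$.

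The main obstacle is the bookkeeping for the determinant-$x$ elements and the midpoint cases. The approach I would try first is the observation that an element $[M]$ with $\det M\in k^\times x^{\text{odd}}$ acts on $\mathcal{T}$ by an odd-type (type-reversing) automorphism. Such an element can fix a point of $\mathcal{E}$ only if that point is $\mathcal{M}$, and can fix a point of $\mathcal{T}^-$ only if it is the midpoint of some $\mathcal{E}^-_m$. This reduces to the pair $(\mathcal{M},\text{midpoint of }\mathcal{E}^-_m)$. Writing such $M$ as $g\cdot N$ with $N$ in the Iwahori-type intersection, the off-diagonal constraints from both sides are compatible only for the specific $m$ where the two involutions agree, namely $x^{-2m-1}$ and $x$ are proportional up to scaling, i.e. $m=-1$. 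In that case $g$ itself stabilizes both points, and the determinant-$k^\times$ part is the diagonal torus. This gives type (iv). For other $m$ the stabilizer falls back into case (i).

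In each case I would verify the reverse inclusion directly by checking that the listed matrices preserve the relevant lattices up to scalar. Finally, I would record which of (i)–(iv) occurs for each position of $(P,P^-)$, in a table indexed by $m$.
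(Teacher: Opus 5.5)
Your proposal is correct and follows essentially the paper's route. You intersect the $\Gamma$-stabilizers coming from the two trees case by case, using $A\cap O=k[x]$ and $A\cap O^-=k[x^{-1}]$. You then use the parity of the determinant to see that odd-determinant elements can only occur at the pair of midpoints of $\mathcal{E}$ and $\mathcal{E}^-_m$, and only for $m=-1$, where $g=g^-_{-1}$ gives type (iv); your type-reversal observation is exactly the paper's ``determinant considerations.''

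When you write out the table, correct two sign slips in your illustrations. First, $xk[x]\cap x^{-m}k[x^{-1}]$ vanishes exactly when $m\ge 0$; for instance, it equals $kx$ when $m=-1$. Second, type (iii) occurs at $(P_1,P^-_{-1})$, since $[t]P^-_0=P^-_{-1}$. The pair $(P_1,P^-_1)$ instead has an upper triangular stabilizer of type (i), with $b\in kx^{-1}+k+kx$.
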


The proof of the proposition will reveal which points $\mathcal{P} \in \Phi$ yield each type of stabilizer.

\vskip5mm

As above, for $n \in \mathbb{Z}$, we consider the $O$-lattice
$$
\Lambda_n = O e_1 + O x^n e_2
$$
in $W$ with corresponding vertex $P_n = [\Lambda_n] \in \mathcal{T}$, and similarly, the $O^-$-lattice
$$
\Lambda^-_n = O^- e_1 + O^-(x^{-1})^n e_2
$$
in $W^-$ with corresponding vertex $P^-_n = [\Lambda^-_n] \in \mathcal{T}^-$. Clearly, the stabilizer of $P_0$ is
\begin{equation}\label{E:Stab1}
\mathrm{Stab}_{\Gamma}(P_0) = \{ [A] \ \vert \ A \in \mathrm{GL}_2(k[x]) \}.
\end{equation}
Set
$$
t = \mathrm{diag}(1 , x) \ \ \text{and} \ \ g = \left( \begin{array}{cc} 0 & 1 \\ x & 0 \end{array}   \right) \ \ \ \in \ \Gamma.
$$
Then $t(\Lambda_0) = \Lambda_1$, hence $[t](P_0) = P_1$, and
$$
g(\Lambda_0) = O x e_2 + O e_1 = \Lambda_1 \ \ \text{and} \ \ g(\Lambda_1) = O x e_2 + O x_1 e_1 = x \Lambda_0,
$$
hence $[g](P_i) = P_{1 - i}$ for $i = 0, 1$. Consequently,
\begin{equation}\label{E:Stab2}
\mathrm{Stab}_{\Gamma}(P_1) = [t] \cdot \mathrm{Stab}_{\Gamma}(P_0) \cdot [t]^{-1} = \left\{ [A] \ \vert \ A = \left( \begin{array}{cc} a & x^{-1} b \\ x c & d  \end{array}  \right) \ \ \text{with} \ \ \left( \begin{array}{cc} a & b \\ c & d  \end{array}  \right) \in \mathrm{GL}_2(k[x])  \right\}
\end{equation}
It follows that
\begin{equation}\label{E:Stab3}
F := \mathrm{Stab}_{\Gamma}(P_0) \bigcap \mathrm{Stab}_{\Gamma}(P_1) = \{ [A] \ \vert \ A = \left( \begin{array}{cc} a & b \\ c & d \end{array} \right) \in \mathrm{GL}_2(k[x]) \ \ \text{such that} \ \ c \equiv 0 (\mathrm{mod}\: \ xk[x]) \}.
\end{equation}
Let $M$ be the midpoint of $\mathcal{E} = P_0P_1$. Then for any $P \in \mathcal{E} \setminus \{ M, P_0, P_1 \}$, the stabilizer is $\mathrm{Stab}_{\Gamma}(P) = F$. If $h \in \mathrm{Stab}_{\Gamma}(M)$, then either $h \in F$ or $h(P_i) = P_{1-i}$ for $i = 0, 1$, and then $h \in F \cdot [g]$. Since $[g]^2 = 1$, we see that $\mathrm{Stab}_{\Gamma}(M) = F \rtimes \langle [g] \rangle$. Thus, we have proved the following.

\begin{lemma}\label{L:Stab1}
The stabilizers of the points $P_0$ and $P_1$ for the action of $\Gamma$ on $\mathcal{T}$ are given by equations (\ref{E:Stab1}) and (\ref{E:Stab2}), respectively. Let $M$ be the midpoint of $\mathcal{E}$. Then the stabilizer of any point $P \in \mathcal{E} \setminus \{M, P_0, P_1\}$ is $F$ given by (\ref{E:Stab3}). Finally, $\mathrm{Stab}_{\Gamma}(M) = F \rtimes \langle [g] \rangle$. \end{lemma}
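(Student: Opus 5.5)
The lemma is essentially a bookkeeping statement, and I would prove it in the order it is stated: first $\mathrm{Stab}_{\Gamma}(P_0)$, then transport to $P_1$ by conjugation, then intersect, and finally treat the midpoint using the involution $[g]$.

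\emph{Stabilizer of $P_0$.} Let $[A]\in\Gamma$ with $A\in \mathrm{GL}_2(k[x,x^{-1}])$ fix $P_0$. Then $A(\Lambda_0)=\lambda\Lambda_0$ for some $\lambda\in k((x))^{\times}$, so $\lambda^{-1}A\in \mathrm{GL}_2(O)$, as in the proof of Proposition \ref{P:Stab1}(i).
\begin{itemize}
\item Write $\lambda=x^m u$ with $u\in O^{\times}$ and replace $A$ by $x^{-m}A$. This does not change $[A]$ and keeps $A$ in $\mathrm{GL}_2(k[x,x^{-1}])$, so we may assume $A\in \mathrm{GL}_2(O)$, since $u$ is a unit of $O$.
\item The entries of $A$ then lie in $k[x,x^{-1}]\cap O=k[x]$.
\item $\det A$ is a unit of $k[x,x^{-1}]$, i.e.\ $c\,x^j$ with $c\in k^{\times}$, and it is also a unit of $O$. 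Hence $j=0$, and $A\in \mathrm{GL}_2(k[x])$.
\end{itemize}
The reverse inclusion is obvious. This normalization of the scalar $\lambda$ is the only point needing care. The subtlety is that the scalar lives in $k((x))$ while we must stay inside $k[x,x^{-1}]$; it is resolved because every element of $k((x))^{\times}$ is a power of $x$ times a unit of $O$.

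\emph{Stabilizer of $P_1$ and their intersection.} Since $t=\mathrm{diag}(1,x)\in \mathrm{GL}_2(k[x,x^{-1}])$ and $t(\Lambda_0)=\Lambda_1$, we get $\mathrm{Stab}_\Gamma(P_1)=[t]\,\mathrm{Stab}_\Gamma(P_0)\,[t]^{-1}$. Computing $tAt^{-1}$ entrywise gives (\ref{E:Stab2}).

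For the intersection, take $[A]$ in both stabilizers. Normalize $A\in \mathrm{GL}_2(k[x])$, and let $B$ be the matrix of the form in (\ref{E:Stab2}) with $[A]=[B]$. Then $A=\mu B$ for a scalar $\mu$, which is a unit of $k[x,x^{-1}]$. Comparing determinants, both of which lie in $k^{\times}$, forces $\mu\in k^{\times}$. Reading off the $(2,1)$ entry then gives $c\in xk[x]$. The converse inclusion is immediate, and this gives (\ref{E:Stab3}).

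\emph{Interior points and the midpoint.} The action on the tree is by isometries, and any $h\in\Gamma$ mapping a point of the open edge into $\mathcal{E}$ either fixes both endpoints or swaps them. So for $P\in\mathcal{E}\setminus\{M,P_0,P_1\}$, an element fixing $P$ cannot swap the endpoints, since that would move $P$ to its mirror image. It therefore fixes both endpoints, and $\mathrm{Stab}_\Gamma(P)=F$.

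An element fixing $M$ either fixes both endpoints or interchanges them. We checked that $[g]$ interchanges $P_0$ and $P_1$ and satisfies $[g]^2=1$, since $g^2=xI_2$. Hence any swapping element lies in $F[g]$, and $F$ is normal of index $2$ in $\mathrm{Stab}_\Gamma(M)$. This gives $\mathrm{Stab}_\Gamma(M)=F\rtimes\langle[g]\rangle$.
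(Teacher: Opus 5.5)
Your proof is correct and takes the paper's route. You compute $\mathrm{Stab}_{\Gamma}(P_0)$, conjugate by $[t]$ to get $\mathrm{Stab}_{\Gamma}(P_1)$, and intersect to obtain $F$; for the midpoint you use that an element fixing it either fixes or swaps the endpoints, together with the involution $[g]$. Beyond the paper, you spell out the scalar normalizations ($\lambda = x^m u$ with $u \in O^{\times}$, and $\mu \in k^{\times}$ by comparing determinants) that the paper passes over with ``clearly'' and ``it follows that''.
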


Next, we will establish a similar, but more general, result for $\mathcal{T}^-$. Again, we have
$$
\mathrm{Stab}_{\Gamma}(\Lambda^-_0) = \mathrm{PGL}_2(k[x^{-1}]).
$$
Set
$$
t^-_n = \mathrm{diag}(1 , x^{-n}) \ \ \text{and} \ \ g^-_n = \left( \begin{array}{cc} 0 & 1 \\ x^{-(2n+1)} & 0 \end{array} \right).
$$
Then $t^-_n(\Lambda^-_0) = \Lambda^-_n$, hence $[t^-_n](P^-_0) = P^-_n$, and
$$
g^-_n(\Lambda^-_n) = O^- x^{-(2n+1)} e_2 + O^- x^{-n} e_1 = x^{-n} \Lambda^-_{n+1}
$$
and
$$
g^-_n(\Lambda^-_{n+1}) = O^- x^{-(2n+1)} e_2 + O^- x^{-(n+1)} e_1 = x^{-(n+1)} \Lambda^-_n,
$$
hence $[g^-_n](P^-_i) = P^-_{(2n+1) - i}$ for $i = n, n+1$. So,
\begin{equation}\label{E:Stab4}
\mathrm{Stab}_{\Gamma}(P^-_n) = [t^-_n] \cdot \mathrm{Stab}_{\Gamma}(P^-_0) \cdot [t^-_n]^{-1} =
\end{equation}
$$
\left\{ [A] \ \vert \ A = \left( \begin{array}{cc} a & x^n b \\ x^{-n} c & d \end{array} \right) \ \ \text{with} \ \ \left( \begin{array}{cc} a & b \\ c & d \end{array} \right) \in \mathrm{GL}_2(k[x^{-1}])   \right\}.
$$
Since $x^n k[x^{-1}] \subset x^{n+1} k[x^{-1}]$ and $x^{-n} k[x^{-1}] \supset x^{-(n+1)} k[x^{-1}]$ for all $n \in \mathbb{Z}$, it follows that
\begin{equation}\label{E:Stab5}
F^-_n := \mathrm{Stab}_{\Gamma}(P^-_n) \bigcap \mathrm{Stab}_{\Gamma}(P^-_{n+1}) =
\end{equation}
$$
\left\{ [A] \ \vert \ A = \left( \begin{array}{cc}  a & b \\ c & d    \end{array} \right) \ \ \text{with} \ \ a,d \in k[x^{-1}], \ b \in x^n k[x^{-1}], \ c \in x^{-(n+1)} k[x^{-1}] \ \ \text{and} \ \ \det A \in k^{\times}    \right\}
$$
Let $M^-_n$ be the midpoint of $\mathcal{E}^-_n = P^-_n P^-_{n+1}$. Then for any point $P \in \mathcal{E}^-_n \setminus \{ M^-_n, P^-_n, P^-_{n+1} \}$, the stabilizer is $\mathrm{Stab}_{\Gamma}(P) = F^-_n$. On the other hand, if $h \in \mathrm{Stab}_{\Gamma}(M^-_n)$, then either $h \in F^-_n$ or $h(P^-_i) = P_{(2n+1) - i}$ for $i = n, n+1$. It follows that
$$
\mathrm{Stab}_{\Gamma}(M^-_n) = F^-_n \rtimes \langle [g^-_n] \rangle.
$$

\begin{lemma}\label{L:Stab2}
For any $n \in \mathbb{Z}$, the stabilizer of $P^-_n$ is given by the equation (\ref{E:Stab4}).  Let $M^-_n$ be the midpoint of $\mathcal{E}^- = P^-_n P^-_{n+1}$.  Then the stabilizer of any point $P \in \mathcal{E}^-_n \setminus \{ M^-_n , P^-_n, P^-_{n+1} \}$ is $F^-_n$ given by (\ref{E:Stab5}). Finally,  $\mathrm{Stab}_{\Gamma}(M^-_n) = F^-_n \rtimes \langle [g^-_n] \rangle$.
\end{lemma}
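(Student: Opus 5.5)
The plan is to argue directly from the lattice description of $\mathcal{T}^-$, in the same way as for Lemma \ref{L:Stab1}. The main work is the computation of $\mathrm{Stab}_{\Gamma}(P^-_0)$. The remaining statements then follow by conjugating with $[t^-_n]$ and by elementary facts about isometries of a tree.

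\emph{Step 1: $\mathrm{Stab}_{\Gamma}(P^-_0)=\mathrm{PGL}_2(k[x^{-1}])$.} The inclusion $\supseteq$ is clear. Conversely, let $[A]\in\Gamma$ with $A\in \mathrm{GL}_2(k[x,x^{-1}])$ satisfy $A(\Lambda^-_0)=\lambda\Lambda^-_0$ for some $\lambda\in k((x^{-1}))^{\times}$.
\begin{itemize}
\item Write $\lambda = u x^{j}$ with $u\in (O^-)^{\times}$ and $j\in\mathbb{Z}$. Since $u\Lambda^-_0=\Lambda^-_0$, we may replace $\lambda$ by $x^{j}$.
\item Then $x^{-j}A$ preserves $\Lambda^-_0$, so $x^{-j}A\in \mathrm{GL}_2(O^-)$.
\item The entries of $x^{-j}A$ lie in $k[x,x^{-1}]\cap O^- = k[x^{-1}]$.
\item Its determinant lies in $(O^-)^{\times}\cap k[x,x^{-1}]^{\times}=(O^-)^{\times}\cap k^{\times}x^{\mathbb{Z}}=k^{\times}$.
\end{itemize}
Hence $[A]=[x^{-j}A]\in\mathrm{PGL}_2(k[x^{-1}])$. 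I expect this scalar normalization to be the main point needing care. The issue is that $\lambda$ is only a Laurent series, while $A$ has Laurent polynomial entries. Splitting off the unit $u$ resolves this, because multiplying a lattice by a unit of $O^-$ does not change it.

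\emph{Step 2: formula (\ref{E:Stab4}).} Since $t^-_n\in\Gamma$ and $t^-_n(\Lambda^-_0)=\Lambda^-_n$, we get $\mathrm{Stab}_\Gamma(P^-_n)=[t^-_n]\,\mathrm{Stab}_\Gamma(P^-_0)\,[t^-_n]^{-1}$. Conjugating a matrix $\left(\begin{smallmatrix} a&b\\ c&d\end{smallmatrix}\right)$ by $\mathrm{diag}(1,x^{-n})$ gives exactly the displayed shape in (\ref{E:Stab4}).

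\emph{Step 3: formula (\ref{E:Stab5}) for $F^-_n$.} Take $[A]=[B]$ with $A$ of the form (\ref{E:Stab4}) for $n$ and $B$ of the same form for $n+1$.
\begin{itemize}
\item We have $A=\mu B$ with $\mu\in k[x,x^{-1}]^{\times}=k^{\times}x^{\mathbb{Z}}$.
\item Both $\det A$ and $\det B$ lie in $k^{\times}$, so $\mu^2\in k^{\times}$, which forces $\mu\in k^{\times}$.
\item So we may take $A=B$ and intersect the entry conditions entrywise. The diagonal entries lie in $k[x^{-1}]$.
\item For $b$: $x^{n}k[x^{-1}]\cap x^{n+1}k[x^{-1}]=x^{n}k[x^{-1}]$.
\item For $c$: $x^{-n}k[x^{-1}]\cap x^{-(n+1)}k[x^{-1}]=x^{-(n+1)}k[x^{-1}]$.
\end{itemize}
This gives the inclusion $\subseteq$ in (\ref{E:Stab5}). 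For the reverse inclusion, a matrix of the displayed shape visibly satisfies both sets of conditions.

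\emph{Step 4: non-vertex points of $\mathcal{E}^-_n$.} Elements of $\Gamma$ act on $\mathcal{T}^-$ by simplicial isometries. Suppose $h$ fixes a point $P$ of $\mathcal{E}^-_n$ other than $M^-_n$ and the endpoints. Then $h$ maps the edge $\mathcal{E}^-_n$ to itself. It cannot swap the endpoints, since a swap moves every point except the midpoint. So $h$ fixes $P^-_n$ and $P^-_{n+1}$, i.e. $h\in F^-_n$; the converse inclusion is clear.

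\emph{Step 5: the midpoint $M^-_n$.} An element $h$ stabilizing $M^-_n$ either fixes both endpoints, in which case $h\in F^-_n$, or swaps them. The element $[g^-_n]$ swaps them, by the lattice computation preceding the lemma. So in the second case $h[g^-_n]^{-1}\in F^-_n$. Moreover $(g^-_n)^2=x^{-(2n+1)}I_2$, so $[g^-_n]$ has order $2$. Finally, $F^-_n$ has index $2$ in $\mathrm{Stab}_{\Gamma}(M^-_n)$, hence is normal. Therefore $\mathrm{Stab}_{\Gamma}(M^-_n)=F^-_n\rtimes\langle[g^-_n]\rangle$.
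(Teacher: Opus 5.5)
Your proof is correct and follows essentially the same route as the paper. You identify $\mathrm{Stab}_{\Gamma}(P^-_0)=\mathrm{PGL}_2(k[x^{-1}])$, conjugate by $[t^-_n]$ to get (\ref{E:Stab4}), intersect entrywise to get (\ref{E:Stab5}), and use that an element fixing an interior point of an edge either fixes or swaps its endpoints, with $[g^-_n]$ realizing the swap. You also correctly supply details the paper leaves implicit, namely the normalization $\lambda = u x^{j}$ when computing $\mathrm{Stab}_{\Gamma}(P^-_0)$ and the determinant argument forcing the scalar relating two representatives to lie in $k^{\times}$.
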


\vskip3mm

\noindent {\it Proof of Proposition \ref{P:Stab-FD}.} Our goal is to identify the stabilizer $\Sigma(\mathcal{P}) = \mathrm{Stab}_{\Gamma}(\mathcal{P})$ of an arbitrary point $\mathcal{P} = (P , P^-) \in \Phi$. Clearly,
$$
\Sigma(\mathcal{P}) = \mathrm{Stab}_{\Gamma}(P) \bigcap \mathrm{Stab}_{\Gamma}(P^-),
$$
so we can use Lemmas \ref{L:Stab1} and \ref{L:Stab2}. We will consider separately the cases where $P = P_0$, $P_1$, $P \in \mathcal{E} \setminus \{P_0, P_1, M\}$, and $P = M$.

\vskip2mm

\noindent \underline{\it Case 1: $P = P_0$.} First, let $P^- = P^-_n$. Comparing (\ref{E:Stab1}) and (\ref{E:Stab4}), we see that for $n > 0$, we have
$$
\Sigma(\mathcal{P}) = \left\{ [A] \ \vert \ A = \left( \begin{array}{cc} a & b \\ 0 & d \end{array}  \right) \in \mathrm{GL}_2(k[x]) \ \ \text{with} \ \ a,d \in k^{\times} \ \ \text{and} \ \ \deg b \leq n \right\},
$$
for $n = 0$, we have $\Sigma(\mathcal{P}) = \mathrm{PGL}_2(k)$, and for $n < 0$, we have
$$
\Sigma(\mathcal{P}) = \left\{ [A] \ \vert \ A = \left( \begin{array}{cc} a & 0 \\ c & d \end{array} \right) \in \mathrm{GL}_2(k[x]) \ \ \text{with} \ \ a,d \in k^{\times}  \ \ \text{and} \ \ \deg c \leq |n|  \right\},
$$
proving the proposition in this case.

\vskip.5mm

$\bullet$ Let $P^- \in \mathcal{E}^-_n \setminus \{ P^-_n, P^-_{n+1}, M^-_n \}$. Using (\ref{E:Stab1}) and (\ref{E:Stab5}), we see that for $n \geq 0$, we have
$$
\Sigma(\mathcal{P}) = \left\{ [A] \ \vert \ A = \left( \begin{array}{cc} a & b \\ 0 & d \end{array}  \right) \in \mathrm{GL}_2(k[x]) \ \ \text{with} \ \ a,d \in k^{\times} \ \ \text{and} \ \ \deg b \leq n \right\},
$$
(same as in the previous case),
and for $n < 0$, we have
$$
\Sigma(\mathcal{P}) = \left\{ [A] \ \vert \ A = \left( \begin{array}{cc} a & 0 \\ c & d \end{array}  \right) \in \mathrm{GL}_2(k[x]) \ \ \text{with} \ \ a,d \in k^{\times} \ \ \text{and}\ \ \deg c \leq |n| - 1  \right\}.
$$

\vskip.5mm

$\bullet$ $P^- = M^-_n$. By considering the determinant, we see that
$$
\mathrm{GL}_2(k[x]) \bigcap \left( F^-_n \rtimes \langle [g^-_n] \rangle  \right) = \mathrm{GL}_2(k[x]) \bigcap F^-_n,
$$
and hence the stabilizers in this case are exactly the same as in the previous case. This completes the proof of the proposition in Case 1.

\vskip2mm

\noindent \underline{\it Case 2: $P = P_1$.} Again, we start with $P^-= P^-_n$. Comparing (\ref{E:Stab2}) and (\ref{E:Stab4}), we see that for $n \geq 0$, we have
$$
\Sigma(\mathcal{P}) = \left\{ [A] \ \vert \ A = \left( \begin{array}{cc} a & b \\  0 & d \end{array} \right) \ \ \text{with} \ \ a,d \in k^{\times} \ \ \text{and} \ \ b = x^{-1}f(x) \ \ \text{where} \ \ \deg f \leq n+1    \right\},
$$
for $n = -1$, we have
$$
\Sigma(\mathcal{P}) = \left\{ [A] \ \vert \ A = \left( \begin{array}{cc} a & b \\ c & d \end{array}  \right) \ \ \text{with} \ \ a,d \in k, \ b = \beta x^{-1}, \ c = \gamma x \ \ \text{for some} \ \ \beta , \gamma \in k, \ \ \det A \in k^{\times}   \right\} =
$$
$$
[t] \cdot \mathrm{PGL}_2(k) \cdot [t]^{-1} \ \ \text{where} \ \ t = \mathrm{diag}(1 , x),
$$
and for $n < -1$, we have
$$
\Sigma(\mathcal{P}) = \left\{ [A] \ \vert \ A = \left( \begin{array}{cc} a & 0 \\ c & d \end{array}  \right) \ \ \text{with} \ \ a , d \in k^{\times} \ \ \text{and} \ \ c = xf(x) \ \ \text{where} \ \ \deg f \leq |n| - 1  \right\}.
$$

\vskip.5mm

$\bullet$ $P^- \in \mathcal{E}^-_n \setminus \{ P^-_n, P^-_{n+1}, M^-_n\}$. Using (\ref{E:Stab2}) and (\ref{E:Stab5}) we see that for $n \geq -1$, we have
$$
\Sigma(\mathcal{P}) = \left\{ [A] \ \vert \ A = \left( \begin{array}{cc} a & b \\ 0 & d \end{array} \right) \ \ \text{with} \ \ a , d \in k^{\times}, \ b = x^{-1} f(x) \ \ \text{where} \ \ \deg f \leq n+1    \right\},
$$
and for $n < -1$, we have
$$
\Sigma(\mathcal{P}) = \left\{ [A] \ \vert \ A = \left( \begin{array}{cc} a & 0 \\ c & d \end{array} \right) \ \ \text{with} \ \ a , d \in k^{\times},  \ c = x f(x) \ \ \text{where} \ \
\deg f \leq |n| - 2 \right\}.
$$

\vskip.5mm

$\bullet$ $P^- = M^-_n$. Again, by considering the determinant, we conclude that
$$
\mathrm{Stab}_{\Gamma}(P_1) \bigcap (F^-_n \rtimes \langle [g^-_n] \rangle) = \mathrm{Stab}_{\Gamma}(P_1) \bigcap F^-_n ,
$$
so the stabilizers are exactly the same as in the previous case. This concludes the consideration of Case 2.

\vskip2mm

\noindent \underline{\it Case 3: $P \in \mathcal{E} \setminus \{ P_0, P_1, M \}$.} In this case, $\mathrm{Stab}_{\Gamma}(P) = \mathrm{Stab}_{\Gamma}(P_0) \cap \mathrm{Stab}_{\Gamma}(P_1) = F$, so the following results are obtained by takings the intersections of the corresponding results in Cases 1 and 2.

\vskip.5mm

$\bullet$ $P^- = P^-_n$. Then for $n \geq 0$, we have
$$
\Sigma(\mathcal{P}) = \left\{ [A] \ \vert \ A = \left( \begin{array}{cc} a & b \\ 0 & d \end{array}   \right) \ \ \text{with} \ \ a,d \in k^{\times} \ \ \text{and} \ \ b \in k[x], \ \text{with} \ \deg b \leq n    \right\},
$$
and for $n < 0$, we have
$$
\Sigma(\mathcal{P}) = \left\{ [A] \ \vert A= \left( \begin{array}{cc} a & 0 \\ c & d   \end{array} \right)  \ \ \text{with} \ \ a , d \in k^{\times}, \ c = xf(x) \ \ \text{where} \ \ f(x) \in k[x], \ \deg f \leq |n| - 1  \right\}.
$$

\vskip.5mm

$\bullet$ $P^- \in \mathcal{E}^-_n \setminus \{ P^-_n, P^-_{n+1}, M^-_n\}$. Then for $n \geq 0$, we have
$$
\Sigma(\mathcal{P}) = \left\{ [A] \ \vert \ A = \left( \begin{array}{cc} a & b \\ 0 & d \end{array} \right)  \ \ \text{with} \ \ a , d \in k^{\times} \ \ \text{and} \ \ b \in k[x], \ \deg b \leq n \right\},
$$
for $n = -1$, we have
$$
\Sigma(\mathcal{P}) = \left\{ [A] \ \vert \ A = \left( \begin{array}{cc} a & 0 \\ 0 & d  \end{array} \right) \ \ \text{with} \ \ a , d \in k^{\times}   \right\},
$$
and for $n < -1$,
$$
\Sigma(\mathcal{P}) = \left\{ [A] \ \vert \ A = \left( \begin{array}{cc} a  & 0 \\ c & d  \end{array}  \right)  \ \ \text{with} \ \ a , d \in k^{\times} \ \ \text{and} \ \ c = xf(x) \ \ \text{where} \ \ \deg f \leq | n | - 2  \right\}.
$$
Finally if $P^- = M^-_n$, the stabilizers are the same as in the previous case.

\vskip2mm

\noindent \underline{\it Case 4: $P = M$.} If $P^- \in \mathcal{E}^-_n \setminus \{ M^-_n \}$, then by considering the determinant, we see that
$$
\mathrm{Stab}_{\Gamma}(P) \bigcap \mathrm{Stab}_{\Gamma}(P^-) = (F \rtimes \langle [g] \rangle) \bigcap \mathrm{Stab}_{\Gamma}(P^-) = F \bigcap \mathrm{Stab}_{\Gamma}(P^-),
$$
so the stabilizers are the same as in Case 3.

Now suppose that $P^- = M^-_n$. Then
$$
\mathrm{Stab}_{\Gamma}(P) \bigcap \mathrm{Stab}_{\Gamma}(P^-) = \bigl(F \rtimes \langle [g]  \rangle \bigr) \bigcap \bigl( F^-_n \rtimes \langle [ g^-_n] \rangle \bigr) = \bigl( F \bigcap F^-_n  \bigr) \bigcup \bigl(F g \bigcap F^-_n g^-_n \bigr)
$$
by determinant considerations. Let us show that
\begin{equation}\label{E:Inters001}
   F g \bigcap F^-_n g^-_n = \varnothing \ \ \text{for} \ \ n \neq -1.
\end{equation}
Matrices in $F$ and $F^-_n$ have determinants in $k^{\times}$, and the only scalar multiple (up to a factor in $k^{\times}$) of $g (g^-_n)^{-1} = \mathrm{diag}(1 , x^{2n+2})$ having determinant in $k^{\times}$ is
$$
h = \mathrm{diag}(x^{-(n+1)} , x^{n+1}).
$$
Thus, if the intersection in (\ref{E:Inters001}) is nonempty, then there exists $s \in F$ such that $r := sh \in F^-_n$. Writing $s = \left( \begin{array}{cc}  a & b \\ c & d \end{array} \right) \in \mathrm{GL}_2(k[x])$ with $c \equiv 0(\mathrm{mod}\: xk[x])$, and
$$
r = \left( \begin{array}{cc} u & v \\ w & z \end{array}   \right) \ \ \text{with} \ \ u,z \in k[x^{-1}], \ v \in x^n k[x^{-1}], \ \ \text{and} \ \ w \in x^{-(n+1)} k[x^{-1}],
$$
we obtain
$$
u = a x^{-(n+1)}, \ v = b x^{n+1}, \ w = c x^{-(n+1)}, \  \ \text{and} \ \ z = d x^{n+1}.
$$
If $n > -1$, we obtain
$$
z \in xk[x] \cap k[x^{-1}] = \{ 0 \}  \ \ \text{and} \ \ v \in x^{n+1} k[x] \cap x^n k[x^{-1}] = \{ 0 \},
$$
which is impossible. Similarly, if $n < -1$, then
$$
u \in xk[x] \cap k[x^{-1}] = \{ 0 \} \ \ \text{and} \ \ w \in x^{-n} k[x] \cap x^{-(n+1)} k[x^{-1}] = \{ 0 \},
$$
which is also impossible. This proves (\ref{E:Inters001}).

So, if $n \neq -1$, we have $\Sigma(\mathcal{P}) = F \cap F^-_n$, which was already listed in Case 3. It remains to consider the case $n = -1$. Then $g = g^-_n$
$$
\Sigma(\mathcal{P}) = \bigl( F \cap F^-_n \bigr) \rtimes \langle [g] \rangle = \left\{ [A] \ \vert \ A = \left( \begin{array}{cc} a & 0 \\ 0 & d \end{array}  \right) \ \ \text{with} \ \ a , d \in k^{\times}   \right\} \rtimes \langle [g] \rangle.
$$
This completes the proof of Proposition \ref{P:Stab-FD}. \hfill $\Box$

\section{Prof of Theorem \ref{T:Main-quadratic}}\label{S:Proof1.2}

The goal of this section is to prove Theorem \ref{T:Main-quadratic}. Thus, we will show that for $G = \mathrm{PGL}_2$ and a quadratic extension $\ell/k$, the natural map
$$
\nu_{\ell} \colon H^1\bigl(\ell/k , G(\ell[x , x^{-1}])\bigr) \longrightarrow H^1\bigl( \ell((x))/k((x)) , G(\ell((x))) \bigr)
$$
is a bijection. The proof is based on an analysis of the action of the group $\Gamma_{\ell} = G(\ell[x , x^{-1}])$ extended by the Galois group $\mathrm{Gal}(\ell/k)$ on the product of the Bruhat-Tits trees associated with the fields $\ell((x))$ and $\ell((x^{-1}))$. Our argument relies heavily on the information developed in \S \ref{S:BT} concerning the fundamental domain and stabilizers for the action of $\Gamma_{\ell}$  on this space. As we already mentioned, actions of reductive algebraic groups over local fields on the associated buildings were used by Bruhat-Tits \cite{BT87} to compute Galois cohomology in the local situation. While we are going to use a largely similar approach in the global case, the spaces on which the relevant groups act in this situation are more general than buildings: in the present paper, we will use products of (two) trees. So, to prepare ourselves for the proof of Theorem \ref{T:Main-quadratic}, we will now record the needed
formalism in the general context of CAT(0) spaces. While
Lemma \ref{L:cohom} below is certainly well-known
to experts, to the best of our knowledge, it has not been previously
mentioned in the literature explicitly.

\subsection{Cohomology computations via fixed points} We refer the reader to \cite[\S11.1]{AbBr} for an overview of the theory of CAT(0) spaces (and to \cite[Part II]{BH} for a detailed discussion). Specifically, we will need the following two facts. \vskip1mm
\begin{itemize}

\item (The Bruhat-Tits Fixed Point Theorem, see \cite[Theorem 11.23]{AbBr} or \cite[\S3.2]{BT72}) {\it If $\mathcal{G}$ is a finite group acting by isometries on a CAT(0) space $X$, then the set of fixed points $X^{\mathcal{G}}$ is nonempty.}

\vskip1mm

\item (\cite[Theorem 11.16]{AbBr} and \cite[Part II, Ch. 1, Example 1.15]{BH}) {\it Finite products of CAT(0) spaces are again CAT(0) spaces. Consequently, finite products of Bruhat-Tits buildings (in particular, of Bruhat-Tits trees) are CAT(0) spaces.}

\end{itemize}

\vskip1mm

Now suppose that $\mathcal{G}$ is a group that acts on another group $A$ via
$$
\mathcal{G} \times A \longrightarrow A, \ \ (g , a) \mapsto g(a).
$$
In order to obtain information about the cohomology set $H^1(\mathcal{G} , A)$, let us assume that both $\mathcal{G}$ and $A$ admit \underline{isometric} actions on a CAT(0) space $X$:
$$
\mathcal{G} \times X \longrightarrow X, \ \ (g , x) \mapsto g(x) \ \ \ \text{and} \ \ \ A \times X \longrightarrow X, \ \ (a , x) \mapsto a(x),
$$
which are compatible in the sense that we have
\begin{equation}\label{E:compat}
g(a(x)) = (g(a))(g(x)) \ \ \text{for all} \ \ g \in \mathcal{G}, \ a \in A, \ \text{and} \ x \in X.
\end{equation}
We note that in this case, there is an isometric action of the semi-direct product $A \rtimes \mathcal{G}$ on $X$ given by $(a , g)(x) = a(g(x))$. With this set-up, we have the following.
\begin{lemma}\label{L:cohom}
Assume that $\mathcal{G}$ is finite and there exists a subset $F \subset X^{\mathcal{G}}$ such that $A \cdot F = X$ (in other words, there exists a weak fundamental domain $F$ for the action of $A$ on $X$ that is contained in the fixed point set $X^{\mathcal{G}}$).  Then every 1-cocycle $\zeta \colon \mathcal{G} \to A$ is equivalent to 1-cocycle $\zeta' \colon \mathcal{G} \to A_x$ with values in the stabilizer $A_x = \mathrm{Stab}_A(x)$ of some $x \in F$. Consequently, the map on cohomology sets
$$
\coprod_{x \in F} H^1(\mathcal{G}, A_x) \to H^1(\mathcal{G}, A),
$$
induced by the inclusions $A_x \hookrightarrow A$, is surjective.\footnotemark
\end{lemma}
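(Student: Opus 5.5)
The plan is to mimic, in this abstract setting, the argument already carried out for the local field case in \S\ref{S:BT}. Fix a $1$-cocycle $\zeta \colon \mathcal{G} \to A$. The first step is to twist the $\mathcal{G}$-action on $X$ by $\zeta$:
$$
g * x := \zeta(g)\bigl(g(x)\bigr), \qquad g \in \mathcal{G},\ x \in X.
$$
This should be a genuine action. Using the compatibility (\ref{E:compat}) and the cocycle identity $\zeta(gh) = \zeta(g)\, g(\zeta(h))$, one computes
$$
g*(h*x) = \zeta(g)\, g\bigl(\zeta(h)(h(x))\bigr) = \zeta(g)\, g(\zeta(h))\,(gh)(x) = \zeta(gh)\,(gh)(x) = (gh)*x .
$$
Equivalently, $g \mapsto (\zeta(g), g)$ is a homomorphism $\mathcal{G} \to A \rtimes \mathcal{G}$ (cf. \cite[Lemma 3.1]{ARR}), and one composes it with the isometric action of the semi-direct product. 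Since each $\zeta(g)$ and each $g$ acts by an isometry, the twisted action is isometric.

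The second step applies the Bruhat-Tits Fixed Point Theorem. Since $\mathcal{G}$ is finite and $X$ is CAT(0), the twisted action has a fixed point $y \in X$. Because $A \cdot F = X$, we can write $y = a(x)$ with $a \in A$ and $x \in F$. We then set
$$
\zeta'(g) = a^{-1}\zeta(g)\, g(a).
$$
This is a cocycle cohomologous to $\zeta$.

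The third step checks that $\zeta'$ takes values in $A_x$:
$$
\zeta'(g)(x) = a^{-1}\zeta(g)\,g(a)(x) = a^{-1}\zeta(g)\,g(a)\bigl(g(x)\bigr) = a^{-1}\zeta(g)\,g\bigl(a(x)\bigr) = a^{-1}\bigl(g*y\bigr) = a^{-1}(y) = x .
$$
Here the second equality uses $x \in F \subset X^{\mathcal{G}}$, so that $g(x) = x$, and the third uses (\ref{E:compat}).

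It remains to justify that $H^1(\mathcal{G}, A_x)$ makes sense. Since $x$ is $\mathcal{G}$-fixed, (\ref{E:compat}) gives $g(A_x) = A_{g(x)} = A_x$, so $A_x$ is a $\mathcal{G}$-stable subgroup and the inclusion induces a map on cohomology. Surjectivity of $\coprod_{x\in F} H^1(\mathcal{G},A_x) \to H^1(\mathcal{G},A)$ then follows immediately, since every class $[\zeta]$ equals $[\zeta']$, which lies in the image of $H^1(\mathcal{G},A_x)$.

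The only point needing care is the bookkeeping of the compatibility relation in the two displayed computations. No step is a serious obstacle.
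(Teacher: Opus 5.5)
Your proposal is correct and follows the paper's proof essentially step for step. You twist the $\mathcal{G}$-action by $\zeta$ via $g \mapsto (\zeta(g), g)$, take a fixed point $y$ by the Bruhat--Tits Fixed Point Theorem, write $y = a(x)$ with $x \in F$, and check that $a^{-1}\zeta(g)\, g(a)$ fixes $x$. Your explicit verification that the twisted action is an action, and that $A_x$ is $\mathcal{G}$-stable, simply fills in what the paper cites from \cite{ARR} and relegates to a footnote.
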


\footnotetext{Since $F \subset X^{\mathcal{G}}$,  it immediately follows from (\ref{E:compat}) that for $x \in F$, the stabilizer $A_x$ is $\mathcal{G}$-invariant.}

\begin{proof}
Fix a 1-cocycle $\zeta \colon \mathcal{G} \to A$. One easily checks (cf. \cite[Lemma 3.1]{ARR}) that the map
$$
f_{\zeta} \colon \mathcal{G} \to A \rtimes \mathcal{G}, \ \ g \mapsto (\zeta(g) , g)
$$
is a group homomorphism. So, the above action of $A \rtimes \mathcal{G}$ on $X$ yields a new ``twisted'' action of $\mathcal{G}$ on $X$ given by $g * x = \zeta(g)(g(x))$. Since $\mathcal{G}$ is finite, we conclude from the Bruhat-Tits Fixed Point Theorem that there exists $x_0 \in X$ fixed by this action, hence satisfying
\begin{equation}\label{E:fixed}
\zeta(g)(g(x_0)) = x_0 \ \ \text{for all} \ \ g \in \mathcal{G}.
\end{equation}
On the other hand, since $F$ is a weak fundamental domain for the action of $A$ on $X$, we can find $a \in A$ such that $x := a^{-1}(x_0)$ belongs to $F$. Substituting $x_0 = ax$ into (\ref{E:fixed}) and taking into account that $g(x) = x$ for all $g \in \mathcal{G}$, we obtain
$$
\zeta(g)(g(a)(x)) = a(x) \ \ \text{for all} \ \ g \in \mathcal{G}.
$$
So, the 1-cocycle $\zeta'$ defined by $\zeta'(g) = a^{-1} \zeta(g) g(a)$ for $g \in \mathcal{G}$, which is equivalent to $\zeta$, takes values in $A_x$, as required.
\end{proof}
We will now quickly demonstrate how the formalism described in Lemma \ref{L:cohom} enables one to recover some conclusions made in \S\ref{S:BT}, which will pave the way for proving Theorem \ref{T:Main-quadratic}.
\begin{example}\label{Ex:local}
Let us return to the notations used in \S\ref{S:BT}. In particular, $\mathcal{K}$ will denote a local field with a fixed uniformizer $\pi$, $\mathcal{L}$ will be an unramified quadratic extension of $\mathcal{K}$ with Galois group $\mathcal{G} = \mathrm{Gal}(\mathcal{L}/\mathcal{K})$ and valuation ring $\mathcal{O}_{\mathcal{L}}$, and $G = {\rm PGL}_2.$ Let $\mathcal{T}_{\mathcal{L}}$ be the Bruhat-Tits tree constructed using $\mathcal{O}_{\mathcal{L}}$-lattices in the 2-dimensional $\mathcal{L}$-vector space $\mathcal{W}_{\mathcal{L}} = \mathcal{L}^2$ with the standard basis $e_1 , e_2$ and the standard action of $\mathcal{G}$, which induces a $\mathcal{G}$-action on $\mathcal{T}_{\mathcal{L}}$. We consider the lattices $\Lambda_i = \mathcal{O}_{\mathcal{L}} e_1 +  \mathcal{O}_{\mathcal{L}} \pi^i e_2$ for $i = 0, 1$, and let $\mathcal{P}_i = [ \Lambda_i ]$ be the corresponding vertices of $\mathcal{T}_{\mathcal{L}}$. Then the edge $\mathcal{E} = \mathcal{P}_0\mathcal{P}_1$ is a weak fundamental domain for the action of $G(\mathcal{L})$ on $\mathcal{T}_{\mathcal{L}}$; we note that $\mathcal{E}$ is contained in $\mathcal{T}_{\mathcal{L}}^{\mathcal{G}}$. Thus, Lemma \ref{L:cohom} applies, and we conclude that the map
$$
\coprod_{\mathcal{P} \in \mathcal{E}} H^1(\mathcal{G}, \Sigma_{\mathcal{L}}(\mathcal{P})) \longrightarrow H^1(\mathcal{G} , G(\mathcal{L})),
$$
where $\Sigma_{\mathcal{L}}(\mathcal{P}) = \mathrm{Stab}_{G(\mathcal{L})}(\mathcal{P})$, is surjective. We have seen (cf. Proposition \ref{P:Stab1}) that the stabilizer $\Sigma_{\mathcal{L}}(\mathcal{P})$ for $\mathcal{P} \in \mathcal{E}$ is one of the following
\vskip1mm

\begin{itemize}

\item $G(\mathcal{O}_{\mathcal{L}})$ if $\mathcal{P} = \mathcal{P}_0$;

\vskip1mm

\item $g G(\mathcal{O})g^{-1}$, with $g = \mathrm{diag}(1 , \pi)$, if $\mathcal{P} = \mathcal{P}_1$;

\vskip1mm

\item $\mathcal{F} := G(\mathcal{O}_{\mathcal{L}}) \cap \bigl( g G(\mathcal{O}_{\mathcal{L}}) g^{-1}  \bigr)$ for any $\mathcal{P} \in \mathcal{E} \setminus \{ \mathcal{P}_0, \mathcal{P}_1, \mathcal{M} \}$, where $\mathcal{M}$ is the midpoint of $\mathcal{E}$;

\vskip1mm

\item $\mathcal{F} \rtimes \langle w \rangle$, where $w = \left[ \begin{array} {cc} 0 & 1 \\ \pi & 0 \end{array}   \right]$, if $\mathcal{P} = \mathcal{M}$.

\end{itemize}

\vskip1mm

\noindent Eventually, we conclude that the natural map
$$
H^1(\mathcal{G} , G(\mathcal{O}_{\mathcal{L}})) \coprod H^1(\mathcal{G} , \mathcal{F} \rtimes \langle w \rangle) \longrightarrow H^1(\mathcal{G}, G(\mathcal{L}))
$$
is surjective. We used this fact in \S\ref{S:BT} to give a complete analysis of $H^1(\mathcal{G} , G(\mathcal{L}))$.

\end{example}

\subsection{Set-up for the proof of Theorem \ref{T:Main-quadratic}} We will prove Theorem \ref{T:Main-quadratic} by applying the approach described in Example \ref{Ex:local} in the following set-up.
Let $\ell/k$ be a (separable) quadratic extension with Galois group $\mathcal{G} = \mathrm{Gal}(\ell/k)$. We consider a 2-dimensional $k$-vector space $W_0$ with a fixed basis $e_1 , e_2$, and introduce the $k((x))$- and $k((x^{-1}))$-vector spaces
$$
W = W_0 \otimes_k k((x)) \ \ \text{and} \ \ W^- = W_0 \otimes_k k((x^{-1})),
$$
as well as the $\ell((x))$- and $\ell((x^{-1}))$-vector spaces
$$
W_{\ell} = W_0 \otimes_k \ell((x)) \ \ \text{and} \ \ W_{\ell}^- = W_0 \otimes_k \ell((x^{-1})),
$$
which we equip with the natural $\mathcal{G}$-action. We set
$$
O = k[[x]], \ \ O^- = k[[x^{-1}]], \ \ O_{\ell} = \ell[[x]], \ \ \text{and} \ \ O_{\ell}^- = \ell[[x^{-1}]].
$$
Let $\mathcal{T}_{\ell}$ (resp., $\mathcal{T}_{\ell}^-$) be the Bruhat-Tits trees constructed using $O_{\ell}$-lattices (resp., $O_{\ell}^-$-lattices) in $W_{\ell}$ (resp., $W_{\ell}^-$), and equip these with the natural $\mathcal{G}$-action. For $n \in \mathbb{Z}$, define the following lattices
$$
\Lambda_{\ell , n} = O_{\ell} e_1 + O_{\ell} x^n e_2 \ \ \text{and} \ \ \Lambda_{\ell , n}^- = O_{\ell}^- e_1 + O_{\ell}^- (x^{-1})^n e_2,
$$
and let $P_{\ell , n} = [ \Lambda_{\ell , n}]$ and $P_{\ell , n}^- = [\Lambda_{\ell , n}^-]$ denote the corresponding vertices in $\mathcal{T}_{\ell}$ and $\mathcal{T}_{\ell}^-$, respectively. Let $\mathcal{E}_{\ell , n} = P_{\ell , n} P_{\ell , n+1}$ and $\mathcal{E}_{\ell , n}^- = P_{\ell , n}^- P_{\ell , n+1}^-$ be the corresponding edges. Finally, let $\mathcal{A}_{\ell}^-$ be the apartment in $\mathcal{T}_{\ell}^-$ spanned by the vertices $\ldots , P_{\ell , -1}^-, P_{\ell , 0}^-, P_{\ell , 1}^-, \ldots $. We showed in Proposition \ref{P:FD} that $\Phi_{\ell} = \mathcal{E}_{0 , \ell} \times \mathcal{A}_{\ell}^-$ is a weak fundamental domain for the diagonal action of $\Gamma_{\ell} = {\rm PGL}_2(\ell[x , x^{-1}])$ on $X_{\ell} = \mathcal{T}_{\ell} \times \mathcal{T}_{\ell}^-$. Besides, we obviously have $\Phi_{\ell} \subset X_{\ell}^{\mathcal{G}}$. Consequently,
Lemma \ref{L:cohom} implies
that the map
$$
\coprod_{\mathcal{P} \in \Phi_{\ell}} H^1(\mathcal{G} , \Sigma_{\ell}(\mathcal{P})) \longrightarrow H^1(\mathcal{G} , \Gamma_{\ell}),
$$
where $\Sigma_{\ell}(\mathcal{P}) = \mathrm{Stab}_{\Gamma_{\ell}}(\mathcal{P})$, is surjective. We now turn to the description of the stabilizers of points $\mathcal{P} \in \Phi_{\ell}$ given in Proposition \ref{P:Stab-FD}. To deal with the possibility described in part (i) of the proposition,
we note that
any subgroup $B \subset \Gamma_{\ell}$ of the form
$$
B = \left\{ [A] \ \vert A = \left( \begin{array}{cc} a & b \\ 0 & d \end{array} \right) \ \ \text{with} \ \ a , d \in \ell^{\times}, \ b \in \mathcal{V}     \right\}
$$
or
$$
B = \left\{ [A] \ \vert A = \left( \begin{array}{cc} a & 0 \\ c & d \end{array} \right) \ \ \text{with} \ \ a , d \in \ell^{\times}, \ c \in \mathcal{V}     \right\},
$$
where $\mathcal{V} \subset \ell[x , x^{-1}]$ is a finite-dimensional $\ell$-vector space having a basis consisting of powers of $x$, is $\mathcal{G}$-invariant, and it follows from Hilbert's Theorem 90 that
$H^1(\mathcal{G} , B) = 1$. To handle the possibilities (ii) and (iii) in Proposition \ref{P:Stab-FD}, we observe that since $t = \mathrm{diag}(1 , x)$ lies in $\mathrm{GL}_2(k[x , x^{-1}])$, hence is $\mathcal{G}$-fixed, the sets
$H^1(\mathcal{G} , G(\ell))$ and $H^1(\mathcal{G} , [t] \cdot G(\ell) \cdot [t]^{-1})$ have the same image in $H^1(\mathcal{G} , \Gamma_{\ell})$. Now let $T$ be the maximal torus in $G$, which is the image of the maximal diagonal torus in $\mathrm{GL}_2$, and set $g = \left[ \begin{array}{cc} 0 & 1 \\ x & 0 \end{array} \right]$. Then, taking into account part (iv) of the proposition, we deduce that the map
\begin{equation}\label{E:Surj001}
H^1(\mathcal{G} , G(\ell)) \coprod H^1(\mathcal{G} , T(\ell) \rtimes \langle g \rangle) \longrightarrow H^1(\mathcal{G} , \Gamma_{\ell})
\end{equation}
is surjective.

\subsection{Conclusion of the proof of Theorem \ref{T:Main-quadratic}} We let $\mathcal{H}_0^*$ (resp., $\mathcal{H}_1^*$) denote the image of the map $H^1(\mathcal{G} , G(\ell)) \to H^1(\mathcal{G}, \Gamma_{\ell})$ (resp., $H^1(\mathcal{G}, T(\ell) \rtimes \langle g \rangle) \to H^1(\mathcal{G} , \Gamma_{\ell})$. Since the subsets $\mathcal{H}_0$ and $\mathcal{H}_1 \setminus \{ 1 \}$ of $H^1(\mathcal{G} , G(\ell((x))))$, introduced in subsection 2.3, are disjoint, in view of the surjectivity of  (\ref{E:Surj001}), it is enough to show that $\nu_{\ell}$ induces the bijections $\mathcal{H}_0^* \to \mathcal{H}_0$ and $\mathcal{H}_1^* \to \mathcal{H}_1$.

We have the following commutative diagram
\begin{equation}\label{E:CD001}
\begin{CD}
H^1(\mathcal{G} , G(\ell)) @ > \beta >> H^1(\mathcal{G} , G(\ell[[x]]))  \\
@ V \alpha VV @ VV \gamma V \\
H^1(\mathcal{G} , \Gamma_{\ell}) @ > \nu_{\ell} >> H^1(\mathcal{G}, G(\ell((x)))).
\end{CD}
\end{equation}
It immediately follows from Hensel's Lemma (cf. \cite[\S 3.4, Lemme 2(ii)]{BT87}) that $\beta$ is a bijection. (We note that in the case at hand, one can also prove this by
a direct argument: namely, one considers
the homomorphism $\rho \colon G(\ell[[x]]) \to G(\ell)$ given by the specialization $x \to 0$ and shows that
for the congruence subgroup $\mathcal{C} = \ker \rho$ modulo $x$, the cohomology set $H^1(\mathcal{G} , {}_{\zeta}\mathcal{C})$ is trivial for the twist of $\mathcal{C}$ by any cocycle $\zeta \in Z^1(\mathcal{G}, G(\ell[[x]]))$, which is accomplished by looking at
the filtration of $\mathcal{C}$ by the congruence subgroups modulo powers of $x$ and observing that all consecutive quotients in this filtration are
3-dimensional vector spaces, hence have trivial cohomology by the additive form of Hilbert's Theorem 90.) Furthermore, by a result of Nisnevich \cite{Nisn} (see also \cite{Guo}), the map $\gamma$ is injective. (Again, in our situation, this follows from the following well-known statement (cf. \cite[Theorem 9.6]{Saltman}): if $\mathcal{D}_1$ and $\mathcal{D}_2$ are quaternion algebras over $\mathcal{K} = k((x))$, and $\mathcal{A}_1$ and $\mathcal{A}_2$ are Azumaya algebras over $\mathcal{O} = k[[x]]$ such that $\mathcal{D}_i = \mathcal{A}_i \otimes_{\mathcal{O}} \mathcal{K}$ for $i = 1, 2$, then $\mathcal{D}_1 \simeq \mathcal{D}_2$ if and only if $\mathcal{A}_1 \simeq \mathcal{A}_2$.) These two facts immediately imply that $\nu_{\ell}$ gives a bijection between $\mathcal{H}_0^* = \im \alpha$ and $\mathcal{H}_0 = \im \gamma$, as required.

In dealing with $\mathcal{H}_1^*$ and $\mathcal{H}_1$, we will use the following subgroup introduced in subsection 2.3:
$$
\Sigma = \left\{ [A] : A = \left( \begin{array}{cc} a & b \\ c & d \end{array} \right) \in \mathrm{GL}_2(\ell[[x]]) \ \ \text{with} \ \ c \equiv 0(\mathrm{mod}\: x\ell[[x]]) \right\},
$$
which is obviously normalized by $g$. We have the following commutative diagram similar to (\ref{E:CD001}):
\begin{equation}\label{E:CD002}
\begin{CD}
H^1(\mathcal{G} , T(\ell) \rtimes \langle g \rangle) @ > \beta >> H^1(\mathcal{G} , \Sigma \rtimes \langle g \rangle)  \\
@ V \alpha VV @ VV \gamma V \\
H^1(\mathcal{G} , \Gamma_{\ell}) @ > \nu_{\ell} >> H^1(\mathcal{G}, G(\ell((x)))).
\end{CD}
\end{equation}
As above, to prove that $\nu_{\ell}$ induces a bijection between $\mathcal{H}_1^* = \im \alpha$ and $\mathcal{H}_1 = \im \gamma$, it is enough to show that in (\ref{E:CD002}), the map $\beta$ is a bijection and the map $\gamma$ is an injection.

Both facts easily follow from our considerations in subsection 2.3. Indeed, let $\ell[[x]]^{\times} \to \ell^{\times}$, $a \mapsto \bar{a}$, be the specialization $x \to 0$. Then the map $\rho \colon \Sigma \to T(\ell)$, $\left[ \begin{array}{cc} a & b \\ c & d \end{array} \right] \to \left[ \begin{array}{cc} \bar{a} & 0 \\ 0 & \bar{d} \end{array} \right]$ is the left inverse for the natural embedding $T(\ell) \to \Sigma$, with kernel
$$
\Sigma_0 =  \left\{ [A] : A = \left( \begin{array}{cc} a & b \\ c & d \end{array} \right) \in \mathrm{GL}_2(\ell[[x]]) \ \ \text{with} \ \ a,d \equiv 1(\mathrm{mod} x\ell[[x]]), \ c \equiv 0(\mathrm{mod}\: x\ell[[x]]) \right\}.
$$
We have seen in subsection 2.3 that $H^1(\mathcal{G} , {}_{\zeta}\Sigma_0)= 1$ for any cocycle $\zeta \in Z^1(\mathcal{G} , \Sigma \rtimes \langle g \rangle)$, and the bijectivity of $\beta$ follows. Furthermore, we have seen that there is a natural bijection $\omega \colon k/N_{\ell/k}(\ell^{\times}) \to H^1(\mathcal{G} , \Sigma \rtimes \langle g \rangle) \setminus \{ 1 \}$ such
that
$$
\gamma(\omega(aN_{\ell}(\ell^{\times}))) = [(\ell((x))/k((x)) , ax)],
$$
the isomorphism class of the corresponding quaternion algebra. Since the embedding $k \hookrightarrow k((x))$ yields an isomorphism
$$
k^{\times} / N_{\ell/k}(\ell^{\times}) \to k((x))^{\times}/N_{\ell((x))/k((x))}(\ell((x))^{\times}),
$$
we conclude that the element corresponding to each coset $aN_{\ell/k}(\ell^{\times}) \in k^{\times}/N_{\ell/k}(\ell^{\times})$ is nontrivial, and different cosets are mapped to different elements. This yields the injectivity of $\gamma$ and completes the proof of Theorem \ref{T:Main-quadratic}. \hfill $\Box$

\section{Proof of Theorem \ref{T:Main-sep} }\label{S:proof1.1}

In this section, we will use
the bijectivity of the map $\nu_{\ell}$ for {\it every quadratic extension} $\ell/k$, provided by Theorem \ref{T:Main-quadratic},
to establish the bijectivity of
$$
\nu_{k^{\rm sep}} \colon H^1(k^{\rm sep}/k, G(k^{\rm sep}[x, x^{-1}])) \to H^1(k((x))^{\mathrm{ur}}/ k((x)), G(k((x))^{\mathrm{ur}})).
$$
For proving both
the surjectivity and the injectivity of $\nu_{k^{\rm sep}}$, we will make use of the following fact:
\vskip2mm
\noindent $\bullet$  \parbox[t]{16cm}{\it for every cohomology class $\zeta \in H^1(k((x))^{\mathrm{ur}}/k((x)), G(k((x))^{\mathrm{ur}}))$, there exists a quadratic extension
$\ell/k$ such that the field extension $\ell((x))/k((x))$ kills $\zeta$, and then $\zeta$  lies in the image of the inflation map
$$
\iota_{\ell/k} \colon H^1(\ell((x))/k((x)), G(\ell((x)))) \longrightarrow H^1(k((x))^{\mathrm{ur}}/k((x)), G(k((x))^{\mathrm{ur}})).
$$}

\vskip2mm

\noindent Indeed,  the cohomology class $\zeta$ naturally corresponds to (the isomorphism class of) a central quaternion $k((x))$-algebra $D$. It is well-known that $D$ contains a maximal unramified subfield (cf. \cite[Theorem 10.1]{Saltman}) , which in our case is necessarily of the form $\ell((x))$ for some quadratic extension $\ell/k$. Then the base change $\ell((x))/k((x))$ kills the Brauer class of $D$, and hence the cohomology class $\zeta$ itself. Then the fact that $\zeta$ lies in $\im \iota_{\ell/k}$ follows from the (noncommutative) Inflation-Restriction Sequence (see \cite[Ch. 1, \S1.3.2]{PlRR}).

\vskip2mm

\noindent $\bullet$ \underline{$\nu_{k^{\rm sep}}$ {\it is surjective}.} For a given $\zeta \in H^1(k^{\rm sep}((x))/k((x)), G(k^{\rm sep}((x))))$, pick a quadratic extension $\ell/k$ so that $\zeta \in \im \iota_{\ell/k}$. Since $\nu_{\ell}$ is surjective, it follows
that $\zeta$ lies in the image of the composite map
$$
H^1(\ell/k, G(\ell[x, x^{-1}])) \stackrel{\nu_{\ell}}{\longrightarrow} H^1(\ell((x))/k((x)), G(\ell((x)))) \stackrel{\iota_{\ell/k}}{\longrightarrow} H^1(k((x))^{\mathrm{ur}}/k((x)), G(k^((x))^{\mathrm{ur}})),
$$
hence in the image of $\nu_{k^{\rm sep}}$.

\vskip2mm

\noindent $\bullet$ \underline{$\nu_{k^{\rm sep}}$ {\it is injective}.} We begin with the following.
\begin{lemma}\label{L:TriK}
For any finite Galois extension $k'/k$, the natural map
$$
\nu_{k'} \colon H^1(k'/k , G(k'[x , x^{-1}])) \longrightarrow H^1(k'((x))/k((x)) , G(k'((x)))
$$
has trivial kernel.
\end{lemma}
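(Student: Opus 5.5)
Let $\mathcal{G}' = \mathrm{Gal}(k'/k)$ and $\Gamma' = G(k'[x,x^{-1}])$. Take a cocycle $\zeta \in Z^1(\mathcal{G}', \Gamma')$ whose image in $H^1(k'((x))/k((x)), G(k'((x))))$ is trivial. We must show that $\zeta$ is already trivial in $H^1(\mathcal{G}', \Gamma')$.

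The plan is to run the argument of \S\ref{S:Proof1.2} with the finite group $\mathcal{G}'$ in place of the quadratic Galois group. Proposition \ref{P:FD} applied over $k'$ gives the weak fundamental domain $\Phi_{k'} = \mathcal{E}_{k',0}\times\mathcal{A}^-_{k'}$ for $\Gamma'$ acting on $\mathcal{T}_{k'}\times\mathcal{T}^-_{k'}$. This domain lies in the $\mathcal{G}'$-fixed set, because all the lattices involved are defined over $k$. Lemma \ref{L:cohom} then lets us replace $\zeta$ by an equivalent cocycle with values in the stabilizer of some point of $\Phi_{k'}$.

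Proposition \ref{P:Stab-FD} describes these stabilizers, and we treat each type in turn.
\begin{itemize}
\item \emph{Type (i).} The stabilizer is a split extension $U\rtimes T(k')$ with $U\cong\mathcal{V}$ a $k'$-vector space defined over $k$. Both $H^1(\mathcal{G}',U)$ and $H^1(\mathcal{G}',T(k'))$ vanish, by additive and multiplicative Hilbert 90 (for $T\simeq\mathbb{G}_m$). The twisted versions of $U$ are again semilinear $k'$-spaces, so the class is trivial, exactly as in the quadratic case.
\item \emph{Types (ii) and (iii).} Since $t$ is $\mathcal{G}'$-fixed, we may assume $\zeta$ takes values in $G(k')$. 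The class then lies in the image of $H^1(\mathcal{G}',G(k'))\to H^1(\mathcal{G}',G(k'[[x]]))\to H^1(\mathcal{G}',G(k'((x))))$. As noted in the proof of Theorem \ref{T:Main-quadratic}, the first map is bijective by Hensel's lemma (the congruence filtration argument works for any finite Galois group). The second map is injective by Nisnevich. Since the composite sends the class to the trivial class, $\zeta$ is trivial in $H^1(\mathcal{G}',G(k'))$, hence in $H^1(\mathcal{G}',\Gamma')$.
\item \emph{Type (iv).} The stabilizer is $N' = T(k')\rtimes\langle g\rangle$. We use the projection $N'\to\langle g\rangle\cong\mathbb{Z}/2$, on which $\mathcal{G}'$ acts trivially. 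The image of $\zeta$ is a homomorphism $\chi\colon\mathcal{G}'\to\mathbb{Z}/2$.
\end{itemize}

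Suppose first that $\chi$ is trivial. Then $\zeta$ comes from $H^1(\mathcal{G}',T(k'))=1$, so it is trivial.

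Suppose now that $\chi$ is nontrivial; this is the main case. Let $\ell\subset k'$ be the quadratic subextension cut out by $\ker\chi$. The plan is to show that $\zeta$ is inflated from a cocycle of $\mathrm{Gal}(\ell/k)$ and then invoke Theorem \ref{T:Main-quadratic}. On $H=\ker\chi=\mathrm{Gal}(k'/\ell)$, the restriction of $\zeta$ takes values in $T(k')$, and $H^1(H,T(k'))=1$. Hence after modifying $\zeta$ by an element of $T(k')\subset N'$ we may assume $\zeta|_H\equiv1$. The cocycle identity then shows that $\zeta$ is constant on cosets of $H$. Moreover, its value $\zeta(\sigma)$ on the nontrivial coset is $H$-invariant, so it lies in $N'\cap G(\ell[x,x^{-1}])$. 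Thus $\zeta$ is inflated from a cocycle $\bar\zeta\in Z^1(\ell/k,\Gamma_\ell)$.

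It remains to pass from $\zeta$ to $\bar\zeta$ and conclude.
\begin{itemize}
\item The image of $\bar\zeta$ in $H^1(\ell((x))/k((x)),G)$ inflates to the trivial class in $H^1(k'((x))/k((x)),G)$.
\item Inflation is injective on $H^1$ by the nonabelian inflation–restriction sequence. So $\nu_\ell(\bar\zeta)$ is trivial.
\item By Theorem \ref{T:Main-quadratic}, $\nu_\ell$ is injective, so $\bar\zeta$ is trivial in $H^1(\ell/k,\Gamma_\ell)$.
\item Hence its inflation $\zeta$ is trivial.
\end{itemize}

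The main obstacle is this nontrivial-$\chi$ case, specifically the bookkeeping needed to check that the modified cocycle really descends to $\mathrm{Gal}(\ell/k)$ with values in $\Gamma_\ell$ rather than in $\Gamma'$. An alternative is to avoid descent altogether and compute the twisted torus directly, as in \S2.3: the twisted torus ${}_\chi T$ is the norm-one torus for $\ell/k$ base-changed to $k'$, and the class is then determined by an element of $k^\times/N_{\ell/k}(\ell^\times)$ that is detected by the quaternion algebra $(\ell((x))/k((x)),ax)$, which is never split.
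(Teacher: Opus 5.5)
Your proof is correct, but it takes a genuinely different route from the paper's.

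\textbf{The paper's route.} The paper does not use the product of trees for this lemma. It first proves the decomposition $G(k'((x))) = \Gamma_{k'}\cdot G(k'[[x]])$, using density of $k'[x,x^{-1}]$ for the $\mathrm{SL}_2$-part and $k'((x))^{\times} = \langle x\rangle\cdot k'[[x]]^{\times}$ for the diagonal torus. With this, a cocycle in $\ker\nu_{k'}$ becomes equivalent in $\Gamma_{k'}$ to $\sigma\mapsto s\,\sigma(s)^{-1}$ with $s\in G(k'[[x]])$. Such a cocycle takes values in $\Gamma_{k'}\cap G(k'[[x]]) = G(k'[x])$. Raghunathan--Ramanathan then moves it into $G(k')$, and specialization $x\mapsto 0$ shows it is trivial.

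\textbf{Your route.} You rerun the machinery of \S\ref{S:Proof1.2} over an arbitrary finite Galois $k'/k$: Lemma \ref{L:cohom} with the $\mathcal{G}'$-fixed domain $\Phi_{k'}$, together with the stabilizer list of Proposition \ref{P:Stab-FD}. You handle the triangular and constant stabilizers by additive and multiplicative Hilbert 90, Hensel and Nisnevich. For $T(k')\rtimes\langle g\rangle$ you descend to the quadratic subextension cut out by $\chi$ and invoke injectivity of $\nu_{\ell}$ from Theorem \ref{T:Main-quadratic}. The descent bookkeeping you worried about is sound:
\begin{itemize}
\item Hilbert 90 for $k'/\ell$ kills $\zeta|_H$.
\item Normality of $H$ makes the values $H$-invariant, so they lie in $(N')^H = T(\ell)\rtimes\langle g\rangle\subset\Gamma_{\ell}$.
\item Nonabelian inflation is injective.
\end{itemize}
There is no circularity, since Theorem \ref{T:Main-quadratic} is proved in \S\ref{S:Proof1.2} without this lemma.

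\textbf{What each approach buys.} The paper's argument does not depend on the $\mathrm{PGL}_2$-specific stabilizer computations or on Theorem \ref{T:Main-quadratic}. It also needs no Nisnevich-type input, because (\ref{E:Decom1}) produces the coboundary over $k'[[x]]$ directly. As the subsequent remark notes, it extends to arbitrary $k$-split reductive groups; the price is the appeal to Raghunathan--Ramanathan. Your argument avoids Raghunathan--Ramanathan and the decomposition and stays entirely within the tree method. Along the way it shows that every class in $H^1(k'/k,\Gamma_{k'})$ comes from $G(k')$ or from $T(k')\rtimes\langle g\rangle$, but it is tied to $\mathrm{PGL}_2$.

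Your closing alternative is also valid. It shows directly that the nontrivial-$\chi$ case never lies in the kernel: $(\ell((x))/k((x)),ax)$ is a division algebra because $ax$ has odd valuation, while norms from the unramified extension have even valuation.
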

\begin{proof}
Set $\Gamma_{k'} = G(k'[x , x^{-1}])$. First, let us show that
\begin{equation}\label{E:Decom1}
G(k'((x))) = \Gamma_{k'} \cdot G(k'[[x]]).
\end{equation}
Indeed, since $k'[x , x^{-1}]$ is dense in $k'((x))$, considering elementary matrices, we find that the closure of $\mathrm{GL}_2(k'[x , x^{-1}])$ in $\mathrm{GL}_2(k'((x)))$ contains $\mathrm{SL}_2(k'((x)))$, and therefore
\begin{equation}\label{E:Decom2}
\mathrm{SL}_2(k'((x))) \subset \mathrm{GL}_2(k'[x , x^{-1}]) \cdot \mathrm{GL}_2(k[[x]]).
\end{equation}
On the other hand, let $T$ be the diagonal torus in $\mathrm{GL}_2$. Then the decomposition $k'((x))^{\times} = \langle x \rangle \cdot k'[[x]]^{\times}$
yields the decomposition $T(k'((x))) = T(k'[x , x^{-1}]) \cdot T(k'[[x]])$. Combining this with (\ref{E:Decom2}), we obtain
$$
\mathrm{GL}_2(k'((x))) = \mathrm{SL}_2(k'((x))) \cdot T(k'((x))) = \mathrm{SL}_2(k'((x))) \cdot T(k'[x , x^{-1}]) \cdot T(k'[[x]]) =
$$
$$
=T(k'[x , x^{-1}]) \cdot \mathrm{SL}_2(k'((x))) \cdot T(k'[[x]]) = \mathrm{GL}_2(k'[x , x^{-1}]) \cdot \mathrm{GL}_2(k'[[x]]),
$$
and (\ref{E:Decom1}) follows.

Now suppose $\zeta \in Z^1(k'/k , \Gamma_{k'})$ is a cocycle such that the corresponding cohomology class lies in $\ker \nu_{k'}$. This means that there exists $g \in G(k'((x)))$ such that
$$
\zeta(\sigma) = g^{-1} \sigma(g) \ \ \text{for all} \ \ \sigma \in \mathrm{Gal}(k'/k).
$$
According to (\ref{E:Decom1}), we can write $g^{-1} = h \cdot s$ with $h \in \Gamma_{k'}$ and $s \in G(k'[[x]])$. Then the equation
\begin{equation}\label{E:Cocyc1}
\zeta'(\sigma) := h^{-1} \zeta(\sigma) \sigma(h) = s \sigma(s)^{-1} \ \ \text{for} \ \ \sigma \in \mathrm{Gal}(k'/k)
\end{equation}
defines a 1-cocycle  $\zeta' \in Z^1(k'/k , \Gamma_{k'})$ which is equivalent to $\zeta$ but has values in $\Gamma_{k'} \cap G(k'[[x]]) = G(k'[x])$. Since $\zeta'$ becomes trivial over the separable extension $k'/k$,  it follows from the Raghunathan-Ramanathan theorem \cite{RR} that $\zeta'$ is equivalent in $Z^1(k'/k , G(k'[x]))$ to a cocycle $\zeta_0 \in Z^1(k'/k , G(k'))$. According to (\ref{E:Cocyc1}), the cocycle $\zeta_0$ becomes trivial in $H^1(k'/k , G(k'[[x]]))$, and using the specialization $k'[[x]] \to k'$, $x \to 0$, we conclude that $\zeta_0$ is trivial in $H^1(k'/k , G)$, completing the argument.
\end{proof}

\begin{remark}
With minor modifications, the proof of Lemma  \ref{L:TriK} extends to an arbitrary $k$-split reductive group (cf. the proof of Theorem ..).
\end{remark}

We are now in a position to finish the proof of the injectivity of $\nu_{k^{\mathrm sep}}$. Suppose we have cohomology classes
$\xi_1 , \xi_2 \in H^1(k^{\mathrm{sep}}/k , G(k^{\mathrm{sep}}[x , x^{-1}]))$
such that
\begin{equation}\label{E:Equal1}
\nu_{k^{\mathrm{sep}}}(\xi_1) = \nu_{k^{\mathrm{sep}}}(\xi_2) =: \zeta.
\end{equation}
As noted above,
we can pick a quadratic extension $\ell/k$ such that the base change $\ell((x))/k((x))$ kills $\zeta$.
We have the following commutative diagram
$$
\begin{CD}
H^1(k^{\mathrm{sep}}/k , G(k^{\mathrm{sep}}[x , x^{-1}])) @ > \nu_{k^{\mathrm{sep}}} >> H^1(k((x))^{\mathrm{ur}}/k((x)) , G(k((x))^{\mathrm{ur}})) \\
@ V \rho_1 VV   @ VV \rho_2 V \\
H^1(\ell^{\mathrm{sep}}/\ell , G(\ell^{\mathrm{sep}}[x , x^{-1}])) @ > \nu_{\ell^{\mathrm{sep}}} >> H^1(\ell((x))^{\mathrm{ur}} / \ell((x)) , G(\ell((x))^{\mathrm{ur}}))
\end{CD}
$$
where $\rho_1$ and $\rho_2$ are the corresponding restriction maps (we note that of course $\ell((x))^{\mathrm{ur}} = k((x))^{\mathrm{ur}}$). By construction, $\rho_2(\zeta)$ is trivial. On the other hand, according to
Lemma \ref{L:TriK}, applied over $\ell$, the map
$\nu_{\ell^{\mathrm{sep}}}$ has trivial kernel, implying that both $\rho_1(\xi_1)$ and $\rho_1(\xi_2)$ are trivial. Then it follows from the Inflation-Restriction Sequence that we can write $\xi_i = \varepsilon_{\ell/k}(\theta_i)$ for $i = 1, 2$, where
$$
\varepsilon_{\ell/k} \colon H^1(\ell/k , G(\ell[x , x^{-1}])) \longrightarrow H^1(k^{\mathrm{sep}}/k , G(k^{\mathrm{sep}}[x , x^{-1}]))
$$
is the inflation map, and $\theta_i \in H^1(\ell/k , G(\ell[x , x^{-1}]))$.

Finally, in the commutative diagram
$$
\begin{CD}
H^1(\ell/k , G(\ell[x , x^{-1}])) @ > \nu_{\ell} >> H^1(\ell((x))/k((x)) , G(\ell((x)))) \\
@ V \varepsilon_{\ell/k} VV  @ VV \iota_{\ell/k} V \\
H^1(k^{\mathrm{sep}}/k , G(k^{\mathrm{sep}}[x , x^{-1}])) @ > \nu_{k^{\mathrm{sep}}} >> H^1(k((x))^{\mathrm{ur}}/ k((x)) , G(k((x))^{\mathrm{ur}}))
\end{CD}
$$
the maps $\nu_{\ell}$ and $\iota_{\ell/k}$ are injective, so (\ref{E:Equal1}) implies that $\theta_1 = \theta_2$, hence $\xi_1 = \xi_2$, as required. \hfill $\Box$

\section{Finite subgroups of $\Gamma$}\label{S:FS}

Using actions on appropriate affine buildings, it was proved in \cite[Theorem 1.1]{ARR} that for an arbitrary reductive algebraic group $G$ over a field $k$ of characteristic zero, every finite subgroup of $G(k[x])$ is conjugate to a subgroup contained in $G(k)$. It is easy to see (cf. Remark \ref{R:FS1} below) that this statement is no longer valid over the ring $k[x,x^{-1}]$ of Laurent polynomials, but nevertheless it has the following analogue over this ring for the group $G = \mathrm{PGL}_2$.

Suppose $k$ is again a field of characteristic 0, and let
$G = \mathrm{PGL}_2$, $\Gamma = G(k[x , x^{-1}])$, and $\Sigma = T(k) \rtimes \langle [g] \rangle$, where $T$ is the image in $G$ of the diagonal torus in $\mathrm{GL}_2$ and $g = \left( \begin{array}{cc} 0 & 1 \\ x & 0 \end{array}  \right)$.
\begin{thm}\label{T:FS1}
 Every finite subgroup of $\Gamma$ is conjugate in $\Gamma$ to either a subgroup of $G(k)$ or a subgroup of $\Sigma$.
\end{thm}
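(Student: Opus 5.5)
Let $H \subset \Gamma$ be a finite subgroup. The plan is to use the diagonal action of $\Gamma$ on the CAT(0) space $\mathcal{X} = \mathcal{T} \times \mathcal{T}^-$, i.e.\ the special case of Lemma \ref{L:cohom} with trivial cocycle and trivial Galois action. By the Bruhat-Tits Fixed Point Theorem, $H$ fixes some point $x_0 \in \mathcal{X}$. By Proposition \ref{P:FD}, there is $\gamma \in \Gamma$ with $\gamma^{-1} x_0 \in \Phi$. Then $\gamma^{-1} H \gamma$ fixes a point $\mathcal{P} \in \Phi$, so $\gamma^{-1} H \gamma \subset \Sigma(\mathcal{P})$. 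It therefore suffices to show that every finite subgroup of each stabilizer listed in Proposition \ref{P:Stab-FD} is conjugate in $\Gamma$ into $G(k)$ or into $\Sigma$.

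Cases (ii) and (iv) are immediate, since those stabilizers are $G(k)$ and $\Sigma$. In case (iii) the stabilizer is $[t]\,G(k)\,[t]^{-1}$ with $t = \mathrm{diag}(1,x) \in \mathrm{GL}_2(k[x,x^{-1}])$. Thus $[t] \in \Gamma$, and conjugating by $[t]^{-1}$ puts the subgroup inside $G(k)$.

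The remaining case (i) is the one that needs an argument, and I expect it to be the main (though mild) obstacle. Here the stabilizer is $B = \mathcal{U} \rtimes T(k)$, with $\mathcal{U} = \{[\begin{smallmatrix}1&b\\0&1\end{smallmatrix}] : b\in\mathcal{V}\}$; the lower triangular version is handled symmetrically. The map $[A]\mapsto$ diagonal part is a homomorphism $B\to T(k)$ with kernel $\mathcal{U}\cong \mathcal{V}$. Since $\mathrm{char}\, k=0$, $\mathcal{U}$ is a $\mathbb{Q}$-vector space and is torsion-free, so a finite $H'\subset B$ meets $\mathcal{U}$ trivially and projects isomorphically onto a finite subgroup $\bar H\subset T(k)$. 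Two sections of $\mathcal{U}\to B\to T(k)$ over $\bar H$ differ by a 1-cocycle $\bar H\to\mathcal{U}$. Because $|\bar H|$ is invertible on the $\mathbb{Q}$-vector space $\mathcal{U}$, we have $H^1(\bar H,\mathcal{U})=0$, by the averaging argument. Hence $H'$ is conjugate by an element $u\in\mathcal{U}\subset\Gamma$ to the standard section, i.e.\ into $T(k)\subset G(k)$.

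Concretely, one can avoid cohomology language and define $b_0 = \frac{1}{|\bar H|}\sum_{h} (\text{appropriate translate of the } b\text{-entry})$, then conjugate by $[\begin{smallmatrix}1&b_0\\0&1\end{smallmatrix}]$ and check that $\mathcal{V}$ is stable under the scalings $b\mapsto (a/d)\,b$ coming from $T(k)$. This stability holds since $\mathcal{V}$ is a $k$-subspace. This completes the reduction in all cases, and the theorem follows.
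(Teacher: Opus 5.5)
Your proposal is correct and follows the paper's proof: a fixed point in $\mathcal{X}=\mathcal{T}\times\mathcal{T}^-$ comes from the Bruhat--Tits Fixed Point Theorem, you move it into $\Phi$ via Proposition~\ref{P:FD}, and then go through the stabilizers listed in Proposition~\ref{P:Stab-FD}. The only difference is in case (i): the paper cites \cite[Corollary 2.4]{ARR} for the fact that finite subgroups of $U(k)\rtimes T(k)$ are conjugate into $T(k)$, whereas you prove it directly, using that $\mathcal{U}\cong\mathcal{V}$ is uniquely divisible in characteristic zero, so $H^1(\bar H,\mathcal{U})=0$.
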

\begin{proof}
The argument closely follows the proof of \cite[Theorem 1.1]{ARR}. Let $\mathcal{T}$ and $\mathcal{T}^-$ be the Bruhat-Tits trees associated with $k((x))$ and $k((x^{-1}))$, and $\mathcal{X} = \mathcal{T} \times \mathcal{T}^-$ as in \S \ref{S:Product}. Then the group $\Gamma$ acts on each tree by isometries, and the diagonal action of $\Gamma$ on $\mathcal{X}$ admits a weak fundamental domain $\Phi$ described in subsection 3.1 (see Proposition \ref{P:FD}).   Let $\Delta \subset \Gamma$ be a finite subgroup.   Applying the Bruhat-Tits Fixed Point Theorem to the action of $\Delta$ on the CAT(0) space $\mathcal{X}$, we obtain that $\Delta$ fixes a point $\mathcal{P} \in \mathcal{X}$. Replacing $\Delta$ with a $\Gamma$-conjugate subgroup, we may assume that $\mathcal{P} \in \Phi$. Thus, $\Delta  \subset \mathrm{Stab}_{\Gamma}(\mathcal{P})$, and all the possibilities for the stabilizer $\mathrm{Stab}_{\Gamma}(\mathcal{P})$, with $\mathcal{P} \in \Phi$, are listed in the items (i)-(iv) of Proposition \ref{P:Stab-FD}. The stabilizer in case (i) is $G(k)$, in case (ii), it is conjugate to $G(k)$, and in case (iv), it is $\Sigma$. So, in these cases, the required assertion follows directly. It remains to consider case (i). Here we have
$$
\mathrm{Stab}_{\Gamma}(\mathcal{P}) = U(k) \rtimes T(k),
$$
where $T$ is the image in $G$ of the maximal diagonal torus in $\mathrm{GL}_2$, and $U$ is a vector $k$-group (i.e. a direct product of copies of the additive group $\mathbb{G}_a$) with a $k$-defined action of $T$.  Since $\mathrm{char}\: k = 0$, according to \cite[Corollary 2.4]{ARR}, every finite subgroup of $U(k) \rtimes T(k)$ is conjugate to a subgroup of $T(k) \subset G(k)$, completing the argument.
\end{proof}

\begin{remark}\label{R:FS1}
The order two subgroup $\langle [g] \rangle \subset \Gamma$ is {\it not} conjugate to a subgroup of $G(k)$. Indeed, we have a group homomorphism
$$
\psi \colon G(k(x)) \to k(x)^{\times} / {k(x)^{\times}}^2,  \ \ \psi([A]) = (\det A) {k(x)^{\times}}^2.
$$
Our claim follows from the observations that
$$
\psi(G(k)) = (k^{\times} {k(x)^{\times}}^2)/{k(x)^{\times}}^2 \ \ \text{and} \ \ \psi(\langle [g] \rangle) = \langle x {k(x)^{\times}}^2 \rangle.
$$
\end{remark}

As in \cite{ARR}, we say that a group possesses property (FC) if it has finitely many conjugacy classes of finite subgroups.
\begin{cor}\label{C:FC1}
 Let $G = \mathrm{PGL}_2$ over a finite extension $k$ of the $p$-adic field $\mathbb{Q}_p$. Then the group $\Gamma = G(k[x , x^{-1}])$ has property (FC).
\end{cor}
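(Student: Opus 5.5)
By Theorem \ref{T:FS1}, every finite subgroup of $\Gamma$ is conjugate in $\Gamma$ to a subgroup of $G(k)=\mathrm{PGL}_2(k)$ or of $\Sigma = T(k)\rtimes\langle [g]\rangle$. It therefore suffices to show that each of $G(k)$ and $\Sigma$ has only finitely many conjugacy classes of finite subgroups. Conjugacy inside $G(k)$ or $\Sigma$ implies conjugacy in $\Gamma$, so the number of $\Gamma$-conjugacy classes is then bounded by the sum of the two counts.

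For $G(k)$ with $k$ a finite extension of $\mathbb{Q}_p$, I would invoke the known fact (used in \cite{ARR} for the polynomial ring case) that $G(k)$ has property (FC) for any reductive $G$ over a $p$-adic field. A self-contained route goes as follows. A finite subgroup fixes a point of the Bruhat-Tits tree of $\mathrm{PGL}_2(k)$, by the Bruhat-Tits Fixed Point Theorem, so it is conjugate into one of the finitely many stabilizers of points of an edge. These are the compact open subgroups $\mathrm{PGL}_2(\mathcal{O}_k)$, its conjugate by $\mathrm{diag}(1,\pi)$, and the Iwahori extended by $w$. Each compact $p$-adic analytic group has an open uniform pro-$p$ subgroup that is torsion-free, so finite subgroups have bounded order. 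Conjugacy classes of finite subgroups in a profinite group of this type are then finite in number: a finite subgroup $\Delta$ of order $\le N$ in a compact $p$-adic Lie group $K$ is determined up to conjugacy by a homomorphism class, and the set $\mathrm{Hom}(\Delta_0,K)/K$ is finite, because $H^1$-type rigidity shows nearby homomorphisms are conjugate while $\mathrm{Hom}(\Delta_0,K)$ is compact. Together with finitely many abstract isomorphism types $\Delta_0$ of bounded order, this gives finiteness.

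For $\Sigma$, write $T(k)\simeq k^\times$ via $[\mathrm{diag}(a,d)]\mapsto a/d$; conjugation by $[g]$ acts by inversion, since $g\,\mathrm{diag}(a,d)\,g^{-1}=\mathrm{diag}(d,a)$. A finite subgroup $\Delta\subset\Sigma$ either lies in $T(k)$, where it is a finite subgroup of $k^\times$, hence a subgroup of the finite cyclic group $\mu(k)$, giving finitely many choices; or $\Delta=C\cup C\,t_0[g]$ with $C=\Delta\cap T(k)$ finite cyclic and $t_0\in T(k)$. Conjugating by $s\in T(k)$ replaces $t_0$ by $t_0 s^2$, and multiplying by elements of $C$ changes $t_0$ within $C t_0$. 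Hence the classes are parametrized by $C$ and the image of $t_0$ in $k^\times/k^{\times2}$, which is finite for $p$-adic $k$. One must also check that $(t_0[g])^2=t_0\,t_0^{-1}\cdot[g]^2=1$, so every such coset indeed gives a subgroup. This yields finitely many classes.

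The main obstacle is the $G(k)$ part. I would preferably cite it directly, since this is the case treated in \cite{ARR}, where $G(k[x])$ having (FC) over $p$-adic $k$ was deduced from $G(k)$ having (FC). The $\Sigma$ part is elementary.
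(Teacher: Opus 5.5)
Your proposal is correct and follows the paper's proof. You reduce via Theorem \ref{T:FS1} to $G(k)$, for which the paper simply cites \cite[Proposition 4.3]{ARR}, so your self-contained sketch there is optional; for $\Sigma = T(k)\rtimes\langle [g]\rangle$ you use the finiteness of the roots of unity in $k$ together with $t(s[g])t^{-1} = t^2 s[g]$ to reduce to the finiteness of $k^{\times}/{k^{\times}}^2$, which is exactly the paper's argument.
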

\begin{proof}
It was shown in \cite[Proposition 4.3]{ARR} that the group $G(k)$ does have (FC). So, in view of
Theorem \ref{T:FS1}, it remains to prove that $\Sigma = T(k) \rtimes \langle [g] \rangle$ also has (FC). Since $k$ contains only finitely many roots of unity, the set of finite subgroups of the group $T(k)$ is finite. So, it is enough to show that if we fix one of them, say, $H$, then the set of subgroups $H \subsetneqq S \subset \Sigma$ such that $S \cap T(k) = H$ consists of finitely many conjugacy classes under $T(k)$. Since $[S : H] = 2$, this would follow if we show that $\Sigma \setminus T(k) = T(k) \cdot [g]$ consists of finitely many $T(k)$-conjugacy classes. But for $s , t \in T(k)$, we have
$$
t (s [g] ) t^{-1} = t^2s [g],
$$
so these classes are in bijection with the elements of $T(k)/T(k)^2 = k^{\times}/{k^{\times}}^2$, which is finite.
\end{proof}

To conclude this section, we will discuss similar (although not quite identical) results for $\Gamma_0 = \mathrm{GL}_2(k[x , x^{-1}])$ and $\Gamma_1 = \mathrm{SL}_2(k[x , x^{-1}])$.

\vskip2mm

 $\bullet$ $G_0 = \mathrm{GL}_2$. Here, every finite subgroup of $\Gamma_0 = G_0(k[x , x^{-1}])$ is conjugate to a subgroup of $G_0(k)$. This fact can be derived from Theorem \ref{T:FS1} by the following elementary group-theoretic argument. Let $\pi \colon \Gamma_0 \to \Gamma$ be the canonical homomorphism. By Theorem \ref{T:FS1}, replacing a given finite subgroup $\Delta_0 \subset \Gamma_0$ with a conjugate one, we may assume that $\pi(\Delta_0)$ is contained in either $G(k)$ or $\Sigma$. Then $\Delta_0$  is contained in either $\pi^{-1}(G(k)) = G_0(k) \cdot \langle xI_2 \rangle$ or
 $$
 \pi^{-1}(\Sigma) = \left\{ \left( \begin{array}{cc} a & 0 \\ 0 & b    \end{array} \right) :  a,b \in k^{\times} \cdot \langle x \rangle   \right\} \cdot \left\langle \left(  \begin{array}{cc} 0 & 1 \\ x & 0 \end{array}  \right) \right\rangle =  \left\{ \left( \begin{array}{cc} a & 0 \\ 0 & b    \end{array} \right) :  a,b \in k^{\times} \cdot \langle x \rangle   \right\} \rtimes \langle w \rangle,
 $$
where $w = \left( \begin{array}{cc} 0 & 1 \\ 1 & 0 \end{array} \right)$.  If $\Delta_0 \subset T_0(k(x))$, where $T_0 \subset G_0$ is the diagonal torus, then by order considerations $\Delta_0 \subset T_0(k)$. Otherwise, $H := \Delta_0 \cap T_0(k(x))$ is an index 2 subgroup of $\Delta_0$.
Take any $h \in \Delta_0 \setminus H$. Then $h = t w$, with $t = \mathrm{diag}(a , b)$ and $a , b \in k^{\times} \cdot \langle x \rangle$, and $h^{2n} = I_2$ for some $n \geq 1$.
Since
$h^{2n} = (abI_2)^n$, it follows that $ab \in k^{\times}$. If $a = \alpha x^{\ell}$ ($\alpha \in k^{\times}$), then for $r = \mathrm{diag}(x^{\ell} , 1)$, we have $r^{-1} h r \in G_0(k)$, and then $r^{-1} \Delta_0 r \subset G_0(k)$, as required.

\vskip2mm

$\bullet$ $G_1 = \mathrm{SL}_2$. We will show that every finite subgroup  $\Delta_1 \subset \Gamma_1$ is conjugate to a subgroup of either $G_1(k)$ or $t G_1(k) t^{-1}$, where $t = \mathrm{diag}(1 , x)$. While this statement can be proven by a purely algebraic argument, it is more illuminating in this case to use the action on $\mathcal{X} = \mathcal{T} \times \mathcal{T}^-$. The point is that $\Gamma_1$ acts on both of the trees $\mathcal{T}$ and $\mathcal{T}^-$ without inversions (cf. \cite[Ch. II, \S1.3]{Serre-Trees}). So, we conclude from the Bruhat-Tits Fixed Point Theorem that a given finite subgroup $\Delta_1$ fixes a point $\mathcal{P} = (P , P^-) \in \mathcal{X}$, where both $P$ and $P^-$ are vertices (cf. \cite[Ch. II, \S1.3, Proposition 2]{Serre-Trees}). Furthermore, replacing $\Delta_1$ with a conjugate subgroup, we may assume that $\mathcal{P} \in \Phi$. Then the computations made in Cases 1 and 2 of the proof of Proposition \ref{P:Stab-FD} demonstrate that $\mathrm{Stab}_{\Gamma_1}(\mathcal{P})$ is one of the following: $G_1(k)$, $t G_1(k) t^{-1}$, or $U_1(k) \rtimes T_1(k)$, where $T_1$ is the diagonal torus in $G_1$ and $U_1$ is a vector $k$-group with a $T_1$-action. Again, using \cite[Corollary 2.4]{ARR}, we conclude that if $\Delta_1 \subset U_1(k) \rtimes T_1(k)$, then $\Delta_1$ is conjugate to a subgroup of $T_1(k) \subset G_1(k)$  completing the argument.

\vskip2mm

A direct application of \cite[Proposition 3.3]{ARR} now shows that both $\Gamma_0$ and $\Gamma_1$ have property (FC).

\section{On the kernel of the global-to-local map}\label{S:LG}

We will now
discuss applications of the Raghunathan-Ramanathan Theorem and of our Theorem \ref{T:Main-sep} to the local-global principle. We recall that given a field $K$ equipped with a set $V$ of (rank one) valuations of $K$ and a linear algebraic $K$-group $G$, one defines the corresponding Tate-Shafarevich set  $\Sha(K, V, G)$ as the kernel of the natural map
$$
\lambda_{K, V, G} \colon H^1(K , G) \longrightarrow \prod_{v \in V} H^1(K_v , G).
$$

In this section, we will be mainly concerned with the case of the rational function field $K = k(x)$, where, for the simplicity of the statements, we will assume that the field of constants $k$ is perfect.
Let $V_0$ be the set of the discrete valuations $v = v_{p(x)}$ of $K$ associated with the monic irreducible polynomials $p(x) \in k[x]$ (in geometric terms, this is the set of valuations corresponding to the closed points of the affine line $\mathbb{A}^1_k \subset \mathbb{P}^1_k$).
We then have the following result.
\begin{thm}\label{T:LG1}
For any connected reductive $k$-group $G$, the Tate-Shafarevich set $\Sha(K, V_0, G)$ is trivial.   \end{thm}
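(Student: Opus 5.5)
Take $\xi \in \Sha(K, V_0, G)$. The plan is to show that $\xi$ extends to a torsor over the affine line, invoke Raghunathan--Ramanathan to make it constant, and then kill it by specializing at a rational point.

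First, $\xi$ comes from $\mathbb{A}^1_k$. Represent $\xi$ by a $G$-torsor $E$ over $\mathrm{Spec}\,K$. It spreads out to a $G$-torsor $\mathcal{E}$ over some dense open subset $U = \mathbb{A}^1_k \setminus \{p_1, \dots, p_r\}$, where the $p_i$ are closed points corresponding to valuations $v_i \in V_0$. Since $\xi$ is trivial in $H^1(K_{v_i}, G)$, the restriction of $\mathcal{E}$ to the punctured formal disc $\mathrm{Spec}\,\widehat{K}_{v_i}$ is trivial. We glue $\mathcal{E}$ with the trivial torsor over $\mathrm{Spec}\,\widehat{\mathcal{O}}_{v_i}$ using Beauville--Laszlo patching, or equivalently the double coset description of torsors on a Dedekind scheme. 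This gives a $G$-torsor $\widetilde{\mathcal{E}}$ over all of $\mathbb{A}^1_k$ whose generic fiber is $\xi$. In Galois-cohomological terms, this amounts to showing that $\xi$ lies in the image of $H^1_{\text{\'et}}(k[x], G) \to H^1(K, G)$. Here one uses that the local triviality exactly lifts the class at each missing point, and Nisnevich's injectivity \cite{Nisn} guarantees that the lift is consistent. This is the step that needs the most care, since we want to avoid general algebraic geometry; we will state it as a standard patching lemma.

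Second, we apply Raghunathan--Ramanathan. Because $k$ is perfect, $k^{\mathrm{sep}} = \bar{k}$. The torsor $\widetilde{\mathcal{E}}$ becomes trivial over $\bar{k}[x]$: it is generically trivial over $\bar{k}(x)$, since that field has cohomological dimension $1$ and $G$ is connected reductive (Steinberg), and then by the theorem \cite{RR} over $\bar{k}$, any rationally trivial torsor on $\mathbb{A}^1_{\bar{k}}$ is trivial. Hence $\widetilde{\mathcal{E}}$ is classified by $H^1(\bar{k}/k, G(\bar{k}[x]))$. The Raghunathan--Ramanathan theorem \cite{RR} (see also \cite{ARR}) then says that this class comes from $H^1(k, G)$. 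So $\xi$ is the image of a constant class $\xi_0 \in H^1(k, G)$.

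Third, we kill $\xi_0$. It remains to show that $\xi_0$ is trivial. Take the valuation $v = v_x \in V_0$, whose residue field is $k$. The class $\xi_0$ maps to $H^1(k((x)), G)$ through $H^1(k[[x]], G)$. Since $\xi \in \Sha$, this image is trivial. By Nisnevich's injectivity $H^1(k[[x]], G) \hookrightarrow H^1(k((x)), G)$, the class of $\xi_0$ in $H^1(k[[x]], G)$ is trivial. Specializing at $x \mapsto 0$ gives back $\xi_0$ in $H^1(k, G)$, so $\xi_0 = 1$. This is the same argument as at the end of the proof of Lemma \ref{L:TriK}. Therefore $\xi$ is trivial.

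I expect the main obstacle to be the first step, the extension over the finitely many bad points. The third step, and the second given \cite{RR}, are formal.
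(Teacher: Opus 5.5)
Your proof is correct, and its skeleton is the paper's. You land in $H^1(\ell/k, G(\ell[x]))$, that is, you get a torsor on $\mathbb{A}^1_k$ trivialized over $\ell[x]$. You make it constant by Raghunathan--Ramanathan. You then kill the constant class at $v_x$ using Nisnevich and the specialization $x \mapsto 0$; your third step is essentially the end of the paper's proof.

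The difference is in how you reach $\mathbb{A}^1$. The paper works with cocycles throughout. Lemma~\ref{L:LG2} uses Nisnevich at almost all places to show that a class in the kernel dies in $H^1(L/K, G(\mathbb{A}(L, V_0^L)))$. Lemma~\ref{L:LG1} (strong approximation plus $\ell[x]$ being a UFD, for $\ell$ splitting $G$) then moves the cocycle into $G(L) \cap G(\mathbb{A}^{\infty}(L, V_0^L)) = G(\ell[x])$ in one line. You instead spread out and patch at the finitely many bad points. Nisnevich plays no role in your patching: any trivialization over $K_{v_i}$ can be used for the gluing, and spreading out handles almost all places for free. So your remark that Nisnevich makes the lift ``consistent'' can be dropped.

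The price is paid in your second step. You claim that a rationally trivial $G$-torsor on $\mathbb{A}^1_{\bar{k}}$ is trivial. This does not follow from the Galois-cohomological Raghunathan--Ramanathan statement used here, which over $\bar{k}$ says nothing. It is exactly the class-number-one content of Lemma~\ref{L:LG1} with $\ell = \bar{k}$. Local triviality over the completed local rings is automatic there, since the residue fields are algebraically closed. You should cite or prove it as such.

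So the two routes use the same ingredients, redistributed. Yours is more conceptual but relies on Beauville--Laszlo-type gluing of torsors, which the paper deliberately avoids. The paper's adelic cocycle computation is also the form it reuses in the proof of Theorem~\ref{T:LG2}.
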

\begin{proof}
Our argument will rely
on adelic considerations, which have been used systematically in the analysis of local-global phenomena over arbitrary fields since \cite{CRR-Isr}, \cite{RR-NT}, and \cite{RR-MRL},
and we refer the reader to these papers for a detailed description of the set-up.
In particular, for a field $K$ equipped with a set of valuations $V$, we will denote by
$\mathbb{A}(K , V)$ the corresponding
adele ring of $K$, i.e.
the restricted product of the completions $K_v$ for $v \in V$ with respect to the valuation rings $\mathcal{O}_v \subset K_v$ (we will only need the case where all valuations in $V$ are nonarchimedean).  Assuming that $V$ satisfies the following condition
\vskip2mm

\noindent (A) \ for any  $a \in K^{\times}$, the set $V(a) := \{ v \in V \, \vert \, v(a) \neq 0 \}$ is finite,

\vskip2mm
\noindent we will identify $K$ with the subring of principal adeles in $\mathbb{A}(K , V)$, and let
$$
\mathbb{A}^{\infty}(K , V) = \prod_{v \in V} \mathcal{O}_v
$$
denote the subring of integral adeles. Now let $L/K$ be a finite Galois extension, and let $V^L$ denote the set of extensions of all valuations $v \in V$ to $L$. Then the canonical embeddings $\mathbb{A}(K, V) \to \mathbb{A}(L , V^L)$ and $L \to \mathbb{A}(L, V^L)$ give rise to an isomorphism
$$
\mathbb{A}(K, V) \otimes_K L \simeq \mathbb{A}(L , V^L),
$$
making $\mathbb{A}(L, V^L)$ into a $\mathrm{Gal}(L/K)$-module.

Henceforth, we let $K = k(t)$. If $\bar{k}$ is an algebraic closure of $k$, then the compositum $\bar{k}K = \bar{k}(x)$ has cohomological dimension $\leq 1$ by Tsen's Theorem (see \cite[Proposition 6.2.3 and Theorem 6.2.8]{GS}), so using Steinberg's Theorem (cf. \cite[Ch. III, \S2.3]{Serre-GC} ), we conclude that $H^1(\bar{k}K , G) = 1$. As $k$ is perfect, the extension $\bar{k}K/K$ is Galois, and the Inflation-Restriction Sequence yields the equality $H^1(K, G) = H^1(\bar{k}K/K , G)$. Thus, it is enough to show that given a finite Galois extension $\ell/k$ (which can be assumed to split $G$), we have $H^1(\ell K/K , G) \cap \ker \lambda_{K, V_0, G} = 1$.

Fix a finite Galois extension $\ell/k$ that splits $G$
and set $L = \ell K$. We note the following.
\begin{lemma}\label{L:LG1}
$G(\mathbb{A}(L , V_0^L)) = G(\mathbb{A}^{\infty}(L , V_0^L)) G(L)$.
\end{lemma}
\begin{proof}
(cf. \cite[\S 4]{CRR-Isr}) Let $T$ be a maximal $\ell$-split torus in $G$. It was shown in \cite{CRR-Isr} using strong approximation that
$$
G(\mathbb{A}(L, V_0^L)) = G(\mathbb{A}^{\infty}(L , V_0^L)) T(\mathbb{A}(L, V_0^L)) G(L).
$$
On the other hand, $T \simeq (\mathbb{G}_m)^{\dim G}$ (over $\ell$), and for the multiplicative group $\mathbb{G}_m$, we have
$$
\mathbb{A}(L , V_0^L)^{\times} = \mathbb{A}^{\infty}(L , V_0^L)^{\times} \cdot L^{\times}
$$
as $\ell[x]$ is a unique factorization domain (cf. \cite[Lemma 2.2]{CRR-Spinor}). So, $T(\mathbb{A}(L , V_0^L)) = T(\mathbb{A}^{\infty}(L, V_0^L)) T(L)$, and the required fact follows.
\end{proof}

Next, we have

\begin{lemma}\label{L:LG2}
$H^1(L/K , G) \cap \ker \lambda_{K, V_0, G} = \ker\bigl( H^1(L/K , G) \to H^1(L/K , G(\mathbb{A}(L, V_0^L))   \bigr)$.
\end{lemma}
\begin{proof}
The inclusion $\supset$ is obvious. For the opposite inclusion, we need to observe that, being defined over $k$, the group $G$ has good/reductive reduction at all $v \in V_0$, and, moreover, the extension $L = \ell K$ is unramifed at all $v \in V_0$. So, it follows from a result of Nisnevich \cite{Nisn} (see also \cite{Guo}) that for any $v \in V_0$ and $w \vert v$, the map
$$
\iota_w \colon H^1(L_w/K_v , G(\mathcal{O}_{L_w})) \to H^1(L_w/K_v , G(L_w)),
$$
where  $\mathcal{O}_{L_w}$ is the valuation ring in $L_w$, has trivial kernel. Let $\zeta \in Z^1(L/K , G)$ be a 1-cocycle whose cohomology class defines an element of $H^1(L/K , G) \cap \ker \lambda_{K, V_0, G}$. The values of $\zeta$ lie in $G(\mathcal{O}_{L_w})$ for almost all $v \in V_0$ and $w \vert v$. It follows from the fact that  $\ker \iota_w$ is trivial that for those $v$, the image of the cohomology class of $\zeta$ in $H^1(L_w/K_v , G(\mathcal{O}_{L_w}))$ is trivial. This implies that the image of this class in $H^1(L/K , G(\mathbb{A}(L , V_0^L)))$ is trivial, as required.
\end{proof}

We will now complete
the proof of Theorem \ref{T:LG1}. Let $\zeta \in Z^1(L/K , G)$ be a cocycle such that the corresponding cohomology class lies in $H^1(L/K , G) \cap \ker \lambda_{K, V_0, G}$. It follows from Lemmas \ref{L:LG1} and \ref{L:LG2} that there exist $a \in G(\mathbb{A}^{\infty}(L , V_0^L))$ and $b \in G(L)$ such that
$$
\zeta(\sigma) = (ab)^{-1} \sigma(ab) \ \ \text{for all} \ \ \sigma \in \mathrm{Gal}(L/K).
$$
Then for any $\sigma \in \mathrm{Gal}(L/K)$, we have
$$
b\zeta(\sigma) \sigma(b)^{-1} = a^{-1} \sigma(a) \in G(L) \cap G(\mathbb{A}^{\infty}(L , V_0^L)) = G(\ell[x]).
$$
Thus, the cohomology class of $\zeta$ lies in the image of the map $H^1(\ell/k , G(\ell[x])) \to H^1(L/K , G(L))$. However, $H^1(\ell/k , G(\ell[x])) = H^1(\ell/k , G(\ell))$ by the Raghunathan-Ramanathan Theorem (cf. \cite{ARR}, \cite{RR}). So, without loss of generality, we may assume that $\zeta \in Z^1(\ell/k , G(\ell))$. Furthermore, the cohomology class of $\zeta$ lies in
$$
\ker\bigl(H^1(\ell/k , G(\ell)) \to H^1(\ell/k , G(\ell((x))))\bigr).
$$
Since the map $H^1(\ell/k , G(\ell[[x]])) \to H^1(\ell/k , G(\ell((x)))$ has trivial kernel (as noted above), we conclude that this class actually lies in
$$
\ker\bigl(H^1(\ell/k , G(\ell)) \to H^1(\ell/k , G(\ell[[x]])) \bigr).
$$
But the specialization $\ell[[x]] \to \ell$, $x \mapsto 0$, defines a map $H^1(\ell/k , G(\ell[[x]])) \to H^1(\ell/k , G(\ell))$ such that the composition
$$
H^1(\ell/k , G(\ell)) \to H^1(\ell/k , G(\ell[[x]])) \to H^1(\ell/k , G(\ell))
$$
is the identity map. It follows that the cohomology class of $\zeta$ is trivial, proving the theorem.
\end{proof}

\begin{remark}
In general, the fact that the global-to-local map in Galois cohomology has trivial kernel may not imply that it is actually injective, i.e. that the local-global principle holds. However, the triviality of the kernel does imply the injectivity in certain situations where a suitable group structure is present, e.g. when dealing with the Brauer group.
\end{remark}

It should be pointed out that the properties of the local-to-global map change if we replace $V_0$ with $V_1 := V_0 \setminus \{ v_x \}$, even for $G = \mathrm{PGL}_2$. Indeed, let $k$ be a {\it finite} field. By global class field theory, there exists a unique (up to isomorphism) central quaternion algebra $D$ over $K = k(x)$ that ramifies at $v_x$ and $v_{x^{-1}} = v_{\infty}$ (cf. \cite[\S6.5]{GS}). Then the (nontrivial) element $\zeta \in H^1(K , G)$ that corresponds to the isomorphism class of $D$ lies in $\ker \lambda_{K, V_1, G}$, making the kernel nontrivial. A similar, but even simpler, example can be constructed over the infinite field $k = \mathbb{C}((t))$: it suffices to consider the quaternion algebra $\displaystyle \left( \frac{t , x}{K}  \right)$, where $K = k(x)$. Nevertheless, we have the following.

\begin{thm}\label{T:LG2}
Let $k$ be either a $p$-adic field or a finitely generated field of characteristic zero. Then, in the above notations, for $G = \mathrm{PGL}_2$, the kernel $\ker \lambda_{K, V_1, G}$ is trivial.   \end{thm}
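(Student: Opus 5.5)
Since every class in $H^1(K,G)$ is split by $L=\ell K$ for some finite Galois $\ell/k$, I will follow the proof of Theorem \ref{T:LG1}, with Theorem \ref{T:Main-sep} in place of Raghunathan--Ramanathan. (Tsen and Steinberg give $H^1(K,G)=H^1(\bar k K/K,G)$; $k$ has characteristic zero, hence is perfect.)

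\emph{Reduction to Laurent polynomials.} Fix $\zeta\in Z^1(L/K,G)$ whose class lies in $\ker\lambda_{K,V_1,G}$. Every $v\in V_1$ is unramified in $L$, so the argument of Lemma \ref{L:LG2} (Nisnevich) shows that the class dies in $H^1(L/K,G(\mathbb{A}(L,V_1^L)))$. The analogue of Lemma \ref{L:LG1} for $V_1$ should hold: $\ell[x,x^{-1}]$ is a UFD, and strong approximation for $\mathrm{SL}_2$ with respect to $V_1$ still holds because the complement of $V_1$ contains $v_\infty$. Hence
$$
G(\mathbb{A}(L,V_1^L))=G(\mathbb{A}^\infty(L,V_1^L))\,G(L).
$$
As in the proof of Theorem \ref{T:LG1}, after twisting we may take $\zeta$ with values in $G(L)\cap G(\mathbb{A}^\infty(L,V_1^L))=G(\ell[x,x^{-1}])$. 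So the class comes from $H^1(\ell/k,\Gamma_\ell)$, and its image in $H^1(K,G)$ is trivial at every $v\in V_1$.

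\emph{Applying Theorem \ref{T:Main-sep}.} By Theorem \ref{T:Main-sep} (or Theorem \ref{T:Main-quadratic}), the image $\theta\in H^1(\ell((x))/k((x)),G)$ of $\zeta$ determines the class of $\zeta$ in the direct limit over $\ell$. It is therefore enough to show that $\theta$ is trivial, i.e.\ that the quaternion algebra $D$ over $K$ defined by $\zeta$ splits at $v_x$. By the description in \S\ref{S:Proof1.2}, $D$ is of one of two forms. Either $D=D_0\otimes_k K$ with $D_0$ a quaternion $k$-algebra (the classes in $\mathcal{H}_0^*$), or $D=(\ell K/K,ax)$ with $a\in k^\times$ (the classes in $\mathcal{H}_1^*$). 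Passing to the direct limit over $\ell$ and using Theorem \ref{T:Main-sep} should show that the class of $\zeta$ is one of these two forms already over $K$.

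\emph{Constant type.} If $D=D_0\otimes_k K$, choose a monic irreducible $p(x)\ne x$ of degree $1$, say $p=x-c$ with $c\neq 0$; its residue field is $k$. Splitting at $v_p$ forces $D_0\otimes_k k((x-c))$ to split, and by Hensel's lemma this forces $D_0$ itself to split. Hence $D$ is trivial.

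\emph{Ramified type (main obstacle).} If $D=(\ell K/K,ax)$ with $\ell=k(\sqrt d)$ quadratic, then $D$ is the quaternion algebra $(d,ax)$. At a place $p\neq x$ with residue field $\kappa_p$, its residue is $d$ raised to the power $v_p(ax)=0$, so there is no ramification. Thus $D$ is unramified at every $v\in V_1$, and local triviality at $p$ amounts to $d$ being a square or $ax$ being a norm in the completion. Since $v_p(ax)=0$, the latter reduces to $a\cdot\bar x$ being a norm from $\kappa_p(\sqrt d)$ to $\kappa_p$. I will choose $p$ with $d\notin\kappa_p^{\times2}$ and $a\bar x\notin N(\kappa_p(\sqrt d)^\times)$, i.e.\ a residue field $\kappa_p=k(\alpha)$, with $\alpha$ the image of $x$, such that the quaternion algebra $(d,a\alpha)$ is nonsplit over $\kappa_p$. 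This gives a place in $V_1$ where $D$ is locally nontrivial, contradicting the hypothesis.

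Here the arithmetic of $k$ is used, and this is the step I expect to be hardest. For $k$ $p$-adic, I would take $\alpha\in k$ itself (a degree-one place $x-\alpha$, $\alpha\ne0$) with $(d,a\alpha)_k=-1$; such $\alpha$ exists because the Hilbert symbol is nondegenerate and $d$ is not a square. For $k$ finitely generated of characteristic zero, I would again look for $\alpha\in k^\times$ with $(d,a\alpha)$ nonsplit over $k$. I would try to get such an $\alpha$ by choosing a discrete valuation $w$ of $k$ that is inert in $k(\sqrt d)$ (this exists by a Chebotarev/Hilbert irreducibility argument) and taking $\alpha$ with $w(a\alpha)$ odd. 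Then $(d,a\alpha)$ is ramified at $w$ and hence nonsplit.
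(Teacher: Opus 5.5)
Your proof is correct, and apart from one step it is the paper's proof. The $V_1$-versions of Lemmas~\ref{L:LG1} and~\ref{L:LG2} put the class in the image of $H^1(\ell/k,\Gamma_\ell)$. Theorem~\ref{T:Main-sep}, together with the fact that every local class is split by $\ell((x))$ for some quadratic $\ell/k$, lets you assume $\ell$ is quadratic. That is what your ``should show'' sentence needs, and it is exactly how the paper argues. Classes in $\mathcal{H}_0^*$ are killed at a degree-one place $x-c$ with $c\neq 0$. A nontrivial class in $\mathcal{H}_1^*$ gives $(\ell K/K,ax)$, whose specialization at $x-\alpha$ is $(\ell/k,a\alpha)$. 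So everything comes down to $N_{\ell/k}(\ell^\times)\neq k^\times$, i.e.\ Lemma~\ref{L:Norms}. Your remark that it suffices to prove $\theta$ trivial is never used: you get global triviality, or a contradiction, directly.

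The genuine difference is in Lemma~\ref{L:Norms} for finitely generated $k$. The paper embeds $k$ into $\mathbb{Q}_p$ so that $\ell\mathbb{Q}_p$ is quadratic (Prasad--Rapinchuk). It then uses density of $k^\times$ and openness of the local norm group to reduce to local class field theory, and this works whether or not $\ell\mathbb{Q}_p/\mathbb{Q}_p$ is ramified.

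You instead pick a discrete valuation $w$ of $k$, of residue characteristic $\neq 2$, inert in $\ell=k(\sqrt d)$, and an $\alpha$ with $w(a\alpha)$ odd, so that $(d,a\alpha)$ is ramified at $w$. This is more intrinsic and avoids the topological argument, but it moves all the arithmetic into the existence of $w$, which you only gesture at. Such a $w$ does exist:
\begin{itemize}
\item for a number field, by Chebotarev, discarding the primes over $2$;
\item for positive transcendence degree, write $k$ as a finite extension of $F(t)$ with $F$ finitely generated, hence Hilbertian. Use Hilbert irreducibility to choose $t\mapsto c\in F$ at which the Galois closure of $\ell/F(t)$ is unramified with full decomposition group. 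The place of $k$ below it is then inert in $\ell$ and has residue characteristic $0$.
\end{itemize}
With that step supplied, your argument is a complete alternative to the paper's.
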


(The assertion remains valid over finitely generated fields of characteristic > 2 having transcendence degree > 0 over the prime subfield.)

\begin{proof}
While the overarching structure of the argument is similar to the proof of Theorem \ref{T:LG1}, we will  use along the way various specific results for the situation at hand established in \S\S 4-5. First, it is enough to show that for any finite Galois extension  $\ell/k$, the intersection
$$
X(\ell) := H^1(\ell K / K , G) \cap \ker \lambda_{K, V_1, G}
$$
is trivial. Since $G$ is $k$-split and $\ell[x , x^{-1}]$ is a unique factorization domain, the argument in the proof of Theorem \ref{T:LG1} demonstrates that $X(\ell)$ is contained in
$$
Y(\ell) :=  \im \bigl( H^1(\ell/k , \Gamma_{\ell}) \to H^1(\ell K / K , G) \bigr),
$$
where $\Gamma_{\ell} = G(\ell[x , x^{-1}])$.

Since every element of $H^1(k((x))^{\mathrm{ur}}/k((x)) , G(k((x))^{\mathrm{ur}}))$ becomes trivial
over $\ell((x))$ for some quadratic extension $\ell/k$, it follows from Theorem \ref{T:Main-sep} that every element of $H^1(\bar{k}/k , G(\bar{k}[x , x^{-1}]))$ becomes trivial over such $\ell$ --- we recall that this
was actually part of the argument in \S 5. This means that it is enough to prove that for every \emph{quadratic} extension $\ell/k$, the intersection
\begin{equation}\label{E:Intersec1}
Y(\ell) \cap \ker \lambda_{K, V_1, G}
\end{equation}
is trivial.

For the rest of the argument, let us fix a quadratic extension $\ell/k$. We have seen in subsection 4.3 that $H^1(\ell/k , \Gamma_{\ell})$ is the union of the sets $\mathcal{H}_0^*$ and $\mathcal{H}_1^*$ introduced there. We recall that $\mathcal{H}_0^*$ is the image of the map
$H^1(\ell/k , G) \to H^1(\ell/k , \Gamma_{\ell})$. Now, if $\zeta \in H^1(\ell/k , G)$ defines an element that lies in the intersection (\ref{E:Intersec1}), then in particular, $\zeta$ lies in the kernel of the map $H^1(\ell/k , G) \to H^1(\ell(x-1))/k((x-1)) , G)$, where $k((x-1))$ is understood as the completion of $K = k(x)$ with respect to the valuation $v_{(x-1)}$ associated with $x-1$. Then the same argument as in the proof of Theorem \ref{T:LG1} yields that $\zeta = 1$.

Next, by definition $\mathcal{H}_1^* = \im\bigl( H^1(\ell/k , T(\ell) \rtimes \langle g \rangle)  \to H^1(\ell/k , \Gamma_{\ell})\bigr)$, where $T$ is the image in $G$ of the diagonal torus in $\mathrm{GL}_2$ and $g = \left[  \begin{array}{cc} 0 & 1 \\ x & 0   \end{array} \right]$. We have seen in subsection 4.3 that every $\zeta \in H^1(\ell((x))/k((x)) , G)$ that comes from $\mathcal{H}_1^* \setminus \{ 1 \}$ corresponds to a quaternions algebra of the form $D = (\ell(x)/k(x) , ax)$ for some $a \in k^{\times}$. Such $D$ is unramified at $v_{(x - \alpha)}$ for any $\alpha \in k^{\times}$, with the residue given by the quaternion algebra $(\ell/k , a\alpha)$. So, if $\zeta \in \ker \lambda_{K, V_1, G}$, then the algebra $(\ell/k , a\alpha)$ must be trivial for all $\alpha \in k^{\times}$, implying that $N_{\ell/k}(\ell^{\times}) = k^{\times}$ (cf. \cite[Proposition 1.1.7]{GS}). Thus, it remains to prove the following.
\begin{lemma}\label{L:Norms}
 Let $k$ be either a $p$-adic field or a finitely generated field of characteristic zero. Then for any quadratic extension $\ell/k$, we have $N_{\ell/k}(\ell^{\times}) \neq k^{\times}$.
\end{lemma}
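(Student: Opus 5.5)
We need to show that for any quadratic extension $\ell/k$ the norm group $N_{\ell/k}(\ell^{\times})$ is a proper subgroup of $k^{\times}$. Since $\mathrm{char}\, k \neq 2$, write $\ell = k(\sqrt{d})$ with $d \in k^{\times} \setminus {k^{\times}}^2$. It then suffices to find a completion $k_w$ of $k$ with respect to some valuation (discrete or archimedean) such that $d \notin {k_w^{\times}}^2$. Indeed, in that case $\ell_w := \ell \otimes_k k_w$ is a quadratic field extension of the local field $k_w$, and local class field theory (or the classical theory for $\mathbb{R}$) gives $[k_w^{\times} : N(\ell_w^{\times})] = 2$. One then picks $a \in k^{\times}$ with $a \notin N(\ell_w^{\times})$; since $N_{\ell/k}(\ell^{\times}) \subset N(\ell_w^{\times})$, this $a$ is not a norm from $\ell$. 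The density of $k^{\times}$ in $k_w^{\times}$ together with the openness of the norm subgroup ensures that such an $a$ exists.

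For $k$ a $p$-adic field we simply take $w$ to be the given valuation, so that $k_w = k$, and the argument applies directly. Equivalently, the quaternion algebra $(\ell/k, \pi)$ or $(\ell/k, u)$ is nonsplit for a suitable choice, which by \cite[Proposition 1.1.7]{GS} means exactly that $\pi$ or $u$ is not a norm.

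For $k$ finitely generated of characteristic zero, the main task is to produce a valuation at which $d$ is not a square. This is the step I expect to be the main obstacle. The plan is to write $k$ as a finite extension of $\mathbb{Q}(t_1, \ldots, t_r)$ and argue by induction on $r$.

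1. For $r = 0$, $k$ is a number field. By Chebotarev (or already by the fact that a nonsquare in a number field remains a nonsquare in infinitely many completions, since $k(\sqrt{d})/k$ has infinitely many inert primes), there is a finite place $w$ with $d \notin {k_w^{\times}}^2$, and we are done.
2. For $r \geq 1$, view $k$ as the function field of a normal curve $C$ over a field $k_0$ that is finitely generated of transcendence degree $r-1$; we may enlarge $k_0$ to its algebraic closure in $k$. If $\ell/k$ is ramified at some discrete valuation trivial on $k_0$, then $d$ has odd valuation there and is not a local square, which finishes the argument.
3. Otherwise, $\ell/k$ is unramified everywhere on $C$. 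We then choose a closed point $P$ of $C$ at which $d$ is a unit and specialize. If $\ell/k$ is a constant extension $k_0(\sqrt{d_0})\,k$, the residue field $k_0(P)$ can be chosen so that $d_0$ remains a nonsquare in it; this holds for points of degree prime to $2$ in a Hilbert-irreducibility argument, using that $k_0$ is Hilbertian. In the general case Hilbert irreducibility over the Hilbertian field $k_0$ produces a point $P$ whose residue field $k_0(P)$ is finitely generated of lower transcendence degree and in which the residue $\bar d$ is a nonsquare.
4. By induction, $k_0(P)$ has an element $\bar a$ that is not a norm from $k_0(P)(\sqrt{\bar d})$. Lifting $\bar a$ to a unit $a \in k$ at $P$, the residue map for quaternion algebras shows that $(\ell/k, a)$ does not split at $P$, since it is unramified there with nonsplit specialization. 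Hence $a \notin N_{\ell/k}(\ell^{\times})$.

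The delicate point in this induction is making the Hilbert irreducibility step precise, namely choosing $P$ so that the nonsquare $d$ stays a nonsquare in the residue field. I would handle it by applying Hilbert's irreducibility theorem to the polynomial $T^2 - d$ over $k_0(t)$ for a separating transcendental $t$.
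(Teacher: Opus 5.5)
Your strategy works and takes a genuinely different route from the paper's, but a few steps need repair.

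\textbf{Comparison with the paper.} The paper handles the finitely generated case in one stroke. It invokes \cite[Proposition 2.1]{PrR} to embed $k$ into $\mathbb{Q}_p$ so that $\ell\mathbb{Q}_p$ is still a quadratic field. It then runs exactly your first-paragraph reduction: $k^{\times}$ is dense in $\mathbb{Q}_p^{\times}$, the local norm group is open, and its index is $2$ by local class field theory. So the paper uses a rank-one valuation of $k$ whose completion is an honest $p$-adic field. You instead descend through geometric places of $k/k_0$ by induction on transcendence degree, using Hilbert irreducibility and a residue argument, and meet a $p$-adic field only at the number-field base. Your route avoids the embedding theorem of \cite{PrR}. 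The paper's route is much shorter, and the infinitude of suitable primes also gives the infinite index noted in the subsequent remark.

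\textbf{Step 2 and the opening reduction.} Your opening reduction is valid only when $k_w$ has finite residue field (or $k_w=\mathbb{R}$). At the places used in steps 2--4, $k_w\cong k_0(P)((\pi))$ has infinite residue field, and local class field theory does not apply. So ``which finishes the argument'' in step 2 is unjustified as written. It can be patched: if $v_P(d)$ is odd and $a\in\mathcal{O}_P^{\times}$ has $\bar a\notin k_0(P)^{\times 2}$, then $a=x^2-dy^2$ forces $v_P(x)=0$ and $\bar a=\bar x^2$. Such an $a$ exists because a finitely generated field of characteristic zero is not quadratically closed. It is simpler, though, to drop step 2 entirely. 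Drop also the remark about points of odd degree: $C$ need not have any, e.g.\ a conic without rational points.

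\textbf{The Hilbert irreducibility step.} Apply it not to $T^2-d$, which is not defined over $k_0(t)$, but to a primitive element $\theta$ of $\ell$ over $k_0(t)$, integral over $k_0[t]$. Let $f(t,X)$ be its minimal polynomial, of degree $2m$ where $m=[k:k_0(t)]$. Choose $t_0\in k_0$ with $f(t_0,X)$ irreducible and $d$ a unit at the places above $t_0$.
\begin{enumerate}
\item Since $\bar\theta$ is a root of $f(t_0,X)$, there is a unique place $Q$ of $\ell$ over $t=t_0$, with residue degree $2m$.
\item The place $P$ of $k$ below $Q$ then has residue degree $m$, and $Q/P$ is inert. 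Equivalently, $\bar d\notin k_0(P)^{\times 2}$.
\end{enumerate}
This works regardless of whether $\ell/k$ is ramified somewhere or is a constant extension.

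\textbf{Conclusion.} With $\bar d$ a nonsquare you get $v_P(x^2-dy^2)=2\min(v_P(x),v_P(y))$. Your step 4 then follows directly from the induction hypothesis applied to $k_0(P)$.
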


Indeed, if $k$ is a $p$-adic field, then $[k^{\times} : N_{\ell/k}(\ell^{\times})] = 2$ by local class field theory (cf. \cite[Ch. XIII, \S4]{Serre-LF}). Now suppose that $k$ is finitely generated. Then according to \cite[Proposition 2.1]{PrR}, for some prime $p$, there exists an embedding $k \hookrightarrow \mathcal{K} : = \mathbb{Q}_p$ such that $\mathcal{L} := \ell \mathcal{K}$ is a quadratic extension of $\mathcal{K}$. Since $k^{\times}$ is dense in $\mathcal{K}^{\times}$ in the $p$-adic topology, the equality $N_{\ell/k}(\ell^{\times}) = k^{\times}$ would imply that $N_{\mathcal{L}/\mathcal{K}}(\mathcal{L}^{\times}) = \mathcal{K}^{\times}$ as the norm subgroup $N_{\mathcal{L}/\mathcal{K}}(\mathcal{L}^{\times})$ is open, hence closed. But this contradicts the first part of the argument.
\end{proof}

\begin{remark}
By \cite[Proposition 2.1]{PrR}, for a given finitely generated field $k$, the set of primes with the properties used in the above argument is infinite. So, using weak approximation, one proves that in this case, the index $[k^{\times} : N_{\ell/k}(\ell^{\times})]$ is infinite.
\end{remark}

\vskip4mm

\noindent {\bf Acknowledgements.} Igor Rapinchuk and Avinash Roy were partially supported by NSF grant DMS-2154408 during the preparation of this paper.

\bibliographystyle{amsplain}

\end{document}